\documentclass[12pt]{amsart}
\usepackage{amsfonts,amsmath,amssymb,amsthm}
\usepackage{latexsym}
\usepackage{mathrsfs}
\usepackage{tikz}
\usepackage{color}

\newtheorem{lem}{Lemma}[section]
\newtheorem{prop}{Proposition}[section]
\newtheorem{thm}{Theorem}[section]
\newtheorem{rem}{Remark}

\newtheorem{definition}{Definition}[section]
\newtheorem{conj}{Conjecture}[section]

\newcommand{\un}{\mathbb{I}}

\newcommand{\cA}{\mathcal{A}}
\newcommand{\cB}{\mathcal{B}}

\marginparwidth 0pt
\oddsidemargin  -10pt
\evensidemargin  -10pt
\marginparsep 0pt
\topmargin   -40pt
\textwidth   7in
\textheight  8.7 in

\newcommand{\diag}[3]{ \foreach \t in {1,...,#3} {\draw[thick] (#1+\t,#2-1) rectangle (#1+\t-1,#2);} }

\begin{document}

\title[Racah algebra and centralizers]{Temperley--Lieb, Brauer and Racah algebras and other centralizers of $\mathfrak{su}(2)$}
%\dedicatory{}

%\title{Centralizers of $\mathfrak{su}(2)$ and the Racah, Temperley--Lieb, and Brauer algebras}

\author[N.Cramp\'e]{Nicolas Cramp\'e$^{\dagger,*}$}
\address{$^\dagger$ Institut Denis-Poisson CNRS/UMR 7013 - Universit\'e de Tours - Universit\'e d'Orl\'eans, 
Parc de Grammont, 37200 Tours, France.}
\email{crampe1977@gmail.com}

\author[L.Poulain d'Andecy]{Lo\"ic Poulain d'Andecy$^{\ddagger}$}
\address{$^\ddagger$ Laboratoire de math\'ematiques de Reims FRE 2011, Universit\'e de Reims Champagne-Ardenne, UFR Sciences exactes et naturelles,
Moulin de la Housse BP 1039, 51100 Reims, France. }
\email{loic.poulain-dandecy@univ-reims.fr}

\author[L.Vinet]{Luc Vinet$^{*}$}
\address{$^*$ Centre de recherches math\'ematiques, Universit\'e de Montr\'eal,
P.O. Box 6128, Centre-ville Station,
Montr\'eal (Qu\'ebec), H3C 3J7, Canada.}
\email{vinet@CRM.UMontreal.ca}

\begin{abstract}
In the spirit of the Schur--Weyl duality, we study the connections between the Racah algebra and the centralizers of tensor products of three (possibly different) irreducible representations of $\mathfrak{su}(2)$. As a first step we show that the Racah algebra always surjects onto the centralizer. We then offer a conjecture regarding the description of the kernel of the map, which depends on the irreducible representations. If true, this conjecture would provide a presentation of the centralizer as a quotient of the Racah algebra. We prove this conjecture in several cases. In particular, while doing so, we explicitly obtain the Temperley--Lieb algebra, the Brauer algebra and the one-boundary Temperley--Lieb algebra as quotients of the Racah algebra.
\end{abstract}

\maketitle

%{\small MSC:\ 81R50;\ 81R10;\ 81U15.}
%

%{{\small  {\it \bf Keywords}: }}
%

\vspace{3mm}

\section{Introduction}

The purpose of this paper is to make precise the connections between the Racah algebra, the centralizers of the Lie algebra $\mathfrak{su}(2)$
and some algebras connected to the braid group like the Temperley--Lieb or Brauer algebras.

On the one hand, the Racah algebra has been introduced in \cite{GZ} to study the Racah $W$-coefficients or $6j$-symbols \cite{Rac} 
intervening in the coupling of three angular momenta. These Racah $W$-coefficients are associated specifically to the direct sum 
decomposition of tensor product of three $\mathfrak{su}(2)$ representations. They are known to be expressible in terms of polynomials bearing the same name and 
sitting at the top of the discrete Askey scheme for $q=1$.
The Racah algebra has hence become a central tool to synthesize the (bispectral) properties of hypergeometric polynomials.

On the other hand, the Schur--Weyl duality provides also a way to study the decomposition of a tensor product of representation 
of $\mathfrak{su}(2)$ into a direct sum. Is is based on the study of the centralizer of the action of $\mathfrak{su}(2)$ on the tensor product.
For fundamental representations of $\mathfrak{su}(2)$, this centralizer is a quotient of the permutation group called the Temperley--Lieb algebra \cite{TL}
and for the irreducible representations of dimension $3$, it is the Brauer algebra \cite{Brauer,LZ,LZ2}.

In this paper, we offer a conjecture regarding the description in terms of generators and relations of the centralizer of the action of $\mathfrak{su}(2)$
on the tensor product of any three irreducible representations, characterized by three half-integers or integers, $j_1$, $j_2$ and $j_3$. 
Our starting point is that the Racah algebra surjects onto the centralizer, or in other words, that the ``intermediate'' Casimir elements (see Subsection \ref{sec:IC}) 
generate the whole centralizer of the tensor product of any three irreducible representations.
Then our strategy is to find a quotient of the Racah algebra such that it becomes isomorphic to the centralizer. Proofs will be given in a number of 
special cases to support our conjecture. In particular, the Temperley--Lieb and the Brauer
algebras are recovered through this approach in the cases involving respectively the product of three fundamental representations ($j_1=j_2=j_3=\frac{1}{2}$) 
and of three irreducible representations of dimension $3$
($j_1=j_2=j_3=1$), respectively. These results nicely establish the connection between the Racah, Temperley--Lieb and Brauer algebras.
We also prove our conjecture in the following instances: $j_1=j$, $j_2=j_3=\frac{1}{2}$ ($j=1,\frac{3}{2},\dots$) and $j_1=\frac{1}{2}$, $j_2=j_3=1$. 
For both cases, as far as we know, it is the first time that the centralizer is described explicitly in terms of generators and relations. 
Remarkably, in the first situation, we arrive at a specialization of an algebra previously studied  and known as the one-boundary Temperley-Lieb algebras \cite{MS, MW,NRG}. 
In addition, the conjecture is also verified for $j_1=j_2=j_3=\frac{3}{2}$, which is the first example not involving the spins $\frac{1}{2}$ and $1$. Finally, a proof of the conjecture is provided for $j_1=j$, $j_2=\frac{1}{2}$, $j_3=k$ (for any $j$ and $k$) which includes an infinite family of cases where the three spins are all distinct.

The plan of the paper is as follows. Section \ref{sec:gen} is concerned with the general situation. Relevant results about
the Lie algebra  $\mathfrak{su}(2)$ are recalled in Subsection \ref{sec:irreps} and intermediate Casimirs are discussed in Subsection \ref{sec:IC}.
Subsection \ref{sec:CR} presents the main conjecture which provides details on the connection between the centralizer and the Racah algebra. Subsection \ref{sec:sur} contains the proof 
that the image of the Racah algebra generates all the centralizer (this statement is sometimes called the 
first fundamental theorem of the invariant theory).  
Subsection \ref{sec:S3} is concerned with the study of the action of the permutation group of three elements on the quotiented Racah algebra.
Proofs of the conjecture for different particular cases are offered in the sections that follow. 
Section \ref{sec:TL} focuses on $j_1=j_2=j_3=\frac{1}{2}$ and provides the explicit connection 
between the Racah algebra and the Temperley--Lieb algebra (see Theorem \ref{th:TL}).
Section \ref{sec:brauer} examines the case $j_1=j_2=j_3=1$. In so doing, it is found that the quotiented Racah algebra is isomorphic to the Brauer algebra (see Theorem \ref{th:br}).
In Section \ref{sec:sff}, attention is paid to the case $j_1=1,\frac{3}{2},2,\dots$ and $j_2=j_3=\frac{1}{2}$ and by showing the conjecture to hold, we obtain 
a first description of the corresponding centralizer in terms of generators and relations. 
The case $j_1=\frac{1}{2}$ and $j_2=j_3=1$ is dealt within Section \ref{sec:f11}. We find a simple presentation 
of the centralizer that we propose to call the one-boundary Brauer algebra.
In Section \ref{sec:32}, we outline the proof of the 
conjecture when $j_1=j_2=j_3=\frac{3}{2}$. Finally, in Section \ref{sec:j12k}, the conjecture is proven 
for the case $(j_1,j_2,j_3)=(j,\frac{1}{2},k)$ for any $j$ and $k$.
The paper ends with concluding remarks.

\section{Racah algebra and centralizer for $\mathfrak{su}(2)$ \label{sec:gen}}

\subsection{Lie algebra  $\mathfrak{su}(2)$ and its irreducible representations\label{sec:irreps}}

The Lie algebra $\mathfrak{su}(2)$ is generated by $\mathfrak{s}^\alpha$ for $\alpha=1,2,3$ satisfying
\begin{equation}
 \left[\mathfrak{s}^\alpha\, ,\, \mathfrak{s}^\beta\right]=i\ \epsilon_{\alpha\beta\gamma}\   \mathfrak{s}^\gamma\ ,
\end{equation}
where $\epsilon_{\alpha\beta\gamma}$ is the Levi-Civita tensor. The quadratic Casimir of the universal enveloping algebra $U(\mathfrak{su}(2))$ is given by
\begin{equation}
\mathcal{K}=\sum_{\alpha=1}^3(\mathfrak{s}^\alpha)^2 \ .\label{eq:cas}
\end{equation}
We denote by $[2j]$ ($j=0,\frac{1}{2},1,\frac{3}{2},\dots$) the finite irreducible representation of $\mathfrak{su}(2)$ of dimension $2j+1$.
We recall that the image of the Casimir \eqref{eq:cas} in $\text{End}([2j])$ is $j(j+1)\un_{2j+1}$ where $\un_{2j+1}$ is the $2j+1$ by $2j+1$ identity matrix.
We also use the name spin-$j$ representation to refer to $[2j]$ and represent $[2j]$ by the Young tableau with $2j$ boxes 
 $\underbrace{\begin{tikzpicture}[scale=0.3]
\diag{0}{0}{2}
\end{tikzpicture} \cdots \begin{tikzpicture}[scale=0.3]
\diag{0}{0}{1}
\end{tikzpicture}}_{2j} $.

We choose three half-integers or integers $j_1$, $j_2$ and $j_3$ and define $\mathcal{J}_{ab}=\{  |j_a-j_b|,|j_a-j_b|+1,\dots, j_a+j_b\}$ for $1\leq a<b\leq 3$.
The tensor product of the two fold representation $[2j_a] \otimes  [2j_b]$ is reducible into the following direct sum 
\begin{equation}
 [2j_a] \otimes  [2j_b]= \bigoplus_{j \in \mathcal{J}_{ab}} [2j]\ .\label{eq:tp2}
\end{equation}
Similarly, the direct sum decomposition of the three fold tensor product $[2j_1] \otimes  [2j_2] \otimes  [2j_3]$ is given by
\begin{equation}
 [2j_1] \otimes  [2j_2] \otimes  [2j_3]=\bigoplus_{j \in \mathcal{J}_{123}}\ d_j [2j]\ ,\qquad\text{for}\quad d_j\in\mathbb{Z}_{>0} \quad \text{and}\qquad \mathcal{J}_{123} \subset \{0,\frac{1}{2},1,\dots\}\ . \label{eq:J4}
\end{equation}
To determine the set $\mathcal{J}_{123}$ and the degeneracy $d_j$ it is convenient to draw the Bratteli diagram $\cB(j_1,j_2,j_3)$.
That is a graph made out of three rows of vertices. The top row contains one vertex with the representation $[2j_1]$, 
the middle row contains the representations $[2k]$ with $k \in \mathcal{J}_{12}$ and the bottom row contains the representations $[2\ell]$ 
with $\ell \in \mathcal{J}_{123}$.
Edges are drawn between the vertex of the first row and all those of the second one.
An edge is also drawn between the vertex $[2k]$ of the second row and the vertex $[2\ell]$ of the third row if and only if the representation $[2\ell]$ is in the direct 
sum decomposition of $[2k] \otimes [2j_3]$ (see Figures \ref{fig:TL}-\ref{fig:32} for examples).
In other words,  $\cB(j_1,j_2,j_3)$ is the Bratteli diagram for the inclusion of centralizer algebras
\[\text{End}_{\mathfrak{su}(2)}([2j_1])\subset \text{End}_{\mathfrak{su}(2)}([2j_1]\otimes [2j_2])\subset \text{End}_{\mathfrak{su}(2)}([2j_1]\otimes [2j_2]\otimes [2j_3])\ ,\]
for the natural inclusions $a\mapsto a\otimes 1$.

For a given $\ell\in \mathcal{J}_{123}$, the degeneracy $d_\ell$ in formula \eqref{eq:J4} is the number of edges connected to the
vertex $[2\ell]$ in the third row of the Bratteli diagram $\cB(j_1,j_2,j_3)$. 
The construction of $\cB(j_a,j_b,j_c)$ (for $1\leq a,b,c \leq 3$ two by two different) allows us to define the following set
\begin{equation}
\mathcal{M}_{abc}= \{\ell(\ell+1)-k(k+1) \ | \ ( [2k], [2\ell])\text{ is an edge in }\cB(j_a,j_b,j_c)\text{ from the second to the third row} \}.
\end{equation}
Let us emphasize that there are no repeated numbers in these $\mathcal{M}_{abc}$.
We can also see that $\mathcal{M}_{123}=\mathcal{M}_{213}$. However, the set $\mathcal{M}_{231}$ is in general different of $\mathcal{M}_{123}$ 
(see Section \ref{sec:sff} or \ref{sec:f11} for such examples).

\subsection{Intermediate Casimirs\label{sec:IC}}

Let us fix again three half-integers or integers $j_1$, $j_2$ and $j_3$. We denote by $s^\alpha_a$ 
the image of $\mathfrak{s}^\alpha$ in $\text{End}([2j_a])$. 
For example, for $j_a=\frac{1}{2}$, one gets $s^\alpha_a=\frac{1}{2}\sigma^\alpha$ where $\sigma^\alpha$ are the Pauli matrices. 
Then, we can define an action of $\mathfrak{su}(2)$  in the space $[2j_1] \otimes [2j_2] \otimes [2j_3]$ by 
\begin{equation}
 \mathfrak{s}^\alpha \cdot ( v_1 \otimes v_2 \otimes v_3) =( s^\alpha_1 v_1) \otimes v_2 \otimes v_3 +v_1 \otimes ( s^\alpha_2 v_2) \otimes v_3 + v_1 \otimes v_2 \otimes (s^\alpha_3 v_3)\;,
\end{equation}
where $v_a\in [2j_a]$. By abuse of notation, we shall write in the following $s^\alpha_1$ for $s^\alpha_1 \otimes \un_{2j_2+1} \otimes \un_{2j_3+1}$ with analogous understanding of $s^\alpha_2$ and $s^\alpha_3$.

We define the following Casimirs
\begin{eqnarray}
 &&K_a=\sum_{\alpha=1}^3 ({s}^\alpha_a)^2 \quad\text{for}\quad a =1,2,3_ ,\label{eq:cas1}\\
 &&K_{ab}=\sum_{\alpha=1}^3({s}^\alpha_a+{s}^\alpha_b)^2 \quad\text{for}\quad 1\leq a<b\leq 3\ ,\\
 &&{K}_{123}=\sum_{\alpha=1}^3({s}^\alpha_1+{s}^\alpha_2+{s}^\alpha_3)^2\ .\label{eq:cas3}
\end{eqnarray}
They satisfy the following equality:
\begin{equation}\label{eq:CC}
{K}_1+{K}_2+{K}_3+{K}_{123}={K}_{12}+{K}_{23}+{K}_{13}\;.
\end{equation}
Since $[2j_a]$ is irreducible, one gets $K_a=j_a(j_a+1)\un_{2j_a+1}$. 
The intermediate Casimir $K_{ab}$ is diagonalizable and its eigenvalues are $j(j+1)$ for $j \in \mathcal{J}_{ab}$.
We have
\begin{equation}
 \prod_{j\in \mathcal{J}_{ab}} ({K}_{ab}-j(j+1))=0\ .\label{eq:CP1}
\end{equation}
Similarly, the Casimir $K_{123}$ satisfies
\begin{equation}
 \prod_{j\in \mathcal{J}_{123}} ({K}_{123}-j(j+1))=0\ .\label{eq:CP2}
\end{equation}

Since $K_{12}$ commutes with $K_{123}$, we can diagonalize them in the same basis and it follows that the spectrum
of ${K}_{123}-K_{12} $ is included into the set 
\begin{equation}
\left\{\ell(\ell+1)-k(k+1) \ \Big|\  \ell\in \mathcal{J}_{123}, k\in\mathcal{J}_{12}   \right\}  \ .              
\end{equation}
In fact, we can make a more precise statement. The eigenvalues $\ell(\ell+1)$ for $K_{123}$ and $k(k+1)$ for $K_{12}$ are obtained simultaneously (that is, the eigenspaces have a non-trivial intersection) if and only if the corresponding vertices in the Bratteli diagram $\cB(j_1,j_2,j_3)$ are connected. Therefore, the spectrum of ${K}_{123}-K_{12} $ is the set
\begin{equation}
 \left\{ m\ \Big|\  m\in \mathcal{M}_{123}   \right\}\ .
\end{equation}
Similarly, the spectrum of $K_{123}-K_{23}$ is 
$
 \left\{ m\ \Big|\  m\in \mathcal{M}_{231}   \right\} 
$ and the spectrum of $K_{123}-K_{13}$ is 
$
 \left\{ m\ \Big|\  m\in \mathcal{M}_{132}   \right\} 
$.
From relation \eqref{eq:CC}, 
we see that ${K}_{12}+{K}_{23}={K}_{123}-K_{13}+{K}_1+{K}_2+{K}_3 $ and thus that the spectrum of ${K}_{12}+{K}_{23}$ is 
\begin{equation}
\left\{m+\sum_{a=1}^3 j_a(j_a+1) \ \Big|\ m\in \mathcal{M}_{132}  \right\}  \ .              
\end{equation}
We obtain from these spectra the minimal polynomials of ${K}_{12}+{K}_{23}$, $K_{123}-K_{12}$ and $K_{123}-K_{23}$.

\subsection{Centralizer and Racah algebra \label{sec:CR}}

The centralizer ${\mathcal{C}}_{j_1j_2j_3}$ of the image of $\mathfrak{su}(2)$ in $\text{End}([2j_1] \otimes [2j_2] \otimes [2j_3])$ is defined by
\begin{eqnarray}
{\mathcal{C}}_{j_1j_2j_3}&=&\text{End}_{\mathfrak{su}(2)}( [2j_1] \otimes [2j_2] \otimes [2j_3] )\\
 &=&\{\ M\in \text{End}([2j_1] \otimes [2j_2] \otimes [2j_3] )  \ |\ [M, s^\alpha_1+s^\alpha_2+s^\alpha_3]=0, \alpha=1,2,3\ \}\ .
\end{eqnarray}
We recall that the decomposition \eqref{eq:J4} allows one to compute the dimension of the centralizer ${\mathcal{C}}_{j_1j_2j_3}$:
\begin{equation}
\text{dim}\left( {\mathcal{C}}_{j_1j_2j_3}\right)= \sum_{j\in \mathcal{J}_{123}} d_j^2\ . \label{eq:dim}
\end{equation}
As explained in the introduction, the goal of this paper is to give a description of ${\mathcal{C}}_{j_1j_2j_3}$ in terms of generators and defining relations.
We now introduce the Racah algebra for this purpose.
\begin{definition} The universal Racah algebra $\mathcal{R}(\alpha_1,\alpha_2,\alpha_3)$ is generated by $ A $, $ B $ and central elements $\alpha_1,\alpha_2,\alpha_3$ and $ C $
subject to the following defining relations
 \begin{eqnarray}
[ B ,[ A , B ]]&=&-2 B ^2-2\{ A , B \}
     +2 ( C +\alpha_1+\alpha_2+\alpha_3) B +2(\alpha_1- C )(\alpha_3-\alpha_2)\ ,\label{eq:UR1}\\
{} [ A ,[ B , A ]]&=&-2 A ^2-2\{ A , B \}
     +2( C +\alpha_1+\alpha_2+\alpha_3) A  +2(\alpha_1-\alpha_2)(\alpha_3- C )\ ,\label{eq:UR2}
\end{eqnarray}
where $\{\cdot,\cdot\}$ is the anticommutator.
\end{definition}
We use the notation $\mathcal{R}(\alpha_1,\alpha_2,\alpha_3)$ indicating the central elements $\alpha_i$ for later convenience when we will replace these central elements by numbers.

The connection between the Racah algebra and the centralizer is given in the following known proposition
\begin{prop}\cite{GZ}\label{pro:phimor}
 The map $\phi$ from $\mathcal{R}(\alpha_1,\alpha_2,\alpha_3)$ to ${\mathcal{C}}_{j_1j_2j_3}$ defined by
 \begin{eqnarray}
  \phi(\alpha_i)=j_i(j_i+1)\quad,\qquad \phi( A )=K_{12}\quad,\qquad \phi( B )=K_{23}\quad\text{and}\qquad  \phi( C )=K_{123}\label{eq:AK}
 \end{eqnarray}
is well-defined (\textit{i.e.} $K_{12}$, $K_{23}$, $K_{123}$ are in ${\mathcal{C}}_{j_1j_2j_3}$) and is an algebra homomorphism.
\end{prop}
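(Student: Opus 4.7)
Two things must be established: that $\phi$ actually lands in $\mathcal{C}_{j_1j_2j_3}$, and that the defining relations \eqref{eq:UR1}--\eqref{eq:UR2} are respected by the substitution \eqref{eq:AK}. The image $\phi(\alpha_i)=j_i(j_i+1)$ is a scalar, hence trivially central. For the generators, $K_{123}$ is the image of the Casimir $\mathcal{K}\in U(\mathfrak{su}(2))$ under the iterated coproduct, so it is central in the diagonal $\mathfrak{su}(2)$-action and commutes with $s^\alpha_1+s^\alpha_2+s^\alpha_3$. The element $K_{12}$ is similarly the Casimir of the diagonal $\mathfrak{su}(2)$-action on the first two tensor factors, hence commutes with $s^\alpha_1+s^\alpha_2$; since it acts as identity on the third factor, it also commutes with $s^\alpha_3$. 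Thus $K_{12}\in\mathcal{C}_{j_1j_2j_3}$, and the case of $K_{23}$ is symmetric.

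For the relations, the strategy is to verify them at the universal level inside $U(\mathfrak{su}(2))^{\otimes 3}$, so that no irreducibility hypothesis is required beyond the scalarity of $K_a$. Introduce the quadratic forms $P_{ab}=\sum_\alpha s^\alpha_a s^\alpha_b$, so that $K_{ab}=K_a+K_b+2P_{ab}$, and observe that $[K_{12},K_{23}]=4[P_{12},P_{23}]$ because $K_1,K_2,K_3$ are central in their respective tensor factors. A direct use of $[s^\alpha_2,s^\beta_2]=i\epsilon_{\alpha\beta\gamma}s^\gamma_2$ then gives
$$[P_{12},P_{23}]=i\sum_{\alpha,\beta,\gamma}\epsilon_{\alpha\beta\gamma}\,s^\alpha_1 s^\gamma_2 s^\beta_3.$$
A second bracket with $P_{23}$, followed by expansion via the contraction identity $\sum_\gamma\epsilon_{\alpha\beta\gamma}\epsilon_{\mu\nu\gamma}=\delta_{\alpha\mu}\delta_{\beta\nu}-\delta_{\alpha\nu}\delta_{\beta\mu}$, produces a polynomial expression in $P_{12},P_{23},P_{13}$ and the scalars $K_a$. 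Relation \eqref{eq:CC} is then invoked to eliminate $P_{13}$ (equivalently $K_{13}$) in favor of $K_{12},K_{23},K_{123}$ and the $K_a$. Matching the result with the right-hand side of \eqref{eq:UR1} is then a mechanical check, and \eqref{eq:UR2} follows by the same computation after the relabeling that swaps the first and third tensor factors.

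The main obstacle is purely combinatorial: the iterated $\epsilon$-tensor bookkeeping becomes unwieldy if attempted naively. A useful device is to introduce the $\mathfrak{su}(2)$-invariant triple product $T=\sum\epsilon_{\alpha\beta\gamma}\,s^\alpha_1 s^\beta_2 s^\gamma_3$, in terms of which both $[P_{12},P_{23}]$ and $[P_{23},T]$ admit compact expressions, reducing the verification to manipulations of quadratic expressions in the $P_{ab}$. This is essentially the computation already carried out in \cite{GZ}, from which the proposition is drawn.
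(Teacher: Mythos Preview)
Your proposal is correct. Note that the paper does not supply its own proof of this proposition: it is simply stated with a citation to \cite{GZ}, as a known result. Your sketch is precisely the standard verification one would expect in that reference---showing that the intermediate Casimirs lie in the centralizer by the Casimir property on subfactors, and then checking the Racah relations by reducing $[K_{12},K_{23}]$ to the antisymmetric triple product $T$ and iterating. There is nothing to compare against in the paper itself, and your outline faithfully reproduces the argument the citation points to.
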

The surjectivity of the map $\phi$ is proven in the Corollary \ref{cor:sur} below. However this map is not injective. 
Indeed, for example $\prod_{j\in \mathcal{J}_{12}} (A-j(j+1))$ is in the kernel of $\phi$ because of \eqref{eq:CP1}.
The main conjecture of this paper consists in finding a quotient of the universal Racah algebra such that $\phi$ becomes a bijection.
\begin{conj}\label{conj:t} Let $j_1$, $j_2$ and $j_3$ be three positive half-integers or integers 
and let the sets $\mathcal{J}$ and ${\mathcal{M}}$ be defined as in Section \ref{sec:irreps}.
The quotient $\overline{\mathcal{R}}(\alpha_1,\alpha_2,\alpha_3)$ of the universal Racah algebra $\mathcal{R}(\alpha_1,\alpha_2,\alpha_3)$ by the following relations
\begin{eqnarray}
&& \alpha_i=j_i(j_i+1)\ ,\label{eq:quo0}\\
&&\prod_{j\in \mathcal{J}_{12}} ( A -j(j+1))=0\ ,\quad \prod_{j\in \mathcal{J}_{23}} ( B -j(j+1))=0\ ,\quad 
\prod_{j\in \mathcal{J}_{123}}  ( C -j(j+1))=0\label{eq:quo1}\\
&&\prod_{j\in \mathcal{J}_{13}} \big( C - A - B +\alpha_1+\alpha_2+\alpha_3-j(j+1)\big)=0\ ,\label{eq:quo2}\\
&&\prod_{m\in \mathcal{M}_{132}}\big( A + B -\alpha_1-\alpha_2-\alpha_3-m\big)=0\ ,\label{eq:quo3}\\
&&\prod_{m\in \mathcal{M}_{123}}\big( C - A -m\big)=0\ , \qquad \prod_{m\in\mathcal{M}_{231}}\big( C - B -m\big)=0\ ,\label{eq:quo5}
\end{eqnarray}
is isomorphic to ${\mathcal{C}}_{j_1j_2j_3}$.
The isomorphism $\overline{\phi}$ is given by $\overline{\phi}( A )=K_{12}$, $\overline{\phi}( B )=K_{23}$ and $\overline{\phi}( C )=K_{123}$.
\end{conj}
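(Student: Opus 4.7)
The plan is to prove well-definedness of $\overline{\phi}$ first, then surjectivity from the prior corollary, and finally to bound the dimension of $\overline{\mathcal{R}}(\alpha_1,\alpha_2,\alpha_3)$ from above by $\dim\mathcal{C}_{j_1j_2j_3}=\sum_{\ell\in\mathcal{J}_{123}}d_\ell^2$. Well-definedness is routine: the relations \eqref{eq:quo0}--\eqref{eq:quo5} are exactly the minimal-polynomial identities satisfied in $\mathcal{C}_{j_1j_2j_3}$ by $K_{12},\,K_{23},\,K_{123}$, by $K_{13}=K_1+K_2+K_3+K_{123}-K_{12}-K_{23}$ (using \eqref{eq:CC}) and by $K_{12}+K_{23},\,K_{123}-K_{12},\,K_{123}-K_{23}$, as computed in Section \ref{sec:IC}. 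Surjectivity of $\overline{\phi}$ is inherited from that of $\phi$ (the forthcoming Corollary \ref{cor:sur}).

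The core task is therefore the upper bound on dimension. Since $C$ is central in the universal Racah algebra and satisfies a minimal polynomial with simple roots, $\overline{\mathcal{R}}$ decomposes as $\bigoplus_{\ell\in\mathcal{J}_{123}}\overline{\mathcal{R}}^{(\ell)}$, with $\overline{\mathcal{R}}^{(\ell)}$ the specialization at $C=\ell(\ell+1)$. In each block, relation \eqref{eq:quo5} forces $A$ to have eigenvalues $k(k+1)$ only for those $k\in\mathcal{J}_{12}$ with $\ell(\ell+1)-k(k+1)\in\mathcal{M}_{123}$; by definition of $\mathcal{M}_{123}$ these are exactly the middle-row vertices $[2k]$ of $\cB(j_1,j_2,j_3)$ adjacent to $[2\ell]$, so $A$ has exactly $d_\ell$ allowed eigenvalues on this block, and analogously for $B$ via \eqref{eq:quo5}. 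The goal becomes $\dim\overline{\mathcal{R}}^{(\ell)}\leq d_\ell^2$.

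To establish this bound I would proceed in two steps. First, use the universal Racah relations \eqref{eq:UR1}--\eqref{eq:UR2} to rewrite every occurrence of $ABA$ and $BAB$ as a polynomial in $A,B$ of strictly smaller alternating length with $C$-dependent coefficients, so that any word in $A,B$ reduces to an alternating monomial $A^{a_1}B^{b_1}A^{a_2}B^{b_2}\cdots$; the minimal polynomials then cap the individual exponents, yielding a finite spanning set. Second, construct explicit matrix units $E^{(\ell)}_{p,q}$ labelled by pairs of paths $p,q$ in $\cB(j_1,j_2,j_3)$ terminating at $[2\ell]$: starting from Lagrange-interpolating projectors $\pi_A^{(k)}$ and $\pi_B^{(k')}$ onto the eigenspaces of $A$ and $B$ in the block, form the products $\pi_A^{(k)}\pi_B^{(k')}\pi_A^{(k'')}$ and show that, after a normalisation governed by a Racah $6j$-coefficient, these are the sought matrix units. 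Relations \eqref{eq:quo3} and \eqref{eq:quo5} are precisely what make the forbidden combinations vanish, producing at most $d_\ell^2$ non-zero matrix units.

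The main obstacle is the final counting step: showing that the spanning set collapses to exactly $d_\ell^2$ linearly independent elements, no fewer. The defining relations encode the combinatorics of the three Bratteli diagrams $\cB(j_a,j_b,j_c)$ in a subtle way, and in full generality one must check that no accidental coincidence of spectra imposes additional relations beyond those already present. Two routes look feasible. One is induction on $\min(2j_1,2j_2,2j_3)$, exploiting the branching $[2j]\otimes[1]=[2j-1]\oplus[2j+1]$ to reduce to smaller spins where the conjecture has been established in Sections \ref{sec:TL}--\ref{sec:j12k}. The other is a direct realisation of $\overline{\mathcal{R}}^{(\ell)}\cong\mathrm{End}(\mathbb{C}^{d_\ell})$ via the classical Racah $6j$-coefficients, using their unitarity and orthogonality to invert $\overline{\phi}|_{\overline{\mathcal{R}}^{(\ell)}}$ explicitly. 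Either route will depend sensitively on the structure of $\mathcal{M}_{abc}$, which changes with $(j_1,j_2,j_3)$, and this is where I expect the argument to become genuinely case-dependent.
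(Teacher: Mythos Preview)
The statement you are attempting to prove is presented in the paper as a \emph{conjecture}, not a theorem: immediately after stating it the authors write ``We did not succeed in finding a general proof.'' The paper verifies it only in the special cases treated in Sections~\ref{sec:TL}--\ref{sec:j12k}. Your overall architecture---well-definedness of $\overline{\phi}$, surjectivity from Proposition~\ref{cor:sur}, then an upper bound on $\dim\overline{\mathcal{R}}$ via the decomposition along eigenvalues of the central element $C$---is exactly the scheme the paper follows in those special cases (see in particular Sections~\ref{sec:32} and~\ref{sec:j12k}). Where the paper diverges from your proposal is in the execution of the dimension bound: rather than a uniform construction of matrix units or an appeal to $6j$-orthogonality, the paper computes an explicit finite spanning set case by case (with computer assistance in Sections~\ref{sec:brauer}, \ref{sec:f11}, \ref{sec:32}) and then checks linear independence of the images under $\overline{\phi}$.

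Your two proposed routes for closing the gap in general (induction on the smallest spin, or explicit inversion via Racah coefficients) go beyond what the paper attempts, and you correctly flag this step as the real obstacle. One technical point to watch if you pursue either route: your claim that, after setting $C=\ell(\ell+1)$, relation~\eqref{eq:quo5} cuts the spectrum of $A$ down to \emph{exactly} the $d_\ell$ values indexed by edges into $[2\ell]$ requires that whenever $k\in\mathcal{J}_{12}$ and $\ell(\ell+1)-k(k+1)\in\mathcal{M}_{123}$, the pair $([2k],[2\ell])$ is itself an edge of $\cB(j_1,j_2,j_3)$---not merely that some \emph{other} edge happens to produce the same difference. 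The paper notes that the elements of $\mathcal{M}_{123}$ are distinct, but that alone does not exclude this kind of coincidence; ruling it out (or showing it is harmless) would have to be part of any general argument.
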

Let us emphasize that if the conjecture is true, relations \eqref{eq:UR1}-\eqref{eq:UR2} with \eqref{eq:quo1}-\eqref{eq:quo5} provide a presentation in terms 
of generators and relations of the centralizer ${\mathcal{C}}_{j_1j_2j_3}$.
Thanks to the results of Section \ref{sec:IC} , it is easy to show that $\overline{\phi}$ is an homomorphism from $\overline{\mathcal{R}}(\alpha_1,\alpha_2,\alpha_3)$ to ${\mathcal{C}}_{j_1j_2j_3}$ 
(to prove the homomorphism for relation \eqref{eq:quo2}, we have used relation \eqref{eq:CC}).
It remains to prove that $\overline{\phi}$ is injective. We did not succeed in finding a general proof: to that end one should show that the relations \eqref{eq:quo1}-\eqref{eq:quo5} 
generate the whole kernel of $\phi$.
We shall however prove this conjecture for a number of particular values of $j_1$, $j_2$ and $j_3$.

Let us remark that the case of the centralizer of the two-fold tensor product is much simpler. 
The direct sum decomposition \eqref{eq:tp2} of $[2j_1] \otimes [2j_2]$ is multiplicity free. Therefore, the centralizer $\text{End}_{\mathfrak{su}(2)}( [2j_1] \otimes [2j_2] )$ is an abelian algebra of dimension 
$\text{dim}(\mathcal{J}_{12})$ and is isomorphic to the algebra generated by one generator ${A}$ subject to $\prod_{j\in \mathcal{J}_{12}}(A-j(j+1))=0$.

\subsection{Surjectivity from $\mathcal{R}(\alpha_1,\alpha_2,\alpha_3)$ to ${\mathcal{C}}_{j_1j_2j_3}$ \label{sec:sur}}

\begin{prop}\label{cor:sur}
 The homomorphism $\phi$ defined in Proposition \ref{pro:phimor} is surjective.
\end{prop}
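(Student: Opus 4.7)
The plan is to construct, inside the image of $\phi$, a complete system of matrix units of the centralizer. By \eqref{eq:J4}, we have $\mathcal{C}_{j_1j_2j_3} \cong \bigoplus_{\ell\in\mathcal{J}_{123}}\mathrm{Mat}_{d_\ell}(\mathbb{C})$, where each block $\mathrm{Mat}_{d_\ell}(\mathbb{C})$ has rows and columns indexed by those $k\in\mathcal{J}_{12}$ for which $(k,\ell)$ is an edge from the second to the third row of $\cB(j_1,j_2,j_3)$.

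First, I would build the diagonal matrix units. The operators $K_{12}$ and $K_{123}$ commute, and by Subsection \ref{sec:IC} their joint spectrum on $V=[2j_1]\otimes[2j_2]\otimes[2j_3]$ consists precisely of the pairs $(k(k+1),\ell(\ell+1))$ as $(k,\ell)$ ranges over the relevant edges of $\cB(j_1,j_2,j_3)$; in particular, these pairs are pairwise distinct. Lagrange interpolation in the two commuting variables then yields, for each edge $(k,\ell)$, a polynomial $E^\ell_{k,k}=P_{k,\ell}(K_{12},K_{123})\in\phi(\mathcal{R})$ acting as the orthogonal projector onto the corresponding isotypic copy of $[2\ell]$ in $V$. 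These $E^\ell_{k,k}$ are already the diagonal matrix units of $\mathcal{C}_{j_1j_2j_3}$.

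Second, to reach the off-diagonal units, for fixed $\ell$ and $k\neq k'$ both connected to $\ell$, I would consider $E^\ell_{k,k}\,K_{23}\,E^\ell_{k',k'}$. Because $K_{23}$ commutes with $\mathfrak{su}(2)$, this product lies in $\mathcal{C}_{j_1j_2j_3}$, is supported in the $\ell$-block, and maps the $k'$-copy of $[2\ell]$ to the $k$-copy; hence it equals a scalar multiple of the desired off-diagonal matrix unit $E^\ell_{k,k'}$. The main obstacle is that this scalar, essentially a Racah $6j$-symbol, may vanish for certain pairs $(k,k')$, so one cannot expect a single application of $K_{23}$ to produce all off-diagonal units directly.

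To bypass this obstacle, I would apply the same Lagrange-interpolation construction to the other commuting pair $(K_{23},K_{123})$: their joint spectrum on $V$ separates the edges of $\cB(j_2,j_3,j_1)$ from the second to the third row, yielding a second complete system of minimal idempotents $F^\ell_{\tilde k,\tilde k}\in\phi(\mathcal{R})$, indexed by $\tilde k\in\mathcal{J}_{23}$, attached to the $(23)$-first Gelfand--Tsetlin basis of the same isotypic component $[2\ell]^{\oplus d_\ell}$. Since $\{E^\ell_{k,k}\}_k$ and $\{F^\ell_{\tilde k,\tilde k}\}_{\tilde k}$ are two complete systems of rank-one minimal idempotents of the same block $\mathrm{Mat}_{d_\ell}(\mathbb{C})$, related by the invertible $6j$-change-of-basis with all matrix entries nonzero, the triple products $E^\ell_{k,k}\,F^\ell_{\tilde k,\tilde k}\,E^\ell_{k',k'}$ span every rank-one operator on the block and therefore generate all of $\mathrm{Mat}_{d_\ell}(\mathbb{C})$. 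Assembling these over all $\ell$ produces a full basis of $\mathcal{C}_{j_1j_2j_3}$ inside $\phi(\mathcal{R})$, which establishes surjectivity.
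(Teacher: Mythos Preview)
Your argument has a genuine gap at the key step: the assertion that the $6j$-change-of-basis matrix between the $K_{12}$-basis $\{E^\ell_{k,k}\}$ and the $K_{23}$-basis $\{F^\ell_{\tilde k,\tilde k}\}$ has \emph{all matrix entries nonzero}. This is neither obvious nor justified, and in fact it can fail. The entries of that matrix are the components of the eigenvectors of a tridiagonal (Jacobi) matrix, and such eigenvectors can have interior zero components: for instance, the eigenvector of $\left(\begin{smallmatrix}0&1&0\\1&0&1\\0&1&0\end{smallmatrix}\right)$ for eigenvalue $0$ is $(1,0,-1)$. Correspondingly, nontrivial zeros of $\mathfrak{su}(2)$ $6j$-symbols (values equal to $0$ with all four triangle conditions satisfied) do occur. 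Without that nonvanishing, your triple products $E^\ell_{k,k}\,F^\ell_{\tilde k,\tilde k}\,E^\ell_{k',k'}$ need not span the full block, and the argument collapses.

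The paper's proof sidesteps this entirely. It works in the $K_{12}$-eigenbasis of the $\ell$-block and invokes Zhedanov's result that $K_{23}$ acts \emph{tridiagonally with nonzero off-diagonal entries} $a_p$ there. This is exactly the statement that $E^\ell_{k,k}\,K_{23}\,E^\ell_{k',k'}\neq 0$ whenever $|k-k'|=1$ (and $=0$ for $|k-k'|>1$). Tridiagonality with nonzero sub/super-diagonals immediately gives irreducibility of the action of $\langle K_{12},K_{23}\rangle$ on the block, hence the image fills each $\mathrm{Mat}_{d_\ell}(\mathbb{C})$. Ironically, your ``obstacle'' in step~2 is only apparent: the nearest-neighbour entries $E^\ell_{k,k}\,K_{23}\,E^\ell_{k\pm1,k\pm1}$ never vanish, and iterating them produces all matrix units. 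Had you pursued that line (essentially the paper's), you would not have needed the $F$-idempotents or the unjustified $6j$-nonvanishing claim.
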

\proof Let $j\in \mathcal{J}_{123}$. In the space $[2j_1] \otimes [2j_2] \otimes [2j_3]$, there exists a subspace of dimension $d_j$ stable under the action of the centralizer.
By diagonalizing $K_{12}$, we can choose in this subspace $d_j$ independent vectors $v_p$ ($p=p_{min},p_{min}+1,\dots p_{max}$ and $p_{max}-p_{min}+1=d_j$) such that
\begin{equation}
 K_{12} v_p=p(p+1)v_p\quad\text{,}\qquad  K_{123}v_p=j(j+1)v_p\ .
\end{equation}
The numbers $p_{min}$ and $p_{max}$ depend on the choices of $j$ and also of $j_1$, $j_2$ and $j_3$. They can be read directly on the Bratteli diagram $\mathcal{B}(j_1,j_2,j_3)$: 
they correspond to the rightmost and leftmost vertices on the second line connected to the vertex $[2j]$ of the third line.
Following the results in \cite{Z91}, we know that $K_{23}$ acts trigonally on the vectors $v_p$:
\begin{equation}
 K_{23}v_p=a_{p+1} v_{p+1}+a_p v_{p-1} + b_p v_p\ .
\end{equation}
The explicit values of $a_p$ can be computed by using the commutation relations of the Racah algebra \cite{Z91} and we notice that $a_p\neq 0$ for $p=p_{min}+1,p_{min}+2,\dots,p_{max}$. 
Therefore the subspace $\text{span}(\{v_p\})$ of $[2j_1] \otimes [2j_2] \otimes [2j_3]$ is a finite irreducible representation of dimension $d_j$ for the Racah algebra. This result holds 
for any $j\in \mathcal{J}_{123}$. Therefore the dimension of the image of $\phi$ is at least $\text{dim}\left( {\mathcal{C}}_{j_1j_2j_3}\right)= \sum_{j\in \mathcal{J}_{123}} d_j^2$ which 
proves the surjectivity of $\phi$.
 \endproof

\begin{rem}
As explained previously, relations \eqref{eq:quo0}-\eqref{eq:quo5} are in the kernel of $\phi$. Therefore, Proposition \ref{cor:sur} implies also that $\overline{\phi}$ defined in the Conjecture \ref{conj:t} 
is surjective.
\end{rem}

To prove Proposition \ref{cor:sur}, we showed that the elements $K_{12},K_{23}$ and $K_{123}$ generate the whole centralizer of the diagonal action of $U(sl_2)$ in the three-fold tensor product of three representations.
Let us remark that there exists a similar statement at the algebraic level. The algebraic intermediate Casimirs are defined by replacing  $s$ by $\mathfrak{s}$ in relations \eqref{eq:cas1}--\eqref{eq:cas3}.
These algebraic Casimirs generate the whole centralizer of the diagonal embedding of $U(sl_2)$ in $U(sl_2)^{\otimes 3}$. An analogous result (using the whole set of intermediate Casimir elements) holds also for the centralizer of the diagonal embedding of $U(sl_2)$ in $U(sl_2)^{\otimes n}$. This can be checked directly by algebraic manipulations, starting with an arbitrary element of $U(sl_2)^{\otimes n}$ written in a PBW basis and using the conditions that it commutes with the diagonal embedding of 
$U(sl_2)$.

\subsection{Invariance under permutation of $\{\alpha_1,\alpha_2,\alpha_3\}$ \label{sec:S3}}
We prove a general result which shows that it suffices to check  the conjecture only once for each orbit under permutations of $j_1,j_2,j_3$. In other words, when verifying the conjecture, we are always allowed to reorder the three spins $j_1,j_2,j_3$ as we want. For example, in Section \ref{sec:j12k}, we will choose to order $j_1,j_2,j_3$ such that $j_1\geq j_3\geq j_2$ (in general, putting the smallest spin in the middle minimizes the degree of the characteristic equations of $A$ and $B$).
\begin{prop} 
Let $j_1$, $j_2$ and $j_3$ be three positive half-integers or integers. If Conjecture \ref{conj:t} is true for the sequence of spins $(j_1,j_2,j_3)$ then it is also true for every permutation of $j_1,j_2,j_3$.
\end{prop}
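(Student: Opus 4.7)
The plan is to exhibit, for each adjacent transposition $\tau\in\{(12),(23)\}$, an involutive automorphism $\Phi_\tau$ of the universal Racah algebra $\mathcal{R}(\alpha_1,\alpha_2,\alpha_3)$ that permutes the central elements $\alpha_i$ according to $\tau$ and that, under $\phi$, matches the conjugation action by the permutation operator $P_\tau$ on $[2j_1]\otimes[2j_2]\otimes[2j_3]$. If $\Phi_\tau$ descends to an isomorphism $\overline{\Phi}_\tau:\overline{\mathcal{R}}_{j_1j_2j_3}\to\overline{\mathcal{R}}_{\tau(j_1,j_2,j_3)}$ making the commutative square $\overline{\phi}_{\tau(j_1,j_2,j_3)}\circ\overline{\Phi}_\tau=\mathrm{Ad}(P_\tau)\circ\overline{\phi}_{j_1j_2j_3}$, then the bijectivity of $\overline{\phi}$ transfers from one ordering of the spins to the other, and since $(12)$ and $(23)$ generate $S_3$ the result follows for every permutation. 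Guided by \eqref{eq:CC}, which identifies $\phi^{-1}(K_{13})$ with $C+\alpha_1+\alpha_2+\alpha_3-A-B$, the natural definitions are
\[
\Phi_{(12)}:\ A\mapsto A,\ \ B\mapsto C+\alpha_1+\alpha_2+\alpha_3-A-B,\ \ C\mapsto C,\ \ \alpha_1\leftrightarrow\alpha_2,\ \alpha_3\mapsto\alpha_3,
\]
\[
\Phi_{(23)}:\ A\mapsto C+\alpha_1+\alpha_2+\alpha_3-A-B,\ \ B\mapsto B,\ \ C\mapsto C,\ \ \alpha_2\leftrightarrow\alpha_3,\ \alpha_1\mapsto\alpha_1.
\]

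The step I expect to be the main obstacle is the verification that these assignments preserve the defining Racah relations \eqref{eq:UR1}--\eqref{eq:UR2}. This is a direct but tedious computation. It is simplified by the observation that each substitution $B\mapsto C+\alpha_1+\alpha_2+\alpha_3-A-B$ (for $\Phi_{(12)}$) or $A\mapsto C+\alpha_1+\alpha_2+\alpha_3-A-B$ (for $\Phi_{(23)}$) sends $[A,B]$ to $\pm[A,B]$, so that the nested commutators on the left-hand sides of \eqref{eq:UR1}--\eqref{eq:UR2} become linear combinations of the original two. After cancellation, matching the scalar terms reduces to elementary identities such as $(\alpha_1-C)(\alpha_3-\alpha_2)-(\alpha_1-\alpha_2)(\alpha_3-C)=(\alpha_2-C)(\alpha_3-\alpha_1)$. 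Since each $\Phi_\tau$ is its own inverse, it is then an automorphism of $\mathcal{R}(\alpha_1,\alpha_2,\alpha_3)$.

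What remains is to show that $\Phi_\tau$ sends the ideal cut out by \eqref{eq:quo0}--\eqref{eq:quo5} for $(j_1,j_2,j_3)$ to the ideal for $\tau(j_1,j_2,j_3)$, and that the outer commutative square holds; these amount to combinatorial bookkeeping. The sets $\mathcal{J}_{ab}$ depend only on the unordered pair $\{j_a,j_b\}$, while the $\mathcal{M}_{abc}$ depend on the ordered Bratteli diagram $\mathcal{B}(j_a,j_b,j_c)$; both therefore relabel as required when positions are permuted. Under $\Phi_{(12)}$, for instance, the assignment $B\leftrightarrow C-A-B+\alpha_1+\alpha_2+\alpha_3$ swaps the $B$-factor of \eqref{eq:quo1} with \eqref{eq:quo2}, while $A+B-\alpha_1-\alpha_2-\alpha_3\leftrightarrow-(C-B)$ swaps \eqref{eq:quo3} with the second factor of \eqref{eq:quo5}; the remaining relations are fixed after the relabeling $j_1\leftrightarrow j_2$, and the analysis for $\Phi_{(23)}$ is identical. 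The commutative square is then immediate, since $\mathrm{Ad}(P_\tau)$ acts on $K_{12},K_{23},K_{123}$ exactly as $\Phi_\tau$ acts on $A,B,C$ once \eqref{eq:CC} is used to identify $K_{13}$.
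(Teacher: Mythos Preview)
Your proof is correct and follows essentially the same strategy as the paper: exhibit, for a generating set of transpositions, explicit involutions of the Racah algebra that permute the $\alpha_i$ and carry the quotient relations for one ordering of the spins to those for the permuted ordering. The only notable differences are cosmetic. First, the paper chooses the generating pair $(1,3)$ and $(1,2)$ rather than your adjacent transpositions; their $(1,3)$ map is simply $A\leftrightarrow B$, which makes the verification of \eqref{eq:UR1}--\eqref{eq:UR2} immediate, whereas both of your maps involve the substitution by $C+\alpha_1+\alpha_2+\alpha_3-A-B$. Second, the paper does not build the commutative square with $\mathrm{Ad}(P_\tau)$: it argues instead that, since the centralizers for the two orderings are isomorphic and the quotiented Racah algebras are now shown to be isomorphic, equality of dimensions for one ordering transfers to the other, and then the already-established surjectivity of $\overline{\phi}$ finishes the argument. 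Your commutative-square formulation is slightly more robust (it does not appeal to finite-dimensionality), but both routes are equally valid here.
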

\begin{proof}
Let $j_1$, $j_2$ and $j_3$ be three positive half-integers or integers. 
For any two representations $V$ and $W$ of $\mathfrak{su}(2)$, the representations $V\otimes W$ and $W\otimes V$ are isomorphic. 
Therefore, for any permutation $\pi$ of $\{1,2,3\}$, the centralizer $\mathcal{C}_{j_1j_2j_3}$ is isomorphic to $\mathcal{C}_{j_{\pi(1)}j_{\pi(2)}j_{\pi(3)}}$. 
So in order to prove the proposition, we need to prove that the quotiented Racah algebra $\overline{\mathcal{R}}(\alpha_{\pi(1)},\alpha_{\pi(2)},\alpha_{\pi(3)})$ 
is isomorphic to $\overline{\mathcal{R}}(\alpha_{1},\alpha_{2},\alpha_{3})$ for any permutation $\pi$ of $\{1,2,3\}$.

We claim that the following maps on generators provide isomorphisms of algebras:
\begin{equation}\label{iso13}
\phi_1\ :\ \ \ \begin{array}{rcl}
\overline{\mathcal{R}}(\alpha_{3},\alpha_{2},\alpha_{1}) & \to & \overline{\mathcal{R}}(\alpha_{1},\alpha_{2},\alpha_{3})\\
A & \mapsto & B\\
B & \mapsto & A\\
C & \mapsto & C
\end{array}
\end{equation}
\begin{equation}\label{iso12}
\phi_2\ :\ \ \ \begin{array}{rcl}
\overline{\mathcal{R}}(\alpha_{2},\alpha_{1},\alpha_{3}) & \to & \overline{\mathcal{R}}(\alpha_{1},\alpha_{2},\alpha_{3})\\
A & \mapsto & A\\
B & \mapsto & C+\alpha_1+\alpha_2+\alpha_3-A-B\\
C & \mapsto & C
\end{array}
\end{equation}
The maps are obviously invertible so it remains to prove that they extend to homomorphisms of algebras, by checking that they preserve the defining relations of the quotiented Racah algebra, namely relations \eqref{eq:UR1}--\eqref{eq:UR2} and \eqref{eq:quo1}--\eqref{eq:quo5}.

For $\phi_1$, the verification of relations \eqref{eq:UR1}--\eqref{eq:UR2} is immediate. For the characteristic equations \eqref{eq:quo1}--\eqref{eq:quo5}, the verification follows immediately once we know how the sets $\mathcal{J}$ and $\mathcal{M}$ involved in these relations transform when exchanging $1$ and $3$. We see at once that $\mathcal{J}_{123}$, $\mathcal{J}_{13}$ and $\mathcal{M}_{132}$ are invariant while $\mathcal{J}_{12}$ and $\mathcal{J}_{23}$ are exchanged, and so are $\mathcal{M}_{123}$ and $\mathcal{M}_{213}$. We used that $\mathcal{J}_{ab}=\mathcal{J}_{ba}$ and  $\mathcal{M}_{abc}=\mathcal{M}_{bac}$.

For $\phi_2$, the fact that relations \eqref{eq:UR1}--\eqref{eq:UR2} hold can be checked by a straightforward explicit calculation. For the characteristic equations \eqref{eq:quo1}--\eqref{eq:quo5}, as before, the verification follows immediately once we know how the sets $\mathcal{J}$ and $\mathcal{M}$ transform when exchanging $1$ and $2$. We see at once that $\mathcal{J}_{123}$, $\mathcal{J}_{12}$ and $\mathcal{M}_{123}$ are invariant while $\mathcal{J}_{23}$ and $\mathcal{J}_{13}$ are exchanged, and so are $\mathcal{M}_{132}$ and $\mathcal{M}_{231}$.

To conclude the proof, we note that for any permutation $\pi$ of $\{1,2,3\}$, we obtain an isomorphism between $\overline{\mathcal{R}}(\alpha_{\pi(1)},\alpha_{\pi(2)},\alpha_{\pi(3)})$ and $\overline{\mathcal{R}}(\alpha_{1},\alpha_{2},\alpha_{3})$ by a suitable composition of $\phi_1$ and $\phi_2$ (since the transpositions $(1,2)$ and $(1,3)$ generate the whole symmetric group on $3$ letters).
\end{proof}

\section{Quotiented Racah $\overline{\mathcal{R}}\left(\frac{3}{4},\frac{3}{4},\frac{3}{4}\right)$ and Temperley-Lieb algebra\label{sec:TL}}

In this section, we are interested in studying in detail the centralizer  ${\mathcal{C}}_{j_1j_2j_3}$ for the case $j_1=j_2=j_3=\frac{1}{2}$ and to prove that it is isomorphic to
 $\overline{\mathcal{R}}\left(\frac{3}{4},\frac{3}{4},\frac{3}{4}\right)$. 
 We know that a specialization of the Temperley--Lieb algebra is the centralizer and we give an explicit isomorphism 
 between this Temperley--Lieb algebra and $\overline{\mathcal{R}}\left(\frac{3}{4},\frac{3}{4},\frac{3}{4}\right)$.
 
The Bratteli diagram  $\mathcal{B}(\frac{1}{2},\frac{1}{2},\frac{1}{2})$ is displayed in Figure \ref{fig:TL}.
We deduce that $\text{dim}\left( {\mathcal{C}}_{\frac{1}{2},\frac{1}{2},\frac{1}{2}}\right)= 5$, 
$\mathcal{J}_{12}=\mathcal{J}_{13}=\mathcal{J}_{23}=\{0,1\}$, $\mathcal{J}_{123}=\{\frac{1}{2},\frac{3}{2}\}$ and
$\mathcal{M}_{123}=\mathcal{M}_{231}=\mathcal{M}_{132}=\{\frac{7}{4},-\frac{5}{4},\frac{3}{4}\}$.
\begin{figure}[htb]
\begin{center}
 \begin{tikzpicture}[scale=0.35]
\diag{0}{0}{1};\node at (2,-0.5) {$\frac{3}{4}$};

\draw (-0.5,-1.5) -- (-3,-3.5);\draw (1.5,-1.5) -- (4.3,-3.8);

\diag{-4}{-4}{2};\node at (-1,-4.5) {$2$};\filldraw (5,-4.5) circle (2mm);  \node at (6,-4.5) {$0$};

\draw (-3,-5.5) -- (-3,-8.5);\draw (-2,-5.5) -- (4,-8.5);\draw (5,-5.2) -- (5,-8.5);

\diag{-4.5}{-9}{3};\node at (-0.5,-9.5) {$\frac{15}{4}$};\diag{4.5}{-9}{1};\node at (6.5,-9.5) {$\frac{3}{4}$};

\node at (-14,-2.5) {$\otimes$};\diag{-13}{-2}{1};\node at (-14,-6.5) {$\otimes$};\diag{-13}{-6}{1};

\end{tikzpicture}
\caption{Bratteli diagram $\mathcal{B}(\frac{1}{2},\frac{1}{2},\frac{1}{2})$. On the right of each Young tableau, 
the corresponding value of the Casimir is recalled.\label{fig:TL}}
\end{center}
\end{figure}
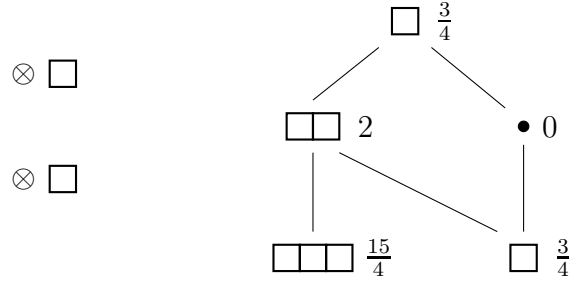

\begin{prop}\label{pro:1TL}
 The quotiented Racah algebra $\overline{\mathcal{R}}\left(\frac{3}{4},\frac{3}{4},\frac{3}{4}\right)$ is generated by the 
 central element $G$ and two generators $A$ and $B$ satisfying
  \begin{eqnarray}
 && A^2=2A\ ,\qquad B^2=2B\ ,\qquad (G-1)(G-4)=0\ ,\qquad G=\{A,B\}-2A-2B+4\ ,\label{eq:tl2}\qquad\\
 &&  GA=2\{A,B\}-3A-4B+6\ ,\qquad GB=2\{A,B\}-4A-3B+6\ ,\label{eq:lem03}\\
&& ABA=GA\ ,\qquad  BAB=GB\ ,\label{eq:tl1}
 \end{eqnarray}
 with the identification ${G}= C +\frac{1}{4}$.
\end{prop}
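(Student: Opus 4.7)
The plan is to let $\mathcal{A}$ denote the algebra abstractly presented by $A$, $B$ and a central generator $G$ subject to the relations \eqref{eq:tl2}--\eqref{eq:tl1}, and to prove that the map $\psi:\mathcal{A}\to\overline{\mathcal{R}}\bigl(\tfrac{3}{4},\tfrac{3}{4},\tfrac{3}{4}\bigr)$ sending $A,B\mapsto A,B$ and $G\mapsto C+\tfrac{1}{4}$ is an isomorphism. The strategy has three ingredients: (i) verify each relation of $\mathcal{A}$ in $\overline{\mathcal{R}}$, so that $\psi$ is a well-defined surjective homomorphism; (ii) bound $\dim\mathcal{A}\leq 5$ by exhibiting a spanning set; (iii) combine with Proposition \ref{cor:sur} and $\dim\mathcal{C}_{\frac{1}{2}\frac{1}{2}\frac{1}{2}}=1^2+2^2=5$ (read off the Bratteli diagram $\mathcal{B}(\tfrac12,\tfrac12,\tfrac12)$) to get the sandwich $5\leq\dim\overline{\mathcal{R}}\leq\dim\mathcal{A}\leq 5$, forcing $\psi$ to be bijective. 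As a byproduct, this would also establish Conjecture \ref{conj:t} in this case.

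For (i), one specialises everywhere to $\alpha_1=\alpha_2=\alpha_3=\tfrac{3}{4}$, $\mathcal{J}_{12}=\mathcal{J}_{23}=\mathcal{J}_{13}=\{0,1\}$, $\mathcal{J}_{123}=\{\tfrac12,\tfrac32\}$ and $\mathcal{M}_{123}=\mathcal{M}_{231}=\mathcal{M}_{132}=\{\tfrac34,-\tfrac54,\tfrac74\}$, and substitutes $G=C+\tfrac14$. The derivations split into four steps. The quadratic identities $A^2=2A$, $B^2=2B$ and $(G-1)(G-4)=0$ are direct instances of \eqref{eq:quo1}. The cubic identities $ABA=GA$ and $BAB=GB$ follow from the universal Racah relations \eqref{eq:UR1}--\eqref{eq:UR2}: the constant terms $2(\alpha_1-C)(\alpha_3-\alpha_2)$ and $2(\alpha_1-\alpha_2)(\alpha_3-C)$ vanish by symmetry of the $\alpha_i$, and after expanding $[B,[A,B]]=2BAB-B^2A-AB^2$ and substituting $A^2=2A$, $B^2=2B$, each universal relation collapses directly to the desired cubic. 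The linear identity $GA=2G+A-2$ (and symmetrically $GB=2G+B-2$) is extracted from \eqref{eq:quo5}, which reads $(G-A-1)(G-A+1)(G-A-2)=0$ after the shift; expanding this cubic using $A^2=2A$, $AGA=2GA$ and $G^2=5G-4$ leaves only one linear factor. Finally, the Casimir-type relation $G=\{A,B\}-2A-2B+4$ is obtained by expanding the two-factor equation \eqref{eq:quo2}, which becomes $(G-A-B+2)(G-A-B)=0$, and inserting $GA+GB=4G+A+B-4$ from the previous step; the result simplifies immediately to the claim. Substituting this expression for $G$ into $GA=2G+A-2$ and $GB=2G+B-2$ then produces the expanded forms \eqref{eq:lem03}, and the remaining characteristic equation \eqref{eq:quo3} for $A+B$ follows as a direct consequence.

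For (ii), \eqref{eq:tl2} expresses $G$ as a polynomial in $A$ and $B$, so $\mathcal{A}$ is generated by $A$ and $B$ alone; combined with $A^2=2A$, $B^2=2B$ and $ABA,BAB\in\mathrm{span}(1,A,B,AB,BA)$ from \eqref{eq:lem03}--\eqref{eq:tl1}, any monomial in $A,B$ of length at least three reduces to a linear combination of shorter ones, so $\{1,A,B,AB,BA\}$ spans $\mathcal{A}$. The main obstacle is bookkeeping rather than conceptual: the clean expression $G=\{A,B\}-2A-2B+4$ cannot be read from any single characteristic equation but emerges only after chaining \eqref{eq:quo5} with \eqref{eq:quo2}, so the order of derivations matters, and one must keep careful track of the centrality of $G$ together with the auxiliary identities $AGA=2GA$, $BGB=2GB$ and $G^2=5G-4$ throughout the expansions.
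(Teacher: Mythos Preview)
Your argument is correct, and in fact proves slightly more than the proposition as stated (it simultaneously establishes Conjecture~\ref{conj:t} for $j_1=j_2=j_3=\tfrac12$), but the strategy differs from the paper's. The paper's proof is a direct two-sided algebraic equivalence: it writes out the specialised relations \eqref{eq:UR1}--\eqref{eq:UR2} and \eqref{eq:quo1}--\eqref{eq:quo5} explicitly (these become \eqref{eq:quTL0}--\eqref{eq:quTL3} after substituting $G=C+\tfrac14$) and then shows by expansion that this system is \emph{equivalent} to \eqref{eq:tl2}--\eqref{eq:tl1}, i.e.\ each set of relations implies the other. No dimension count or appeal to the centralizer is used at this stage; those enter only later, after the further simplification in Proposition~\ref{pro:2TL}. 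By contrast, you prove only one implication algebraically (that the relations \eqref{eq:tl2}--\eqref{eq:tl1} hold in $\overline{\mathcal{R}}$) and replace the reverse implication by the dimension sandwich $5=\dim\mathcal{C}_{\frac12\frac12\frac12}\leq\dim\overline{\mathcal{R}}\leq\dim\mathcal{A}\leq 5$, invoking Proposition~\ref{cor:sur}. Your route is cleaner in that it avoids verifying that every defining relation of $\overline{\mathcal{R}}$ (in particular the cubic \eqref{eq:quo3} and the degree-three relations \eqref{eq:quo5}) can be rederived from \eqref{eq:tl2}--\eqref{eq:tl1}; the price is that the proposition no longer stands on its own as a purely algebraic statement about presentations, but depends on the representation-theoretic input from Proposition~\ref{cor:sur}. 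One small remark: the final clause of your step~(i), asserting that \eqref{eq:quo3} ``follows as a direct consequence'', is not needed for the well-definedness of $\psi$ (it is a relation of $\overline{\mathcal{R}}$, not of $\mathcal{A}$) and can be dropped.
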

\proof
By using the sets $\mathcal{J}$ and $\mathcal{M}$ given at the beginning of this section and the identification given in the proposition, relations \eqref{eq:UR1}-\eqref{eq:UR2} 
and \eqref{eq:quo1}--\eqref{eq:quo5} become
\begin{eqnarray}
&& [B,[A,B]]=-2B^2-2\{A,B\}+2B(G+2)\quad,\quad   [[A,B],A]=-2A^2-2\{A,B\}+2A(G+2)\ ,\label{eq:quTL0}\\
 &&A(A-2)=0\quad,\quad B(B-2)=0\quad,\quad \left(G-1\right)\left(G-4\right)=0\ ,\label{eq:quTL1}\\
 &&  \left(G-A-B+2\right)\left(G-A-B\right)=0 \quad,\quad\left(A+B-4\right)\left(A+B-3\right)\left(A+B-1\right)=0\ ,\qquad\label{eq:quTL2}\\
  && \left(G-A-2\right)\left(G-A-1\right)\left(G-A+1\right)=0\quad , \quad
  \left(G-B-2\right)\left(G-B-1\right)\left(G-B+1\right)=0\ .\label{eq:quTL3}
\end{eqnarray}
By expanding the products in \eqref{eq:quTL0}--\eqref{eq:quTL3}, we can show that they are equivalent to the relations given in the proposition.
\endproof
The presentation of the quotient of the Racah algebra $\overline{\mathcal{R}}\left(\frac{3}{4},\frac{3}{4},\frac{3}{4}\right)$ can be simplified further.
Indeed, the fourth relation in \eqref{eq:tl2} allows to suppress the generator $G$ in the presentation.
\begin{prop}\label{pro:2TL}The quotiented Racah algebra $\overline{\mathcal{R}}\left(\frac{3}{4},\frac{3}{4},\frac{3}{4}\right)$ is generated by $A$ and $B$ subject to 
 \begin{eqnarray}
 &&A^2=2A\ ,\qquad B^2=2B\ ,\label{eq:tl4}\\
&&ABA=2\{A,B\}-3A-4B+6\ ,\qquad   BAB=2\{A,B\}-4A-3B+6 \ .\label{eq:tl5}
 \end{eqnarray}
\end{prop}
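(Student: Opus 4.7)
The key observation is that the fourth relation in \eqref{eq:tl2} solves for $G$ in terms of $A$ and $B$, namely $G = \{A,B\} - 2A - 2B + 4$, so $G$ can be eliminated as a generator. The plan is to show that Propositions \ref{pro:1TL} and \ref{pro:2TL} define the same algebra by deriving each set of relations from the other.

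The direction from Proposition \ref{pro:1TL} to Proposition \ref{pro:2TL} is immediate: substituting $GA$ from the first equation of \eqref{eq:lem03} into $ABA = GA$ from \eqref{eq:tl1} gives the first equation of \eqref{eq:tl5}, and likewise for $BAB$. The quadratic relations \eqref{eq:tl4} are already part of \eqref{eq:tl2}.

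For the converse, starting from \eqref{eq:tl4}-\eqref{eq:tl5}, I would define $G := \{A,B\} - 2A - 2B + 4$ and check all relations of Proposition \ref{pro:1TL}. Using only $A^2 = 2A$, a direct calculation gives $AG = A^2 B + ABA - 2A^2 - 2AB + 4A = ABA$, and symmetrically $GA = ABA$; the same argument with $B^2 = 2B$ yields $BG = GB = BAB$, so $G$ is central. Relations \eqref{eq:tl1} are then tautological, and \eqref{eq:lem03} follows by combining $GA = ABA$ with \eqref{eq:tl5}, and analogously for $B$.

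The only nontrivial step is verifying the characteristic equation $(G-1)(G-4) = 0$. Using centrality of $G$ together with $GA = ABA$ and $GB = BAB$, I would expand $G^2 = G\{A,B\} - 2GA - 2GB + 4G = ABAB + BABA - 2ABA - 2BAB + 4G$. Reducing $ABAB = A(BAB)$ and $BABA = (BAB)A$ by means of \eqref{eq:tl5} and $A^2 = 2A$ yields $ABAB + BABA = \{A,B\} + 4ABA - 4A$; a further application of \eqref{eq:tl5} to simplify $2ABA - 2BAB = 2A - 2B$ then collapses $G^2$ to $5G - 4$, as required. This is the main obstacle, but it amounts to careful bookkeeping of a finite number of cubic monomials in $A$ and $B$.
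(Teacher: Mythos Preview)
Your proof is correct and follows essentially the same strategy as the paper: derive each presentation from the other by defining $G=\{A,B\}-2A-2B+4$, checking centrality of $G$ via $AG=GA=ABA$ and $BG=GB=BAB$, and then verifying the characteristic equation $(G-1)(G-4)=0$. The only cosmetic difference is that the paper expands the product $(G-1)(G-4)$ directly and simplifies to $ABAB+BABA-2ABA-2BAB-AB-BA+2A+2B$ before reducing, whereas you compute $G^2$ using the already-established relations $GA=ABA$, $GB=BAB$ and arrive at $G^2=5G-4$; both computations are equivalent and equally short.
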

\proof  We must prove that the set of defining relations of Proposition \ref{pro:1TL} is equivalent to the ones of Proposition \ref{pro:2TL}.
To prove that the defining relations of Proposition \ref{pro:1TL} imply the ones of Proposition \ref{pro:2TL} is straightforward. 

To prove the implication in the other direction, we suppose that $A$ and $B$ satisfy relations \eqref{eq:tl4}-\eqref{eq:tl5} and we set 
$G=\{A,B\}-2A-2B+4$. Then, by multiplying this definition of $G$ by $A$ and $B$ on  the right and on the left, we get that $G$ commutes 
with $A$ and $B$ and that relations \eqref{eq:tl1} are satisfied. By using relations \eqref{eq:tl5}, one proves that relations \eqref{eq:lem03} also hold.
The proof of the third relation of \eqref{eq:tl2} is more involved
\begin{eqnarray}
 (G-1)(G-4)&=&(AB+BA-2A-2B+3)(AB+BA-2A-2B)\\
 &=&ABAB+BABA-2ABA-2BAB-AB-BA+2A+2B\ .\label{eq:re1}
\end{eqnarray}
We have used relations \eqref{eq:tl4} to prove \eqref{eq:re1}. Then, by multiplying on the left by $B$ the first relation of \eqref{eq:tl5} 
and by $A$ the second relation of \eqref{eq:tl5}, we get
expressions for $BABA$ and $ABAB$ and we prove that \eqref{eq:re1} vanishes which concludes the proof.
\endproof

A direct consequence of this proposition is that the dimension of the quotiented Racah algebra 
$\overline{\mathcal{R}}\left(\frac{3}{4},\frac{3}{4},\frac{3}{4}\right)$ is equal to 5. A basis is $\{1,A,B,AB,BA\}$. 
Indeed, it is straightforward to show that this set of elements is a generating family. The linear independence of
these elements is proven by noticing that their images by the natural map $\phi$ given in (\ref{eq:AK}) in $\text{End}([1]\otimes[1]\otimes[1])$ are linearly independent.

We have shown that the quotiented Racah algebra and the centralizer have the same dimension, and moreover that the map $\overline{\phi}$ in Conjecture \ref{conj:t} is injective (alternatively, we already know that the map $\overline{\phi}$ is surjective from Proposition \ref{cor:sur}). We conclude

\begin{thm}
Conjecture \ref{conj:t} is verified for $j_1=j_2=j_3=\frac{1}{2}$.
\end{thm}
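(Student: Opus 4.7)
The plan is to combine the ingredients already assembled in this section. From Proposition~\ref{cor:sur} we know that $\overline{\phi}\colon \overline{\mathcal{R}}\left(\tfrac{3}{4},\tfrac{3}{4},\tfrac{3}{4}\right)\to \mathcal{C}_{\frac{1}{2},\frac{1}{2},\frac{1}{2}}$ is surjective, and from the Bratteli diagram $\mathcal{B}(\tfrac{1}{2},\tfrac{1}{2},\tfrac{1}{2})$ together with formula \eqref{eq:dim} we have $\dim \mathcal{C}_{\frac{1}{2},\frac{1}{2},\frac{1}{2}}=5$. It therefore suffices to prove the inequality $\dim\overline{\mathcal{R}}\left(\tfrac{3}{4},\tfrac{3}{4},\tfrac{3}{4}\right)\leq 5$, for then $\overline{\phi}$ must be a bijection.

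To obtain this upper bound I would work with the streamlined presentation of Proposition~\ref{pro:2TL}, in which the algebra is generated by $A$ and $B$ subject only to \eqref{eq:tl4}--\eqref{eq:tl5}. I would then show by a normal-form argument that the five elements $\{1, A, B, AB, BA\}$ span the algebra. The quadratic relations $A^2=2A$ and $B^2=2B$ eliminate repeated adjacent letters, so any monomial in $A,B$ can be assumed to alternate; the cubic relations \eqref{eq:tl5} then express $ABA$ and $BAB$ as linear combinations of monomials of length at most two. A short induction on word length therefore shows that every alternating word, and hence every element of the algebra, lies in the span of $\{1,A,B,AB,BA\}$.

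Linear independence of this spanning set has already been noted in the excerpt: one evaluates $\phi$ on it and observes, by a direct computation using $s_a^\alpha=\tfrac{1}{2}\sigma^\alpha$ on each factor $[1]$, that the five $8\times 8$ matrices $\un,\,K_{12},\,K_{23},\,K_{12}K_{23},\,K_{23}K_{12}$ are linearly independent in $\text{End}([1]\otimes[1]\otimes[1])$. Combining this with the spanning argument gives $\dim\overline{\mathcal{R}}\left(\tfrac{3}{4},\tfrac{3}{4},\tfrac{3}{4}\right)=5$, which together with the surjectivity of $\overline{\phi}$ proves the theorem.

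I do not expect a serious obstacle: once Proposition~\ref{pro:2TL} is in hand the argument is purely a bookkeeping of monomials. The only mildly delicate step is verifying that the rewriting rules derived from \eqref{eq:tl5} actually terminate after length two, so that no longer monomials need to be added to the spanning set; this is ensured by the fact that the right-hand sides of \eqref{eq:tl5} contain only words of length at most two.
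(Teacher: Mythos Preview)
Your proposal is correct and follows essentially the same approach as the paper: use the simplified presentation of Proposition~\ref{pro:2TL} to show that $\{1,A,B,AB,BA\}$ spans the quotiented Racah algebra, verify linear independence via the images under $\phi$ in $\text{End}([1]\otimes[1]\otimes[1])$, and conclude using the surjectivity of $\overline{\phi}$ together with the equality of dimensions.
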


\subsection{Connections with the Temperley--Lieb algebra}

It is however well-known that the centralizer of the action of $U(\mathfrak{su}(2))$ on the tensor product of the fundamental representation is a special case of the Temperley--Lieb algebra \cite{Jimbo}.
Therefore, in view of the preceding result, the quotiented Racah algebra $\overline{\mathcal{R}}\left(\frac{3}{4},\frac{3}{4},\frac{3}{4}\right)$ must be isomorphic to this algebra.
Let us recall the definition of the Temperley--Lieb algebra \cite{TL}.
\begin{definition}
The Temperley--Lieb algebra $TL_3(q)$ is generated by $\sigma_1$ and $\sigma_2$ with the following defining relations
\begin{eqnarray}
&& \sigma_1^2=(q+q^{-1})\sigma_1\quad,\quad \sigma_2^2=(q+q^{-1})\sigma_2\ ,\label{eq:TL1}\\
&& \sigma_1\sigma_2\sigma_1=\sigma_1\quad,\quad \sigma_2\sigma_1\sigma_2=\sigma_2\ .\label{eq:TL2}
\end{eqnarray}
\end{definition}
We can now state a theorem that clarifies the link between the Temperley--Lieb algebra and the Racah algebra.
\begin{thm}\label{th:TL}
 The quotiented Racah algebra $\overline{\mathcal{R}}\left(\frac{3}{4},\frac{3}{4},\frac{3}{4}\right)$ is isomorphic to the Temperley--Lieb algebra $TL_3(1)$. 
 This isomorphism is given explicitly by
\begin{eqnarray}
 \overline{\mathcal{R}}\left(\frac{3}{4},\frac{3}{4},\frac{3}{4}\right) & \to& TL_3(1)\\
A &\mapsto& 2-\sigma_1\nonumber\\
B &\mapsto& 2-\sigma_2.\nonumber
\end{eqnarray}
\end{thm}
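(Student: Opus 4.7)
The plan is to verify that the assignment $A\mapsto 2-\sigma_1$, $B\mapsto 2-\sigma_2$ extends to a well-defined homomorphism from $\overline{\mathcal{R}}\left(\frac{3}{4},\frac{3}{4},\frac{3}{4}\right)$ to $TL_3(1)$, and then to show it is bijective by a dimension count.

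First I would use the simpler presentation of $\overline{\mathcal{R}}\left(\frac{3}{4},\frac{3}{4},\frac{3}{4}\right)$ obtained in Proposition \ref{pro:2TL}, so the only relations to check are
$A^2=2A$, $B^2=2B$, $ABA=2\{A,B\}-3A-4B+6$ and $BAB=2\{A,B\}-4A-3B+6$. Setting $a=2-\sigma_1$ and $b=2-\sigma_2$ in $TL_3(1)$, where the defining parameter is $q+q^{-1}=2$, a direct expansion using $\sigma_i^2=2\sigma_i$ gives $a^2=4-4\sigma_1+\sigma_1^2=4-2\sigma_1=2a$, and similarly $b^2=2b$. For the braid-like relation I would expand $aba=(2-\sigma_1)(2-\sigma_2)(2-\sigma_1)$, apply $\sigma_1\sigma_2\sigma_1=\sigma_1$ and $\sigma_i^2=2\sigma_i$ to reduce every cubic word, and then rewrite everything in terms of $a,b,ab,ba$ to match $2(ab+ba)-3a-4b+6$; the relation $bab=2\{a,b\}-4a-3b+6$ follows by symmetry.

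Once the homomorphism is established, surjectivity is immediate since $\sigma_1=2-a$ and $\sigma_2=2-b$ lie in the image. For injectivity, I would invoke the fact already recorded after Proposition \ref{pro:2TL}, namely that $\{1,A,B,AB,BA\}$ spans $\overline{\mathcal{R}}\left(\frac{3}{4},\frac{3}{4},\frac{3}{4}\right)$ and its dimension equals $5=\dim\mathcal{C}_{\frac12,\frac12,\frac12}$. On the Temperley--Lieb side, it is classical that $\dim TL_n(q)$ is the Catalan number $C_n$, so $\dim TL_3(1)=5$. A surjection between two algebras of equal finite dimension is an isomorphism.

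The only step requiring actual work is the verification of the cubic relations $aba=2\{a,b\}-3a-4b+6$ and $bab=2\{a,b\}-4a-3b+6$ in $TL_3(1)$; this is a routine but slightly tedious expansion. Everything else — checking the quadratic relations, surjectivity, and the dimension equality — is either immediate or was already recorded in the preceding propositions. Thus the main obstacle is bookkeeping rather than a conceptual difficulty, and no delicate argument about the kernel of $\overline{\phi}$ is needed beyond what Propositions \ref{pro:1TL} and \ref{pro:2TL} have already achieved.
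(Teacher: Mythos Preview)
Your proposal is correct and follows essentially the same route as the paper: both rely on the simplified presentation of Proposition~\ref{pro:2TL} and verify that the invertible substitution $A\leftrightarrow 2-\sigma_1$, $B\leftrightarrow 2-\sigma_2$ matches the relations \eqref{eq:tl4}--\eqref{eq:tl5} with \eqref{eq:TL1}--\eqref{eq:TL2}. The only difference is cosmetic: the paper checks directly that the two sets of relations are equivalent under this change of variables, whereas you verify one direction and conclude by the dimension count; both arguments are valid and of the same difficulty.
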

\proof 
It is straightforward to prove that the relations of the Temperley--Lieb algebra  \eqref{eq:TL1}-\eqref{eq:TL2} are equivalent to the ones of the 
Racah algebra $\overline{\mathcal{R}}\left(\frac{3}{4},\frac{3}{4},\frac{3}{4}\right)$ given by \eqref{eq:tl4}-\eqref{eq:tl5}.
\endproof

The image of the central element $G=C+\frac{1}{4}=\{A,B\}-2A-2B+4$ calculated explicitly in $TL_3(1)$ simplifies to:
\[G\mapsto \sigma_1\sigma_2+\sigma_2\sigma_1-2\sigma_1-2\sigma_2+4\ .\] 
It is easy to recognize that the image of $G$ is indeed equal to $4P_1+P_2$ where $P_1$ (respectively, $P_2$) is the projector associated to the irreducible representation of $TL_3(1)$ of dimension 1 (respectively, of dimension 2).

Let us remark that Proposition \ref{pro:1TL} allows to decompose the quotiented Racah algebra according to the eigenvalues of $G$.
Indeed, $\overline{\mathcal{R}}\left(\frac{3}{4},\frac{3}{4},\frac{3}{4}\right)$ is the direct sum of the algebras:
\begin{itemize}
 \item $G=4$, $A=2$ and $B=2$\ ;
 \item $G=1$, $A^2=2A$, $B^2=2B$, $ABA=A$, $BAB=B$ and $\{A,B\}=2A+2B-3$.
\end{itemize}
The first is of dimension 1 and the second of dimension 4. This provides another way to find that 
$\text{dim}\left(\overline{\mathcal{R}}\left(\frac{3}{4},\frac{3}{4},\frac{3}{4}\right)\right)=5$.

\section{Quotiented Racah $\overline{\mathcal{R}}\left(2,2,2\right)$ and Brauer algebra \label{sec:brauer}}

In this section, we bring our attention to the case $j_1=j_2=j_3=1$ which corresponds to the centralizer ${\mathcal{C}}_{111}$. We prove that it is isomorphic to
 $\overline{\mathcal{R}}\left(2,2,2\right)$ and to the Brauer algebra.
 
The Bratteli diagram associated to the tensor product of three spin-1 representations is displayed in Figure \ref{fig:br}.
We observe that $\text{dim}({\mathcal{C}}_{111})=1^2+2^2+3^2+1^2=15$, $\mathcal{J}_{12}=\mathcal{J}_{13}=\mathcal{J}_{23}=\{0,1,3\}$, $\mathcal{J}_{123}=\{0,1,3,6\}$ and 
$\mathcal{M}_{123}=\mathcal{M}_{231}=\mathcal{M}_{132}=\{-4,-2,0,2,4,6\}$.

\begin{figure}[htb]
\begin{center}
 \begin{tikzpicture}[scale=0.35]
%\foreach \t in {1,2,...,5} {\draw[thick] (\t,1) rectangle (\t+1,0);}
%\foreach \t in {1,2,...,2} {\draw[thick] (\t,0) rectangle (\t+1,-1);}
\diag{-0.5}{0}{2};\node at (2.5,-0.5) {$2$};
\draw (-1,-1.5) -- (-6,-3.5);\draw (0.5,-1.5)--(0.5,-3.5);\draw (2,-1.5) -- (6.7,-3.7);
\diag{-9.5}{-4}{4};\node at (-4.5,-4.5) {$6$};
\diag{-0.5}{-4}{2};\node at (2.5,-4.5) {$2$};
\filldraw (7.5,-4.5) circle (2mm); ; \node at (8.5,-4.5) {$0$};

\draw (-8.5,-5.5) -- (-14,-8.5); \draw (-6.5,-5.5) -- (-5,-8.5);  \draw (-4.5,-5.5) -- (3,-8.5);   

\draw (-0.5,-5.5) -- (-3,-8.5);\draw (0.5,-5.5) -- (4,-8.5);\draw (1.5,-5.5) -- (10.2,-8.8);

\draw (7.2,-5.2) -- (5,-8.5);

\diag{-17}{-9}{6};\node at (-10,-9.5) {$12$};
\diag{-6}{-9}{4};\node at (-1,-9.5) {$6$};
\diag{3}{-9}{2};\node at (6,-9.5) {$2$};
\filldraw (11,-9.5) circle (2mm);\node at (12,-9.5) {$0$};

\node at (-24,-2.5) {$\otimes$};\diag{-23}{-2}{2};
\node at (-24,-7) {$\otimes$};\diag{-23}{-6.5}{2};
\end{tikzpicture}
\caption{Bratteli diagram $\cB(1,1,1)$. 
The value of the Casimir is given on the right of each corresponding Young tableau.\label{fig:br}  }
\end{center}
\end{figure}
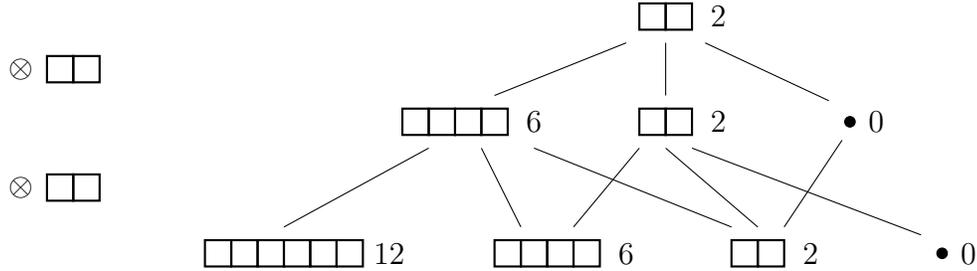

From the definition of the sets $\mathcal{J}$ and $\mathcal{M}$ given previously, we see that the relations \eqref{eq:UR1}-\eqref{eq:UR2} and \eqref{eq:quo1}-\eqref{eq:quo5} of the 
quotiented Racah $\overline{\mathcal{R}}\left(2,2,2\right)$ become
\begin{eqnarray}
&& 2BAB=AB^2+B^2A-2B^2-2\{A,B\}+12B+2CB\ ,\label{eq:br6}\\
&& 2ABA=BA^2+A^2B-2A^2-2\{A,B\}+12A+2CA\ ,\label{eq:br7}\\
&& A(A-2)(A-6)=0\ ,\qquad B(B-2)(B-6)=0\ ,\qquad C(C-2)(C-6)(C-12)=0\ ,\label{eq:br1}\\
&& (C-A-B+6)(C-A-B+4)(C-A-B)=0\ ,\label{eq:br2}\\
&& (A+B-12)(A+B-10)(A+B-8)(A+B-6)(A+B-4)(A+B-2)=0\ ,\label{eq:br3}\\
&& (C-A-6)(C-A-4)(C-A-2)(C-A)(C-A+2)(C-A+4)=0\ ,\label{eq:br4}\\
&& (C-B-6)(C-B-4)(C-B-2)(C-B)(C-B+2)(C-B+4)=0\ .\label{eq:br5}
\end{eqnarray}

We want to demonstrate that $\text{dim}(\overline{\mathcal{R}}\left(2,2,2\right))=15$. Let
\begin{equation}
 \mathcal{S}=\{ 1, A,B,A^2,B^2,AB,BA,A^2B,AB^2,ABA,BAB,BA^2,BABA,A^2B^2,ABAB\}\ .
\end{equation}
We can show after some algebraic manipulations that 
\begin{equation}
 \mathcal{S}_r=\mathcal{S} \cup C\mathcal{S} \cup C^2\mathcal{S} \cup C^3\mathcal{S}
\end{equation}
is a generating set. Therefore we can construct the 60 by 60 matrices $A_r$, $B_r$ and $C_r$ corresponding to the regular actions of $A$, $B$ and $C$ on the set $\mathcal{S}_r$.
By asking that $A_r$, $B_r$ and $C_r$ satisfy \eqref{eq:br6}-\eqref{eq:br5}, we find $40$ independent constraints between the elements of $\mathcal{S}_r$.
It follows that we can reduce the generating set $\mathcal{S}_r$ to
\begin{equation}
 \mathcal{S}'_r=\mathcal{S} \cup \{ C,CA^2,CB^2,CA^2B,CAB^2 \}\ .
\end{equation}
We now repeat the same procedure to construct at this point
the 20 by 20 matrices corresponding to the regular actions on $\mathcal{S}'_r$ and ask that they satisfy the relations of $\overline{\mathcal{R}}\left(2,2,2\right)$.
We find 5 supplementary independent relations which allow to reduce the generating set to $\mathcal{S}$. 

Moreover, the images of the elements of $\mathcal{S}$ in $\text{End}([2]\otimes [2] \otimes [2])$ given by the homomorphism $\overline \phi$ (see Conjecture \ref{conj:t}) are 15 linearly independent matrices. We conclude that $\mathcal{S}$ is a basis and 
$\text{dim}(\overline{\mathcal{R}}\left(2,2,2\right))=15$. The computations we have described above have been performed with the help of formal mathematical software.

We have shown that the quotiented Racah algebra $\overline{\mathcal{R}}\left(2,2,2\right)$ and the centralizer $\mathcal{C}_{111}$ have the same dimension, and moreover that the map $\overline{\phi}$ in Conjecture \ref{conj:t} is injective (alternatively, we already know that the map $\overline{\phi}$ is surjective). We conclude

\begin{thm}
Conjecture \ref{conj:t} is verified for $j_1=j_2=j_3=1$.
\end{thm}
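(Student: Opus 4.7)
The plan is to establish that $\overline{\phi}$ is an isomorphism by showing it is both surjective and that the source algebra has dimension at most $\dim(\mathcal{C}_{111})=15$. Surjectivity is already available from Proposition~\ref{cor:sur}, so the task reduces entirely to proving the dimension bound $\dim\bigl(\overline{\mathcal{R}}(2,2,2)\bigr)\leq 15$. Combined with the fact that the $15$ images under $\overline{\phi}$ of a candidate spanning set will turn out to be linearly independent matrices in $\text{End}([2]^{\otimes 3})$, this forces equality of dimensions and hence the isomorphism.

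The key steps I would carry out, in order, are the following. First I would propose the $15$-element candidate basis
\[\mathcal{S}=\{1,A,B,A^2,B^2,AB,BA,A^2B,AB^2,ABA,BAB,BA^2,BABA,A^2B^2,ABAB\},\]
and a slightly enlarged candidate generating set
\[\mathcal{S}_r=\mathcal{S}\cup C\mathcal{S}\cup C^2\mathcal{S}\cup C^3\mathcal{S},\]
whose spanning property follows directly from the characteristic equation \eqref{eq:br1} for $C$ and from the relations \eqref{eq:br6}--\eqref{eq:br7} which rewrite $BAB$ and $ABA$ in terms of elements of shorter length modulo $C$-action. Second, I would show that $\mathcal{S}_r$ actually collapses to $\mathcal{S}$ by systematically applying the quotient relations \eqref{eq:br2}--\eqref{eq:br5}, which express various polynomials in $C$ times a word in $A,B$ as linear combinations of shorter-length products. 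Concretely this would be done by building the $60\times 60$ regular-action matrices of $A$, $B$, $C$ on $\mathcal{S}_r$, feeding in the $40$ scalar relations extracted from \eqref{eq:br6}--\eqref{eq:br5} (plus the two universal Racah relations \eqref{eq:UR1}--\eqref{eq:UR2}), and solving the resulting linear system over $\mathbb{Q}$; a first pass reduces $\mathcal{S}_r$ to an intermediate generating set $\mathcal{S}'_r=\mathcal{S}\cup\{C,CA^2,CB^2,CA^2B,CAB^2\}$, and a second iteration on the corresponding $20\times 20$ regular-action matrices eliminates the remaining $C$-terms. Third, I would compute the $15$ images $\overline{\phi}(s)$ for $s\in\mathcal{S}$ as explicit matrices in $\text{End}([2]^{\otimes 3})\cong M_{27}(\mathbb{C})$ (using the standard spin-$1$ generators) and verify their linear independence by checking that the $15\times 27^2$ matrix of vectorized entries has full rank.

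The main obstacle is the second step: the reduction from $\mathcal{S}_r$ to $\mathcal{S}$ is not a short hand calculation but a bookkeeping-intensive linear-algebra verification, because there is no obvious small subset of the characteristic relations that does the job cleanly, and one must track how each relation interacts with both multiplication by $C$ and multiplication by $A$ and $B$. The natural way to handle this reliably, as the authors indicate, is to delegate the arithmetic to a computer algebra system; the mathematical content of my proof sketch is that the relations \eqref{eq:br6}--\eqref{eq:br5} provide enough constraints (more than $40+5=45$ effective scalar identities among the $60$ elements of $\mathcal{S}_r$) to cut the dimension down to $15$. Once this is done, dimension matching together with surjectivity of $\overline{\phi}$ and linear independence of $\{\overline{\phi}(s)\}_{s\in\mathcal{S}}$ forces $\overline{\phi}$ to be a bijection, completing the verification of Conjecture~\ref{conj:t} in the case $j_1=j_2=j_3=1$.
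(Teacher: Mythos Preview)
Your proposal is correct and follows essentially the same approach as the paper: the same candidate basis $\mathcal{S}$, the same enlarged set $\mathcal{S}_r$, the same two-pass reduction via regular-action matrices (with the identical intermediate set $\mathcal{S}'_r$), and the same final linear-independence check of the images in $\text{End}([2]^{\otimes 3})$. The only difference is expository---you spell out slightly more explicitly why $\mathcal{S}_r$ spans and how the linear-algebra reduction is organized.
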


\subsection{Connections with the Brauer algebra}
From the previous computations, one gets that the generator $C$ can be expressed in terms of $A$ and $B$ as follows
\begin{eqnarray}
 C&=&6-7A-B+A^2+\{A,B\}+\frac{1}{4}(ABA -A^2B-BA^2) \ .\label{eq:lem11} 
\end{eqnarray}
Therefore, we can suppress $C$ in the presentation of $\overline{\mathcal{R}}\left(2,2,2\right)$ to get a simpler presentation.
However, for the case treated in this section, we can do better by using the connection with the Brauer algebra. 
Indeed, it is known that this algebra is isomorphic to $\overline{\mathcal{R}}\left(2,2,2\right)$ \cite{LZ,LZ2}.
Let us recall the definition of the Brauer algebra:
\begin{definition} \cite{Brauer}
The Brauer algebra $B_3(\eta)$ is generated by $s_1$, $s_2$, $e_1$ and $e_2$ with the following defining relations
\begin{eqnarray}
&&s_1^2=1\quad,\quad s_2^2=1\quad,\quad e_1^2= \eta e_1\quad,\quad e_2^2=\eta e_2\quad,\quad s_1e_1=e_1s_1=e_1\quad,\quad s_2e_2=e_2s_2=e_2\ ,\\
&&s_1s_2s_1=s_2s_1s_2\quad,\quad e_1e_2e_1=e_1\quad,\quad e_2e_1e_2=e_2\ ,\\
&&s_1e_2e_1=s_2e_1\quad,\quad e_2e_1s_2=e_2s_1\ .
\end{eqnarray}
\end{definition}
The theorem below gives the precise connection between the Brauer algebra and the quotiented Racah algebra.
\begin{thm}\label{th:br}
 The quotiented Racah algebra  $\overline{\mathcal{R}}\left(2,2,2\right)$ is isomorphic to the Brauer algebra $B_3(3)$. This isomorphism is given explicitly by
\begin{eqnarray}
  \overline{\mathcal{R}}\left(2,2,2\right) & \rightarrow&B_3(3)\\
A &\mapsto& 2(s_1-e_1)+4\nonumber\\
B &\mapsto& 2(s_2-e_2)+4.\nonumber
\end{eqnarray}
\end{thm}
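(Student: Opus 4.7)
The plan is to exploit the two isomorphisms now at hand. By the theorem just proved, the map $\overline{\phi}:\overline{\mathcal{R}}(2,2,2)\to\mathcal{C}_{111}$ sending $A\mapsto K_{12}$ and $B\mapsto K_{23}$ is an isomorphism. On the other side, Brauer's classical theorem \cite{Brauer} (see also \cite{LZ,LZ2}) tells us that the natural action of $B_3(3)$ on $[2]^{\otimes 3}$ defines a surjection $\rho:B_3(3)\to\mathcal{C}_{111}$; since $\dim B_3(3)=5\cdot 3\cdot 1=15=\dim\mathcal{C}_{111}$, this surjection is an isomorphism. Denoting the map of the theorem by $\psi:\overline{\mathcal{R}}(2,2,2)\to B_3(3)$, the statement reduces to verifying $\rho\circ\psi=\overline{\phi}$ on the two generators $A$ and $B$.

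The key computation is the identification of $\rho(s_1)$ and $\rho(e_1)$ acting on $[2]\otimes[2]=[4]\oplus[2]\oplus[0]$. We have $\rho(s_1)=P_{12}$ (the permutation) and $\rho(e_1)=3\Pi_0$ (three times the projector onto the trivial summand). The three operators $K_{12}$, $P_{12}$, $\Pi_0$ act by scalars on each summand, with eigenvalues $(6,2,0)$, $(+1,-1,+1)$ and $(0,0,1)$ respectively. Solving $K_{12}=\alpha P_{12}+\beta\Pi_0+\gamma\,\un$ yields $\alpha=2$, $\gamma=4$, $\beta=-6$, whence
\[
K_{12}=2P_{12}-6\Pi_0+4\,\un=\rho\bigl(2(s_1-e_1)+4\bigr),
\]
and the parallel calculation on positions $(2,3)$ gives $K_{23}=\rho\bigl(2(s_2-e_2)+4\bigr)$. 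Thus $\rho\circ\psi=\overline{\phi}$ on $A$ and $B$, and since both $\rho$ and $\overline{\phi}$ are isomorphisms, so is $\psi$.

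The main obstacle here is just the eigenvalue bookkeeping identifying $K_{12}$ with $2(s_1-e_1)+4$; every other step simply invokes an already-established isomorphism. If one wished for a proof that bypasses Brauer's theorem, it would suffice to check directly in $B_3(3)$ that the images $X=2(s_1-e_1)+4$ and $Y=2(s_2-e_2)+4$ satisfy the defining relations \eqref{eq:UR1}--\eqref{eq:UR2} and \eqref{eq:br1}--\eqref{eq:br5} of $\overline{\mathcal{R}}(2,2,2)$, and then conclude by dimension. The crucial identity for this direct route is $(X-4)^2=4(\un+e_1)$, which follows from $s_1^2=\un$, $s_1e_1=e_1s_1=e_1$ and $e_1^2=3e_1$; it yields at once the characteristic equation $X(X-2)(X-6)=0$ (after multiplying by $X$ and using $Xe_1=0$) and the surjectivity of $\psi$, since $e_1=\tfrac{1}{4}\bigl[(X-4)^2-4\bigr]$ and hence $s_1=e_1+\tfrac{1}{2}(X-4)$ both lie in the image.
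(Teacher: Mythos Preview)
Your proof is correct and follows essentially the same approach as the paper: both $\overline{\mathcal{R}}(2,2,2)$ and $B_3(3)$ are isomorphic to $\mathcal{C}_{111}$, and the explicit map is pinned down by checking that the images of $A$ and $2(s_1-e_1)+4$ in $\text{End}([2]^{\otimes 3})$ coincide. Your eigenvalue bookkeeping makes explicit precisely the verification the paper leaves to the reader, and your additional remark on the direct route (recovering $e_1$ and $s_1$ from $X$) anticipates the inverse map recorded just after the paper's proof.
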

\proof 
Both algebras $\overline{\mathcal{R}}\left(2,2,2\right)$ and $B_3(3)$ are isomorphic to $C_{111}$. Therefore they are isomorphic. 
The explicit mapping is obtained because the images of $A$ (resp. $B$) and $2(s_1-e_1)+4$ (resp. $2(s_2-e_2)+4$) in $\text{End}([2]\otimes [2] \otimes[2])$ are equal.
\endproof

The inverse map is given by:
\[e_1\mapsto \frac{(A-2)(A-6)}{4}\,,\ \ \ \ \ s_1\mapsto \frac{A^2}{4}-\frac{3A}{2}+1\ ,\]
and similarly for $e_2,s_2$, in terms of $B$. We note that $e_1$ (respectively, $e_2)$ is 3 times the projector on the eigenspace of $A$ (respectively, of $B$) for the eigenvalue $0$ for the decomposition of $\overline{\mathcal{R}}\left(2,2,2\right)$ in a sum of eigenspaces of $A$ (respectively, $B$).

The image of the central element $C=6-7A-B+A^2+\{A,B\}+\frac{1}{4}(ABA -A^2B-BA^2)$ calculated explicitly in $B_3(3)$ simplifies (a lot) to:
\[C\mapsto 6+2(s_1-e_1)+2(s_2-e_2)+2s_1(s_2-e_2)s_1\ .\] 
This central element of $B_3(3)$ is equal to $12P_{+1}+6P_2+2P_3+0P_{-1}$ where $P_{\pm1}$ (respectively, $P_2$ and $P_3$) are the projectors associated to the irreducible representations of $B_3(3)$ of dimension 1, where $e_i\mapsto 0$ and $s_i\mapsto \pm1$ (respectively, of dimension 2 and of dimension 3).

As in the previous case, we can decompose the quotiented Racah algebra according to the eigenvalues of $C$.
Indeed, $\overline{\mathcal{R}}\left(2,2,2\right)$ is the direct sum of the algebras:
\begin{itemize}
 \item $C=12$, $A=6$ and $B=6$\ ;
 \item $C=6$, 
 \begin{eqnarray}
&& A(A-2)(A-6)=0\ ,\qquad B(B-2)(B-6)=0\ ,\\
&& (A+B-12)(A+B-10)(A+B-6)=0\ ,\\
&& 2BAB=AB^2+B^2A-2B^2-2\{A,B\}+24B\ ,\\
&& 2ABA=BA^2+A^2B-2A^2-2\{A,B\}+24A\ ;
\end{eqnarray}
 \item $C=2$, 
 \begin{eqnarray}
&& A(A-2)(A-6)=0\ ,\qquad B(B-2)(B-6)=0\ ,\\
&& (A+B-8)(A+B-6)(A+B-2)=0\ ,\\
&& 2BAB=AB^2+B^2A-2B^2-2\{A,B\}+16B\ ,\\
&& 2ABA=BA^2+A^2B-2A^2-2\{A,B\}+16A\ ;
\end{eqnarray}
  \item $C=0$, $A=2$ and $B=2$.
\end{itemize}
The first and the fourth algebras are of dimension $1$. After some algebraic manipulations not detailed here, one can show that the second is of dimension $4$ 
and the third is of dimension $9$. This is another way to find that 
$\text{dim}\left(\overline{\mathcal{R}}\left(2,2,2\right)\right)=15$.

\section{Quotiented Racah $\overline{\mathcal{R}}\left(j(j+1),\frac{3}{4},\frac{3}{4}\right)$ and classical one-boundary Temperley--Lieb algebra  \label{sec:sff}}

In this section, we focus on the case $j_1=j$ for $j=1,\frac{3}{2},2,\dots$ and $j_2=j_3=\frac{1}{2}$
and provide the description of the centralizer ${\mathcal{C}}_{j,\frac{1}{2},\frac{1}{2}}$ for $j=1,\frac{3}{2},2,\dots$ in terms of generators and relations by using the quotiented Racah algebra $\overline{\mathcal{R}}\left(j(j+1),\frac{3}{4},\frac{3}{4}\right)$. 
We prove that it is isomorphic to the one-boundary Temperley--Lieb algebra \cite{MS, MW,NRG}.

The Bratteli diagrams associated to the tensor product of two spin-$\frac{1}{2}$ and one spin-$j$ representations are displayed 
in Figures \ref{fig:jff} and \ref{fig:ffj}.
We read out that $\text{dim}({\mathcal{C}}_{j,\frac{1}{2},\frac{1}{2}})=1^2+2^2+1^2=6$.
The sets $\mathcal{J}$ are given by
\begin{equation}
 \mathcal{J}_{12}=\mathcal{J}_{13}= \left\{j-\frac{1}{2},j+\frac{1}{2}\right\}\quad\text{,} \qquad  \mathcal{J}_{23}=\{0,1\}
 \quad\text{,} \qquad \mathcal{J}_{123}=\left\{j-1,j,j+1 \right\}\ ,
\end{equation}
and the sets $\mathcal{M}$ by
\begin{eqnarray} 
 &&\mathcal{M}_{123}=\mathcal{M}_{132}=\left\{j+\frac{5}{4},\ -j-\frac{3}{4},\ j+\frac{1}{4},\ -j+\frac{1}{4}\right\}\ ,
\end{eqnarray}
and
\begin{eqnarray} 
 &&\mathcal{M}_{231}=\left\{j(j+3),\ (j+2)(j-1),\ j(j+1),\ (j+1)(j-2)\right\}\ .
\end{eqnarray}

\begin{figure}[htb]
\begin{center}
 \begin{tikzpicture}[scale=0.35]
\node at (0,0) {$[2j]$};%\node at (4,0) {$j(j+1)$};

\draw (-1,-1.2) -- (-4,-3.5);\draw (1,-1.2) -- (4,-3.5);

\node at (-6,-4.5) {$[2j+1]$};\node at (6,-4.5) {$[2j-1]$};

\draw (-8,-5.2) -- (-12,-8.2);\draw (-4,-5.2) -- (-1,-8.2);\draw (4,-5.2) -- (1,-8.2);\draw (8,-5.2) -- (12,-8.2);

\node at (-14,-9) {$[2j+2]$};\node at (0,-9) {$[2j]$};\node at (14,-9) {$[2j-2]$};

\node at (-23,-2.5) {$\otimes$};\diag{-22}{-2}{1};\node at (-23,-6.5) {$\otimes$};\diag{-22}{-6}{1};

\end{tikzpicture}
\caption{Bratteli diagram $\mathcal{B}(j,\frac{1}{2},\frac{1}{2})$ ($j\geq 1$). \label{fig:jff}}
\end{center}
\end{figure}

\begin{figure}[htb]
\begin{center}
 \begin{tikzpicture}[scale=0.35]
\diag{-0.5}{0}{1};%\node at (4,0) {$j(j+1)$};

\draw (-1,-1.2) -- (-4.3,-3.5);\draw (1,-1.2) -- (5.2,-3.8);

\diag{-7}{-4}{2};\filldraw (6,-4.5) circle (2mm);  

\draw (-7.5,-5.3) -- (-12,-8.2);\draw (-6,-5.3) -- (-1,-8.2);\draw (5.2,-5) -- (1,-8.2);\draw (-4.3,-5.3) -- (12,-8.2);

\node at (-14,-9) {$[2j+2]$};\node at (0,-9) {$[2j]$};\node at (14,-9) {$[2j-2]$};

\node at (-24,-2.5) {$\otimes$};\diag{-23}{-2}{1};\node at (-23,-6.5) {$\otimes\ [2j]$};

\end{tikzpicture}
\caption{Bratteli diagram $\mathcal{B}(\frac{1}{2},\frac{1}{2},j)$  ($j\geq 1$). \label{fig:ffj}}
\end{center}
\end{figure}

By using the explicit contents given above of the sets $\mathcal{J}$ and $\mathcal{M}$ and by redefining the generators as follows
${\mathcal{A}}=\frac{1}{2j}((j+\frac{1}{2})(j+\frac{3}{2})-A)$, $ \mathcal{B}=2-B$ 
and $G=\frac{1}{2j}((j+1)(j+2)-C)$ with  $z=\frac{2j+1}{2j}$, the relations of the quotiented Racah $\overline{\mathcal{R}}\left(j(j+1),\frac{3}{4},\frac{3}{4}\right)$ become
\begin{eqnarray}
&& \mathcal{B}\mathcal{A} \mathcal{B}=2G+2\{\mathcal{A}, \mathcal{B}\}-4\mathcal{A}-G \mathcal{B}\ ,\label{eq:nc1}\\
&& 2\mathcal{A} \mathcal{B}\mathcal{A}=\left(3z-2\right)\big(\{\mathcal{A}, \mathcal{B}\}-z \mathcal{B})+2(2-z)\mathcal{A}+\left(3z-2-2\mathcal{A}\right)G\ , \label{eq:nc2}\\
&&  {\mathcal{A}}^2=z {\mathcal{A}}\ ,\qquad  \mathcal{B}^2=2 \mathcal{B} \ , \qquad G\left(G-2z\right)\left(G+1-2z\right)=0\ ,\label{eq:nc3}\\
&& ( G-\mathcal{A}+(1-z)\mathcal{B})\ ( G-\mathcal{A}+(1-z)\mathcal{B}-z)=0\label{eq:p1}\\
 && (\mathcal{A}+(z-1)\mathcal{B})\ (\mathcal{A}+(z-1)\mathcal{B}+1-2z)\ 
 (\mathcal{A}+(z-1)\mathcal{B}+1-z)\ (\mathcal{A}+(z-1)\mathcal{B}-z)=0 \ ,\label{eq:p2}\\
 && (G-\mathcal{A})\ (G-\mathcal{A}+1-2z)\ (G-\mathcal{A}+1-z)\ (G-\mathcal{A}-z)=0\ ,\label{eq:p3}\\
 && (G-(z-1)\mathcal{B})\ (G-(z-1)\mathcal{B}+1-2z)\ (G-(z-1)\mathcal{B}-1)\ (G-(z-1)\mathcal{B}-2z)=0 \ .\label{eq:p4}
\end{eqnarray}
We recall that $G$ is a central element.
To simplify the presentation, we need the following lemma.
\begin{lem}\label{lem:j11} The relations %hold in $\overline{\mathcal{R}}\left(j(j+1),\frac{3}{4},\frac{3}{4}\right)$:
\begin{eqnarray}
 &&G=z\mathcal{B}+2\mathcal{A}-\{\mathcal{A},\mathcal{B}\}\ ,\label{lem:j1}\\
 &&  (G+1-2z)\mathcal{B}=0   \ ,   \label{lem:j2} \\ 
 && (G+1-2z)(G-2\mathcal{A})   =0\ .\label{lem:j3}
\end{eqnarray}
hold in $\overline{\mathcal{R}}\left(j(j+1),\frac{3}{4},\frac{3}{4}\right)$:
\end{lem}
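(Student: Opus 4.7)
The plan is to derive the three assertions in order, with \eqref{lem:j1} first, since it provides the substitution rule that unlocks everything else, and \eqref{lem:j3} following immediately once \eqref{lem:j2} is in hand. Throughout I will use the centrality of $G$, the quadratic relations $\mathcal{A}^2=z\mathcal{A}$ and $\mathcal{B}^2=2\mathcal{B}$, and the inequalities $z\neq 1,\,z\neq 2,\,z\neq 2/3$ that follow from $j\geq 1$.

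\textbf{Relation \eqref{lem:j1}.} I would multiply \eqref{eq:nc2} on the left by $\mathcal{A}$; using $\mathcal{A}^2=z\mathcal{A}$ and the centrality of $G$, the $\mathcal{B}$-linear contributions cancel and the identity collapses to $(2-z)\mathcal{A}\mathcal{B}\mathcal{A}=(2-z)(2z\mathcal{A}-\mathcal{A} G)$. Cancelling the factor $2-z$ yields the cubic $\mathcal{A}\mathcal{B}\mathcal{A}=(2z-G)\mathcal{A}$. Plugging this back into \eqref{eq:nc2} removes the cubic term, and after collecting and dividing by the nonzero scalar $3z-2$ one recovers exactly \eqref{lem:j1}. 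As a by-product I would then feed $\{\mathcal{A},\mathcal{B}\}=z\mathcal{B}+2\mathcal{A}-G$ into \eqref{eq:nc1} to collapse it into the symmetric cubic $\mathcal{B}\mathcal{A}\mathcal{B}=(2z-G)\mathcal{B}$, needed below.

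\textbf{Relation \eqref{lem:j2}.} Expanding the characteristic equation \eqref{eq:p1} using $\mathcal{A}^2=z\mathcal{A}$, $\mathcal{B}^2=2\mathcal{B}$ and the centrality of $G$, and then substituting \eqref{lem:j1} everywhere, produces the factorised central identity
\[
(G+1-2z)\bigl(G-2\mathcal{A}-2(z-1)\mathcal{B}\bigr)=0,\qquad(\star)
\]
which is equivalent to $(G+1-2z)\{\mathcal{A},\mathcal{B}\}=(2-z)(G+1-2z)\mathcal{B}$. Setting $P:=G+1-2z$ (central), I would multiply $(\star)$ by $\mathcal{A}$ on the left and on the right and subtract; dividing by the nonzero factor $2(z-1)$ gives $P[\mathcal{A},\mathcal{B}]=0$. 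Together with the anticommutator form of $(\star)$ this yields $P\mathcal{A}\mathcal{B}=P\mathcal{B}\mathcal{A}=\tfrac{2-z}{2}P\mathcal{B}$. Now I compute $P\mathcal{B}\mathcal{A}\mathcal{B}$ in two ways: using the symmetric cubic above it equals $P(2z-G)\mathcal{B}$, while using the preceding identity together with $\mathcal{B}^2=2\mathcal{B}$ it equals $(2-z)P\mathcal{B}$. Equating and rearranging gives
\[
(G+1-2z)\bigl(G-(3z-2)\bigr)\mathcal{B}=0.
\]
For $j\geq 1$ the scalar $3z-2$ does not belong to $\{0,2z-1,2z\}$, so it is not a root of the polynomial $G(G-2z)(G+1-2z)$ which annihilates $G$. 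By Bezout in $k[x]$ the polynomial $x-(3z-2)$ therefore has a central inverse $p(G)\in k[G]$; multiplying by $p(G)$ yields \eqref{lem:j2}.

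\textbf{Relation \eqref{lem:j3}.} Rewriting $(\star)$ as $(G+1-2z)(G-2\mathcal{A})=2(z-1)(G+1-2z)\mathcal{B}$ and invoking \eqref{lem:j2} to kill the right-hand side gives \eqref{lem:j3} directly.

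The main obstacle is the identification of the scalar $3z-2$ in the third paragraph: one has to find a manipulation that produces a central polynomial equation involving $\mathcal{B}$ whose second factor is \emph{not} in the spectrum of $G$. Every other step is a direct reduction using only the quadratic relations satisfied by $\mathcal{A},\mathcal{B}$, the centrality of $G$, and the fact that \eqref{lem:j1} eliminates $\{\mathcal{A},\mathcal{B}\}$ in favour of linear expressions in $\mathcal{A},\mathcal{B},G$.
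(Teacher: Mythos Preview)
Your argument is correct. The derivations of \eqref{lem:j1} and \eqref{lem:j3} are essentially equivalent to the paper's: for \eqref{lem:j1} the paper multiplies \eqref{eq:nc2} by $1+\tfrac{2}{z-2}\mathcal{A}$ (noting that this inverts $3z-2-2\mathcal{A}$ up to the scalar $3z-2$), while you multiply by $\mathcal{A}$ to first isolate the cubic $\mathcal{A}\mathcal{B}\mathcal{A}=(2z-G)\mathcal{A}$ and then substitute back; both routes cancel the same factor and arrive at the same linear identity. For \eqref{lem:j3} both proofs simply combine $(\star)$ with \eqref{lem:j2}.

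Where your proof genuinely diverges is in \eqref{lem:j2}. The paper, after reaching $(\star)$ from \eqref{eq:p1}, expands the degree-four relation \eqref{eq:p3} for $G-\mathcal{A}$, uses $(\star)$ to replace $\mathcal{A}$ by $G/2-(z-1)\mathcal{B}$ inside the factor $(G+1-2z)(\cdots)$, and then invokes the cubic for $G$ to simplify $G^2$; this yields $(G+1-2z)\mathcal{B}=0$ directly. You bypass \eqref{eq:p3} entirely: from $(\star)$ you extract $P[\mathcal{A},\mathcal{B}]=0$ and $P\mathcal{A}\mathcal{B}=\tfrac{2-z}{2}P\mathcal{B}$, then evaluate $P\mathcal{B}\mathcal{A}\mathcal{B}$ in two ways using the derived cubic $\mathcal{B}\mathcal{A}\mathcal{B}=(2z-G)\mathcal{B}$ to produce $(G+1-2z)(G-(3z-2))\mathcal{B}=0$, and finally invert $G-(3z-2)$ via B\'ezout against the minimal polynomial of $G$. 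Your route is more self-contained (it never touches \eqref{eq:p3}) and makes the role of the spectrum of $G$ explicit; the paper's route avoids the invertibility argument but requires handling a longer polynomial expansion.
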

\proof Observe that $\left(1+\frac{2}{z-2}\mathcal{A}  \right) \left(3z-2-2\mathcal{A}\right)=(3z-2)$ and multiply relation \eqref{eq:nc2} by the element $\left(1+\frac{2}{z-2}\mathcal{A}  \right)$ 
to find \eqref{lem:j1}. Note that $3z-2$ and $z-2$ do not vanish for $j=1,\frac{3}{2},\dots$.
Expanding \eqref{eq:p1}, one arrives at
\begin{equation}
 (G-z)(G-2\mathcal{A})+(z-1)\big(\{\mathcal{A} ,\mathcal{B}\}-2G\mathcal{B}+(3z-2)\mathcal{B}\big)   =0 \;.
\end{equation}
Replacing the anticommutator $\{\mathcal{A} ,\mathcal{B}\}$ and using \eqref{lem:j1} in the previous relation, one gets
\begin{equation}
 (G+1-2z)(G-2\mathcal{A}-2(z-1)\mathcal{B})  =0 \;.\label{eq:prt3}
\end{equation}
Expanding \eqref{eq:p3}, one finds
\begin{equation}
  (G+1-2z)\big( 2(z-1)G\mathcal{A}-2z(3z-1)\mathcal{A}+z(z+1)G\big)=0\;.
\end{equation}
Replacing $\mathcal{A}$ in the last factor by $G/2-(z-1)\mathcal{B}$ (because of \eqref{eq:prt3}) and then $G^2$ by $2zG$ (because of the third relation of \eqref{eq:nc3}),
one obtains \eqref{lem:j2}.
Using this result with \eqref{eq:prt3}, one proves relation \eqref{lem:j3}.
\endproof

In view of relation \eqref{lem:j1}, we can give a simpler presentation of $\overline{\mathcal{R}}(j(j+1),\frac{3}{4},\frac{3}{4})$ 
in which the generator $G$ is removed.
\begin{thm}\label{th:1BTL}
 The quotiented Racah algebra $\overline{\mathcal{R}}(j(j+1),\frac{3}{4},\frac{3}{4})$, for $j=1,\frac{3}{2},2,\dots$, is 
 generated by $\mathcal{A}$ and $\mathcal{B}$ subject to the following relations
\begin{eqnarray}
&& \mathcal{B} \mathcal{A} \mathcal{B}=\mathcal{B} \ ,\qquad
\mathcal{A}^2=z\mathcal{A}\ ,\qquad \mathcal{B}^2=2\mathcal{B}\ .\label{eq:i1}
\end{eqnarray}
\end{thm}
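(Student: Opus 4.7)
The plan is to show that the three relations in \eqref{eq:i1} are equivalent, in the presence of Lemma \ref{lem:j11}, to the full set of defining relations \eqref{eq:nc1}--\eqref{eq:p4} of $\overline{\mathcal{R}}(j(j+1),\frac{3}{4},\frac{3}{4})$. I will proceed in two directions.

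\textbf{Relations \eqref{eq:i1} hold in the quotiented Racah.} The equalities $\mathcal{A}^2=z\mathcal{A}$ and $\mathcal{B}^2=2\mathcal{B}$ are already part of \eqref{eq:nc3}. For $\mathcal{B}\mathcal{A}\mathcal{B}=\mathcal{B}$ I would start from \eqref{eq:nc1}, substitute $G=z\mathcal{B}+2\mathcal{A}-\{\mathcal{A},\mathcal{B}\}$ from \eqref{lem:j1}, and use $G\mathcal{B}=(2z-1)\mathcal{B}$ from \eqref{lem:j2}. The anticommutator terms cancel against $2G$, the $\mathcal{A}$'s cancel, and the remaining $\mathcal{B}$ coefficient reduces to $2z-(2z-1)=1$, giving $\mathcal{B}\mathcal{A}\mathcal{B}=\mathcal{B}$.

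\textbf{The algebra $\mathcal{T}$ defined by \eqref{eq:i1} has dimension at most $6$.} The quadratic relations on $\mathcal{A}$ and $\mathcal{B}$ reduce any word to an alternating word in $\mathcal{A}$ and $\mathcal{B}$. Applying $\mathcal{B}\mathcal{A}\mathcal{B}=\mathcal{B}$ as a rewriting rule, any alternating word of length at least four contains the substring $\mathcal{B}\mathcal{A}\mathcal{B}$ at some position and can be shortened; for instance $\mathcal{A}\mathcal{B}\mathcal{A}\mathcal{B}=\mathcal{A}(\mathcal{B}\mathcal{A}\mathcal{B})\mathcal{B}^{-1}\cdot\mathcal{B}$ should be handled more directly as follows: the subword $\mathcal{B}\mathcal{A}\mathcal{B}$ sits inside $\mathcal{A}\mathcal{B}\mathcal{A}\mathcal{B}$, hence $\mathcal{A}\mathcal{B}\mathcal{A}\mathcal{B}=\mathcal{A}\mathcal{B}$, and similarly $\mathcal{B}\mathcal{A}\mathcal{B}\mathcal{A}=\mathcal{B}\mathcal{A}$. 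Consequently $\{1,\mathcal{A},\mathcal{B},\mathcal{A}\mathcal{B},\mathcal{B}\mathcal{A},\mathcal{A}\mathcal{B}\mathcal{A}\}$ is a spanning set of $\mathcal{T}$, so $\dim\mathcal{T}\leq 6$.

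\textbf{Conclusion by a dimension argument.} By the first step, the identity map on generators extends to a surjective homomorphism $\mathcal{T}\twoheadrightarrow\overline{\mathcal{R}}(j(j+1),\frac{3}{4},\frac{3}{4})$. Composing with $\overline{\phi}$ and using Proposition \ref{cor:sur}, together with the Bratteli count $\dim\mathcal{C}_{j,\frac{1}{2},\frac{1}{2}}=6$, gives a chain of surjections $\mathcal{T}\twoheadrightarrow\overline{\mathcal{R}}\twoheadrightarrow\mathcal{C}_{j,\frac{1}{2},\frac{1}{2}}$ whose endpoints have dimensions $\leq 6$ and $=6$ respectively. Therefore all three algebras coincide, which proves the theorem and incidentally yields a second verification of Conjecture \ref{conj:t} in this family (as well as the fact that $\overline{\phi}$ is an isomorphism).

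The main obstacle is really contained in Lemma \ref{lem:j11}, which has already been established; once the central element $G$ can be expressed as $z\mathcal{B}+2\mathcal{A}-\{\mathcal{A},\mathcal{B}\}$ and one has the relation $(G+1-2z)\mathcal{B}=0$, the reduction of \eqref{eq:nc1} to $\mathcal{B}\mathcal{A}\mathcal{B}=\mathcal{B}$ is short, and the combinatorial reduction of alternating words is elementary. The only subtle point is to make sure nothing is lost in eliminating $G$: this is taken care of by the dimension equality with the centralizer, which forces all relations \eqref{eq:nc2}--\eqref{eq:p4} to be automatic consequences of \eqref{eq:i1}.
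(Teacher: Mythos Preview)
Your argument is correct, but the second direction differs from the paper's. The paper shows directly that \eqref{eq:i1} implies \eqref{eq:nc1}--\eqref{eq:p4}: starting from \eqref{eq:i1} one \emph{defines} $G:=z\mathcal{B}+2\mathcal{A}-\{\mathcal{A},\mathcal{B}\}$, checks by left/right multiplication by $\mathcal{A}$ and $\mathcal{B}$ that this $G$ is central and recovers \eqref{eq:nc1}, \eqref{eq:nc2}, then derives the cubic for $G$ and the remaining characteristic relations one by one. You instead bypass this verification entirely with a dimension count: the surjection $\mathcal{T}\twoheadrightarrow\overline{\mathcal{R}}\twoheadrightarrow\mathcal{C}_{j,\frac{1}{2},\frac{1}{2}}$ together with $\dim\mathcal{T}\leq 6=\dim\mathcal{C}_{j,\frac{1}{2},\frac{1}{2}}$ forces both maps to be isomorphisms. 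Your route is shorter and simultaneously establishes Conjecture~\ref{conj:t} for this family (which the paper does separately right after the theorem), at the price of importing Proposition~\ref{cor:sur} and the Bratteli computation into the proof. The paper's route has the advantage of giving a purely algebraic equivalence of presentations, independent of any representation-theoretic input.

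One cosmetic remark: the aborted line involving ``$\mathcal{B}^{-1}$'' should simply be deleted, since $\mathcal{B}$ is not invertible; your subsequent sentence (``the subword $\mathcal{B}\mathcal{A}\mathcal{B}$ sits inside $\mathcal{A}\mathcal{B}\mathcal{A}\mathcal{B}$'') is the correct argument and stands on its own.
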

\proof We must first prove that the relations \eqref{eq:nc1}-\eqref{eq:p4} are equivalent to the relations \eqref{eq:i1}; this 
is straightforward %to show that relations \eqref{eq:nc1}-\eqref{eq:p4} imply relations \eqref{eq:i1} by using 
with the help of Lemma \ref{lem:j11}.
Second, we must show that the relations \eqref{eq:i1} imply the defining relations \eqref{eq:nc1}-\eqref{eq:p4}. Let us 
suppose that $\mathcal{A}$ and $\mathcal{B}$ satisfy \eqref{eq:i1} and define 
\begin{eqnarray}
 G&=&z\mathcal{B}+2\mathcal{A}-\{\mathcal{A},\mathcal{B}\}\ .\label{eq:G1TL}
\end{eqnarray}
Multiplying this last relation on the left and on the right by $\mathcal{B}$, one gets (recalling that $\mathcal{B}^2=2\mathcal{B}$)
\begin{equation}
 G\mathcal{B}=2z\mathcal{B}-\mathcal{B}\mathcal{A}\mathcal{B}=2z\mathcal{B}-\mathcal{B}=\mathcal{B}G\ .\label{eq:GzB}
\end{equation}
This shows that $G$ commutes with $\mathcal{B}$ and, using  \eqref{eq:G1TL} to replace $z\mathcal{B}$, one recovers the relation \eqref{eq:nc1}.
Similarly, upon multiplying the relation \eqref{eq:G1TL} on the left and on the right by $\mathcal{A}$, one finds (using $\mathcal{A}^2=z\mathcal{A}$) that
\begin{equation}
 G\mathcal{A}=2z\mathcal{A}-\mathcal{A}\mathcal{B} \mathcal{A}=\mathcal{A}G\ .
\end{equation}
This proves that $G$ commutes with $\mathcal{A}$ and by adding $\frac{1}{2}(3z-2)$ times relation \eqref{eq:G1TL}, we obtain \eqref{eq:nc2}.
From the previous relations, one sees that
\begin{eqnarray}
 && G^2=(2z-1)(z\mathcal{B}-\{\mathcal{A},\mathcal{B}\})+2G\mathcal{A}=(2z-1)(G-2\mathcal{A})+2G\mathcal{A}
\end{eqnarray}
and one finds \eqref{lem:j3}. Combining with \eqref{eq:GzB}, we prove \eqref{eq:p1} and \eqref{eq:p3}.
This implies that
\begin{equation}
 (G+1-2z)(G-2z)G=(G+1-2z)(2\mathcal{A} -2z)\mathcal{A}=0\ ,
\end{equation}
which is the third relation of \eqref{eq:nc3}. The expansions of  \eqref{eq:p2} and \eqref{eq:p4} read as follows
\begin{eqnarray}
 &&\mathcal{A}  \mathcal{B}\mathcal{A}   \mathcal{B}  
+  \mathcal{B}\mathcal{A} \mathcal{B} \mathcal{A} 
+(z-2)\mathcal{B}\mathcal{A} \mathcal{B} -\{\mathcal{A},\mathcal{B}\} -(z-2)\mathcal{B} =0\ , \\
&&(G+1-2z)\big(  4(z-1)G -(4z-3)(2z-1)  \big) \mathcal{B}=0 \ .
\end{eqnarray}
Both relations are proven easily with the relations given above and this concludes the proof.
 \endproof

From this theorem, we easily obtain a generating set of the quotiented Racah algebra $\overline{\mathcal{R}}(j(j+1),\frac{3}{4},\frac{3}{4})$:
\begin{equation}
 1,\mathcal{A},\mathcal{B}, \mathcal{A}\mathcal{B},\mathcal{B} \mathcal{A}, \mathcal{A}\mathcal{B} \mathcal{A}\;.
\end{equation}
The dimension of $\overline{\mathcal{R}}(j(j+1),\frac{3}{4},\frac{3}{4})$ is therefore less or equal to 6. Since the dimension of the centralizer ${\mathcal{C}}_{j,\frac{1}{2},\frac{1}{2}}$ is 6 (for $j\geq 1$), and since we already know that the map $\overline{\phi}$ of Conjecture \ref{conj:t} is surjective by Proposition \ref{cor:sur}, we can immediately conclude that the set given above is a basis and that the dimension of $\overline{\mathcal{R}}(j(j+1),\frac{3}{4},\frac{3}{4})$ is 6. We thus have the following result:

\begin{thm}
Conjecture \ref{conj:t} is verified for $j_2=j_3=\frac{1}{2}$ and any $j_1\in\frac{\mathbb{N}}{2}$.
\end{thm}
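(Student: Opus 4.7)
The plan is to deduce this theorem as an almost immediate consequence of Theorem \ref{th:1BTL} combined with Proposition \ref{cor:sur}. The case $j_1=\tfrac{1}{2}$ has already been settled in Section \ref{sec:TL}, so I would restrict attention to $j_1=j\geq 1$ and work with the simplified presentation given in Theorem \ref{th:1BTL}, namely generators $\mathcal{A},\mathcal{B}$ subject to $\mathcal{A}^2=z\mathcal{A}$, $\mathcal{B}^2=2\mathcal{B}$ and $\mathcal{B}\mathcal{A}\mathcal{B}=\mathcal{B}$.

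First I would establish a spanning set of size $6$. The two quadratic relations reduce every monomial to a linear combination of strictly alternating words in $\mathcal{A}$ and $\mathcal{B}$. The third relation $\mathcal{B}\mathcal{A}\mathcal{B}=\mathcal{B}$ then shows that any alternating word containing the subword $\mathcal{B}\mathcal{A}\mathcal{B}$ can be shortened. A short case analysis on alternating words rules out everything of length $\geq 4$: words starting with $\mathcal{B}$ of length $\geq 3$ begin with $\mathcal{B}\mathcal{A}\mathcal{B}$ and collapse, and words starting with $\mathcal{A}$ of length $\geq 4$ contain $\mathcal{B}\mathcal{A}\mathcal{B}$ at position $2$ and likewise collapse. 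The only surviving monomials are
\begin{equation}
1,\ \mathcal{A},\ \mathcal{B},\ \mathcal{A}\mathcal{B},\ \mathcal{B}\mathcal{A},\ \mathcal{A}\mathcal{B}\mathcal{A},
\end{equation}
giving the bound $\dim\overline{\mathcal{R}}(j(j+1),\tfrac{3}{4},\tfrac{3}{4})\leq 6$.

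Next I would compute $\dim\mathcal{C}_{j,\frac{1}{2},\frac{1}{2}}$ from the Bratteli diagram in Figure \ref{fig:jff}: the bottom row contains the three vertices $[2j+2]$, $[2j]$, $[2j-2]$ with multiplicities $1,2,1$, hence $\dim\mathcal{C}_{j,\frac{1}{2},\frac{1}{2}}=1^2+2^2+1^2=6$ as recorded in the section. By Proposition \ref{cor:sur} (and the remark following it), the map $\overline{\phi}$ is already known to be surjective onto $\mathcal{C}_{j,\frac{1}{2},\frac{1}{2}}$, so $\dim\overline{\mathcal{R}}(j(j+1),\tfrac{3}{4},\tfrac{3}{4})\geq 6$. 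Combining the two bounds forces equality, making $\overline{\phi}$ a bijection and confirming the conjecture.

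The only non-routine step is the reduction argument in the first paragraph, and even there the main subtlety is merely to check that no further collapse occurs: because the defining relations are symmetric between $\mathcal{A}$ and $\mathcal{B}$ only up to replacing $z\leftrightarrow 2$, and because $\mathcal{A}\mathcal{B}\mathcal{A}$ is not of the form $\mathcal{B}\mathcal{A}\mathcal{B}$, the element $\mathcal{A}\mathcal{B}\mathcal{A}$ genuinely survives and cannot be eliminated from the spanning set. Apart from this verification, the theorem is a direct counting argument built on the two ingredients already in place.
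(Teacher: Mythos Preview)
Your proposal is correct and follows essentially the same approach as the paper: obtain the six-element spanning set $\{1,\mathcal{A},\mathcal{B},\mathcal{A}\mathcal{B},\mathcal{B}\mathcal{A},\mathcal{A}\mathcal{B}\mathcal{A}\}$ from the simplified presentation of Theorem~\ref{th:1BTL}, compare with $\dim\mathcal{C}_{j,\frac{1}{2},\frac{1}{2}}=6$, and conclude via the surjectivity in Proposition~\ref{cor:sur}. Your final paragraph is unnecessary --- linear independence of the spanning set need not be checked directly, since the surjectivity already forces $\dim\geq 6$.
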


\subsection{Connections with the one-boundary Temperley--Lieb algebra}
 
The algebra known as the one-boundary Temperley--Lieb algebra is generated by $\overline{\sigma}_0$ and $\overline{\sigma}_1$ with the defining relations \cite{MS, MW,NRG}
 \begin{equation}
 \overline{\sigma}_0^2=\frac{\sin(\omega)}{\sin(\omega+\gamma)} \overline{\sigma}_0\ , \quad  \overline{\sigma}_1^2=2\cos(\gamma)\overline{\sigma}_1\ , 
 \quad  \overline{\sigma}_1 \overline{\sigma}_0\overline{\sigma}_1= \overline{\sigma}_1\;.  \label{eq:1btl}
\end{equation} 
By setting $\omega=\hbar(2j+1)$ and $\gamma=-\hbar$ and by performing the limit $\hbar\rightarrow 0$, called the classical limit, in these defining relations, we arrive at the following definition:
\begin{definition}
 The classical one-boundary Temperley--Lieb algebra $\text{btl}(j)$ is generated by $\sigma_0$ and $\sigma_1$ subject to
\begin{equation}
 \sigma_0^2=\frac{2j+1}{2j} \sigma_0\ , \quad  \sigma_1^2=2\sigma_1\ , \quad  \sigma_1 \sigma_0\sigma_1= \sigma_1\;.
\end{equation}
\end{definition} We can now formulate the following theorem:
\begin{thm}
 The quotiented Racah algebra  $\overline{\mathcal{R}}\left( j(j+1),\frac{3}{4},\frac{3}{4}\right)$ is isomorphic to the classical one-boundary Temperley--Lieb algebra $\text{btl}(j)$. 
 This isomorphism is given explicitly by
\begin{eqnarray}
  \overline{\mathcal{R}}\left(j(j+1),\frac{3}{4},\frac{3}{4}\right) & \rightarrow& \text{btl}(j)\\
\cA &\mapsto& \sigma_0\nonumber\\
\cB &\mapsto& \sigma_1\nonumber
\end{eqnarray}
\end{thm}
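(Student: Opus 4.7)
The plan is essentially to invoke the presentation already obtained in Theorem \ref{th:1BTL} and match it directly against the defining relations of $\text{btl}(j)$. By Theorem \ref{th:1BTL}, the quotiented Racah algebra $\overline{\mathcal{R}}(j(j+1),\frac{3}{4},\frac{3}{4})$ is generated by $\mathcal{A}$ and $\mathcal{B}$ subject to $\mathcal{B}\mathcal{A}\mathcal{B}=\mathcal{B}$, $\mathcal{A}^2=z\mathcal{A}$ and $\mathcal{B}^2=2\mathcal{B}$, where $z=\frac{2j+1}{2j}$. On the other side, $\text{btl}(j)$ is generated by $\sigma_0$ and $\sigma_1$ subject to $\sigma_0^2=\frac{2j+1}{2j}\sigma_0$, $\sigma_1^2=2\sigma_1$ and $\sigma_1\sigma_0\sigma_1=\sigma_1$. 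These two sets of relations are identical under the assignment $\mathcal{A}\mapsto\sigma_0$, $\mathcal{B}\mapsto\sigma_1$.

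More formally, I would argue as follows. First, to show that the map is a well-defined algebra homomorphism from $\overline{\mathcal{R}}(j(j+1),\frac{3}{4},\frac{3}{4})$ to $\text{btl}(j)$, one checks that the images $\sigma_0,\sigma_1$ satisfy the three defining relations of the quotiented Racah algebra listed in Theorem \ref{th:1BTL}; this is immediate term by term. Second, to show that it is an isomorphism, one constructs the inverse as the homomorphism sending $\sigma_0\mapsto\mathcal{A}$ and $\sigma_1\mapsto\mathcal{B}$, which is well-defined for the same reason: the images satisfy the three defining relations of $\text{btl}(j)$. Composing the two maps gives the identity on generators, hence the identity on each algebra.

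There is really no obstacle here, since Theorem \ref{th:1BTL} already performed all the work by reducing the many characteristic equations \eqref{eq:nc1}--\eqref{eq:p4} to the compact list \eqref{eq:i1}. The only mild care required is to note that the parameter matches: $z=\frac{2j+1}{2j}$ is exactly the scalar appearing in the quadratic relation for $\sigma_0$, which justifies why the specific value of $j$ entering the classical limit of \eqref{eq:1btl} is the same $j$ that parametrizes $\overline{\mathcal{R}}(j(j+1),\frac{3}{4},\frac{3}{4})$. Once this is noted, the proof consists of one sentence: the defining relations of the two algebras coincide under the proposed map, so the map is an isomorphism.
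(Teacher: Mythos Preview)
Your proposal is correct and matches the paper's own treatment: the paper states this theorem without a separate proof, relying on the fact that Theorem~\ref{th:1BTL} has already reduced the presentation of $\overline{\mathcal{R}}(j(j+1),\frac{3}{4},\frac{3}{4})$ to exactly the three relations defining $\text{btl}(j)$, so the identification $\mathcal{A}\leftrightarrow\sigma_0$, $\mathcal{B}\leftrightarrow\sigma_1$ is immediate.
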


The image in $\text{btl}(j)$ of the central element $G=\frac{1}{2j}((j+1)(j+2)-C)=z\mathcal{B}+2\mathcal{A}-\{\mathcal{A},\mathcal{B}\}$ is explicitly determined to be:
\[G\mapsto \frac{2j+1}{2j}\sigma_1+2\sigma_0-\sigma_0\sigma_1-\sigma_1\sigma_0\ .\] 
The algebra $\text{btl}(j)$ has 2 representations of dimension 1, given by $\sigma_1\mapsto 0$ and $\sigma_0\mapsto0$ or $\frac{2j+1}{2j}$. The above central element of $\text{btl}(j)$ is equal to $\frac{2j+1}{j}P_{1}+\frac{j+1}{j}P_{2}+0P_{1'}$ where $P_{1}$ (respectively, $P_{1'}$) is the central projector associated to the $\text{btl}(j)$ representation of dimension 1 with $\sigma_0\mapsto \frac{2j+1}{2j}$ (respectively, $\sigma_0\mapsto 0$) and $P_2$ is the central projector associated to the $\text{btl}(j)$ representation of dimension 2.

\section{Quotiented Racah $\overline{\mathcal{R}}\left(\frac{3}{4},2,2\right)$ and $\mathcal{C}_{\frac{1}{2},1,1}$ \label{sec:f11}} 
 
 %In this section, we study in detail 
 The case $j_1=\frac{1}{2}$ and $j_2=j_3=1$, i.e. the centralizer $\mathcal{C}_{\frac{1}{2},1,1}$,  is the object of this section.
 From the Bratteli diagrams displayed in Figures \ref{fig:f11a} and \ref{fig:f11b}, we determine that $\text{dim}(\mathcal{C}_{\frac{1}{2},1,1})=1+2^2+2^2=9$, 
 $\mathcal{J}_{12}=\mathcal{J}_{13}=\{\frac{3}{4},\frac{15}{4}\}$, 
 $\mathcal{J}_{23}=\{0,2,6\}$, $\mathcal{J}_{123}=\{\frac{3}{4},\frac{15}{4},\frac{35}{4}\}$, $\mathcal{M}_{123}=\mathcal{M}_{132}=\{-3,0,3,5\}$ 
 and $\mathcal{M}_{231}=\{-\frac{9}{4},-\frac{5}{4},\frac{3}{4},\frac{7}{4},\frac{11}{4}\}$.
 \begin{figure}[htb]
\begin{center}
 \begin{tikzpicture}[scale=0.35]
\diag{0}{0}{1};\node at (2,-0.5) {$\frac{3}{4}$};

\draw (-0.5,-1.5) -- (-3,-3.5);\draw (1.5,-1.5) -- (4.3,-3.8);

\diag{-4.5}{-4}{3};\node at (-0.5,-4.5) {$\frac{15}{4}$};\diag{4}{-4}{1};  \node at (6,-4.5) {$\frac{3}{4}$};

\draw (-4,-5.5) -- (-9.5,-8.5);\draw (-3,-5.5) -- (0,-8.5);\draw (4,-5.5) -- (1,-8.5); \draw (-2,-5.5) -- (7.5,-8.8);\draw (5,-5.5) -- (7.7,-8.5);

\diag{-12}{-9}{5};\node at (-6,-9.5) {$\frac{35}{4}$};
\diag{-1}{-9}{3};\node at (3,-9.5) {$\frac{15}{4}$};\diag{7.5}{-9}{1};\node at (9.5,-9.5) {$\frac{3}{4}$};

\node at (-20,-2.5) {$\otimes$};\diag{-19}{-2}{2};\node at (-20,-6.5) {$\otimes$};\diag{-19}{-6}{2};

\end{tikzpicture}
\caption{Bratteli diagram $\mathcal{B}(\frac{1}{2},1,1)$. On the right of each Young tableau, 
the corresponding value of the Casimir is recalled.\label{fig:f11a}}
\end{center}
\end{figure}

\begin{figure}[htb]
\begin{center}
 \begin{tikzpicture}[scale=0.35]
%\foreach \t in {1,2,...,5} {\draw[thick] (\t,1) rectangle (\t+1,0);}
%\foreach \t in {1,2,...,2} {\draw[thick] (\t,0) rectangle (\t+1,-1);}
\diag{-0.5}{0}{2};\node at (2.5,-0.5) {$2$};
\draw (-1,-1.5) -- (-6,-3.5);\draw (0.5,-1.5)--(0.5,-3.5);\draw (2,-1.5) -- (6.7,-3.7);

\diag{-9.5}{-4}{4};\node at (-4.5,-4.5) {$6$};
\diag{-0.5}{-4}{2};\node at (2.5,-4.5) {$2$};
\filldraw (7.5,-4.5) circle (2mm); ; \node at (8.5,-4.5) {$0$};

\draw (-8,-5.5)-- (-9.5,-8.5);
\draw (-7,-5.5)-- (-0.5,-8.5);
\draw (0.5,-5.5) -- (0.5,-8.5);\draw (1.5,-5.5) -- (7.5,-8.8); \draw (7.5,-5.2) -- (8,-8.5);

\diag{-12}{-9}{5};\node at (-6,-9.5) {$\frac{35}{4}$};
\diag{-1}{-9}{3};\node at (3,-9.5) {$\frac{15}{4}$};\diag{7.5}{-9}{1};\node at (9.5,-9.5) {$\frac{3}{4}$};

\node at (-24,-2.5) {$\otimes$};\diag{-23}{-2}{2};
\node at (-24,-7) {$\otimes$};\diag{-23}{-6.5}{1};
\end{tikzpicture}
\caption{Bratteli diagram $\cB(1,1,\frac{1}{2})$. 
The value of the Casimir is recalled on the right of each corresponding Young tableau.\label{fig:f11b}  }
\end{center}
\end{figure}

Given these explicit sets $\mathcal{J}$ and $\mathcal{M}$ and
in terms of the redefined generators ${\mathcal{A}}=A+\frac{1}{4}$ and $G=C+\frac{1}{4}$, 
the defining relations of the quotiented Racah $\overline{\mathcal{R}}\left(\frac{3}{4},2,2\right)$ become
\begin{eqnarray}
&& 2B\mathcal{A} B= \mathcal{A}B^2+B^2\mathcal{A}-2B^2-2\{\mathcal{A},B\}+10B+2GB  \ ,\label{eq:ne1}\\
&& 2\mathcal{A} B\mathcal{A}=3\{\mathcal{A},B\}-7B+2G\mathcal{A}+2G\ , \label{eq:ne2}\\
&&  (\mathcal{A}-1) (\mathcal{A}-4)=0\ ,\qquad  B(B-2)(B-6)=0 \ , \qquad \left(G-1\right)\left(G-4\right)\left(G-9\right)=0\ ,\label{eq:ne3}\\
&& ( G-\mathcal{A}-B+4)\ ( G-\mathcal{A}-B+1)=0\label{eq:pp1}\\
 && (\mathcal{A}+B-10)\  (\mathcal{A}+B-8)\  (\mathcal{A}+B-5)\  (\mathcal{A}+B-2)=0 \ ,\label{eq:pp2}\\
 && (G-\mathcal{A}-5)\ (G-\mathcal{A}-3)\ (G-\mathcal{A})\ (G-\mathcal{A}+3)=0\ ,\label{eq:pp3}\\
 && (G-B-3)\ (G-B-2)\ (G-B-1)\ (G-B+1)\ (G-B+2)\ =0 \ .\label{eq:pp4}
\end{eqnarray}
We recall that $G$ is a central element.

We want to demonstrate that $\text{dim}(\overline{\mathcal{R}}\left(\frac{3}{4},2,2\right))=9$ and shall use to that end the same approach as in Section \ref{sec:brauer}. Let
\begin{equation}
 \mathcal{S}=\{ 1, \mathcal{A},B,B^2,\mathcal{A}B,B\mathcal{A},\mathcal{A}B^2,\mathcal{A}B\mathcal{A},B\mathcal{A}B\}\ .
\end{equation}
We can show after some algebraic manipulations that 
\begin{equation}
 \mathcal{S}_r=\mathcal{S} \cup G\mathcal{S} \cup G^2\mathcal{S}
\end{equation}
is a generating set. We can therefore construct the 27 by 27 matrices $\mathcal{A}_r$, $B_r$ and $G_r$ 
corresponding to the regular actions of $\mathcal{A}$, $B$ and $G$ on the set $\mathcal{S}_r$.
By asking that $\mathcal{A}_r$, $B_r$ and $G_r$ satisfy \eqref{eq:ne1}--\eqref{eq:pp4}, we find $18$ independent constraints between the elements of $\mathcal{S}_r$ and can thus reduce the generating set from $\mathcal{S}_r$ to $\mathcal{S}$. 

Moreover, we can show that the images in $\text{End}([1]\otimes [2] \otimes [2])$ under the homomorphism $\overline \phi$ (see Conjecture \ref{conj:t}) of the 9 elements of $\mathcal{S}$ are 9 linearly independent matrices.
This proves that $\text{dim}(\overline{\mathcal{R}}\left(\frac{3}{4},2,2\right))=9$. The computations have been done by using formal mathematical software.

We thus observe that the quotiented Racah algebra $\overline{\mathcal{R}}\left(\frac{3}{4},2,2\right)$ and the centralizer $\mathcal{C}_{122}$ have the same dimension, and moreover that the map $\overline{\phi}$ in Conjecture \ref{conj:t} is injective (we already know from Proposition \ref{cor:sur} that $\overline{\phi}$ is surjective). We conclude that

\begin{thm}
Conjecture \ref{conj:t} is verified for $j_1=\frac{1}{2}$ and $j_2=j_3=1$.
\end{thm}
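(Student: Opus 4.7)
The plan is to combine the dimension computation carried out in the paragraphs preceding the theorem with the surjectivity of $\overline{\phi}$ established by Proposition \ref{cor:sur}. From the Bratteli diagrams in Figures \ref{fig:f11a} and \ref{fig:f11b} one reads $\dim \mathcal{C}_{\frac{1}{2},1,1} = 9$, so it suffices to show that $\dim \overline{\mathcal{R}}(\frac{3}{4},2,2) \leq 9$: a surjective linear map between finite-dimensional spaces of the same dimension is bijective, and bijectivity of $\overline{\phi}$ is precisely the statement of Conjecture \ref{conj:t} for this case.

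To bound the dimension from above, I would exhibit the candidate spanning set $\mathcal{S} = \{1, \mathcal{A}, B, B^2, \mathcal{A}B, B\mathcal{A}, \mathcal{A}B^2, \mathcal{A}B\mathcal{A}, B\mathcal{A}B\}$ of size 9 and check it spans. Since $G$ is central and is annihilated by the cubic polynomial in \eqref{eq:ne3}, the enlarged set $\mathcal{S}_r = \mathcal{S} \cup G\mathcal{S} \cup G^2\mathcal{S}$ (of size $27$) generates $\overline{\mathcal{R}}(\frac{3}{4},2,2)$ as a vector space, once one has verified, via the characteristic equations for $\mathcal{A}$ and $B$ together with \eqref{eq:ne1}--\eqref{eq:ne2}, that no new monomial in $\mathcal{A}, B$ appears outside $\mathcal{S}$. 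The key step is then to form the $27\times 27$ matrices of the regular action of $\mathcal{A}, B, G$ on $\mathcal{S}_r$, impose the remaining relations \eqref{eq:pp1}--\eqref{eq:pp4}, and extract enough independent linear constraints (one expects $18$) to reduce $\mathcal{S}_r$ back to $\mathcal{S}$.

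To complete the argument, I would verify that $\overline{\phi}(\mathcal{S})$ consists of 9 linearly independent matrices in $\text{End}([1]\otimes[2]\otimes[2])$ by direct computation. This gives $\dim \overline{\mathcal{R}}(\frac{3}{4},2,2) \geq 9$, and together with the upper bound yields the equality $\dim \overline{\mathcal{R}}(\frac{3}{4},2,2) = 9 = \dim \mathcal{C}_{\frac{1}{2},1,1}$, confirming that $\mathcal{S}$ is in fact a basis. The main obstacle is the reduction of $\mathcal{S}_r$ to $\mathcal{S}$, which is a finite but cumbersome linear-algebra task best handled by computer algebra, exactly as in the Brauer algebra case in Section \ref{sec:brauer}; once this reduction is carried out, the remaining steps are routine.
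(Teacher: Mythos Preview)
Your proposal is correct and follows essentially the same approach as the paper: exhibit the nine-element set $\mathcal{S}$, show by computer-assisted linear algebra that the $27$-element set $\mathcal{S}_r=\mathcal{S}\cup G\mathcal{S}\cup G^2\mathcal{S}$ spans and then reduces to $\mathcal{S}$ under the defining relations, verify that the images $\overline{\phi}(\mathcal{S})$ are linearly independent, and conclude via the surjectivity from Proposition~\ref{cor:sur}. The only minor difference is that the paper imposes all of \eqref{eq:ne1}--\eqref{eq:pp4} simultaneously on the regular-action matrices rather than singling out \eqref{eq:pp1}--\eqref{eq:pp4}, but this does not affect the argument.
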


\subsection{A one-boundary Brauer algebra}

The computations show that the generator $G$ can be expressed as follows in terms of $\mathcal{A}$ and $B$ 
\begin{equation}\label{eq:lem12}
G= \frac{3}{2}B-\frac{1}{2}\{\mathcal{A},B\}+\frac{1}{4}\mathcal{A}B\mathcal{A}\ .
\end{equation}
We have also
\begin{equation}\label{eq:lem13} 
 GB= -8+2\mathcal{A}+8B-\frac{1}{2}B^2-2\{\mathcal{A},B\}+\frac{1}{2}\mathcal{A}B\mathcal{A} +\frac{1}{2}B\mathcal{A}B\ .
\end{equation}
We can therefore eliminate $G$ from the presentation of $\overline{\mathcal{R}}\left(\frac{3}{4},2,2\right)$.
\begin{prop}
 The quotiented Racah algebra $\overline{\mathcal{R}}\left(\frac{3}{4},2,2\right)$ is generated by $\mathcal{A}$ and $B$ subject to 
 \begin{eqnarray}
 &&  (\mathcal{A}-1) (\mathcal{A}-4)=0\ ,\qquad  B(B-2)(B-6)=0 \ ,\label{eq:n0} \\
 && B\mathcal{A}B-\mathcal{A}B\mathcal{A}=\mathcal{A}B^2+B^2\mathcal{A}-3B^2-6\{\mathcal{A},B\}+26B-16+4\mathcal{A}\ ,\label{eq:n1}\\
 &&  B\mathcal{A}B^2+16\mathcal{A}B-2\mathcal{A}B^2-8B\mathcal{A}B+12B\mathcal{A}+6B^2-48B-24\mathcal{A}+ 72=0 \ .\label{eq:n2}
 \end{eqnarray}
\end{prop}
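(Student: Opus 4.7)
The plan is to establish the equivalence of the two presentations by proving both directions, following the template of Propositions \ref{pro:2TL} and \ref{th:1BTL}. For the easy direction, I would derive \eqref{eq:n0}--\eqref{eq:n2} from the defining relations \eqref{eq:ne1}--\eqref{eq:pp4} of $\overline{\mathcal{R}}\left(\frac{3}{4},2,2\right)$: relation \eqref{eq:n0} is immediate from \eqref{eq:ne3}; relation \eqref{eq:n1} comes from equating the two expressions for $GB$, one obtained by solving \eqref{eq:ne1} for $GB$ and the other given by \eqref{eq:lem13} (the $G$'s cancel since \eqref{eq:lem13} has $G$ eliminated on the right); relation \eqref{eq:n2} is obtained by an analogous manipulation, for example right-multiplying \eqref{eq:ne1} by $B$ and using the cubic $B(B-2)(B-6)=0$ to reduce $B^3$, together with centrality of $G$ and \eqref{eq:lem13}.

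For the reverse direction, assume $\mathcal{A}$ and $B$ satisfy \eqref{eq:n0}--\eqref{eq:n2}, and define
\[ G = \frac{3}{2}B-\frac{1}{2}\{\mathcal{A},B\}+\frac{1}{4}\mathcal{A}B\mathcal{A}\ . \]
The first key step is to show $G$ is central. Using $\mathcal{A}^2=5\mathcal{A}-4$ (from \eqref{eq:n0}), direct computation shows that both $\mathcal{A}G$ and $G\mathcal{A}$ reduce to $-\{\mathcal{A},B\}+2B+\tfrac{3}{4}\mathcal{A}B\mathcal{A}$, hence $[\mathcal{A},G]=0$. For $[B,G]=0$, one computes $GB-BG=\frac{1}{2}(B^2\mathcal{A}-\mathcal{A}B^2)+\frac{1}{4}(\mathcal{A}B\mathcal{A}B-B\mathcal{A}B\mathcal{A})$, and then multiplies \eqref{eq:n1} both on the left and on the right by $\mathcal{A}$, using $\mathcal{A}^2=5\mathcal{A}-4$, to deduce $\mathcal{A}B\mathcal{A}B-B\mathcal{A}B\mathcal{A}=2(\mathcal{A}B^2-B^2\mathcal{A})$, after which $[B,G]=0$ follows.

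With $G$ central, I would then recover the remaining defining relations. Relation \eqref{eq:ne2} is obtained by multiplying the definition of $G$ by $\mathcal{A}$ (using $\mathcal{A}^2=5\mathcal{A}-4$). Relation \eqref{eq:ne1} is recovered by multiplying the definition of $G$ by $B$ on the right, which produces an identity that matches \eqref{eq:ne1} precisely thanks to \eqref{eq:n2} (this is the role of \eqref{eq:n2}, whose content amounts to the extra piece needed to express $GB$ as in \eqref{eq:lem13}). The mixed characteristic equations \eqref{eq:pp1}--\eqref{eq:pp4} and the cubic $(G-1)(G-4)(G-9)=0$ are then polynomial consequences of the characteristic equations on $\mathcal{A},B$ together with \eqref{eq:n1}--\eqref{eq:n2} once $G$ has been expressed via \eqref{eq:lem12}.

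The hard part will be keeping the verification of \eqref{eq:pp1}--\eqref{eq:pp4} and the cubic for $G$ tractable, since $G$ already contains a cubic word $\mathcal{A}B\mathcal{A}$, so naively $G^3$ has degree up to nine. To avoid grinding through this, I would use the dimension shortcut already exploited elsewhere in the paper: the relations \eqref{eq:n0}--\eqref{eq:n2} suffice to reduce any word in $\mathcal{A},B$ to the nine-element spanning set $\mathcal{S}$ from Section \ref{sec:f11} (relation \eqref{eq:n1} reduces $B\mathcal{A}B$ modulo the basis, and \eqref{eq:n2} reduces $B\mathcal{A}B^2$), so the algebra presented by \eqref{eq:n0}--\eqref{eq:n2} has dimension at most $9$. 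Since it surjects onto $\mathcal{C}_{\frac{1}{2},1,1}$ (via the map $\mathcal{A}\mapsto K_{12}+\tfrac{1}{4}$, $B\mapsto K_{23}$, whose image contains the nine linearly independent matrices described in Section \ref{sec:f11}), its dimension is exactly $9$, matching $\mathrm{dim}\bigl(\overline{\mathcal{R}}\left(\frac{3}{4},2,2\right)\bigr)=9$. The natural surjection from $\overline{\mathcal{R}}\left(\frac{3}{4},2,2\right)$ onto this algebra (given by the easy direction) is therefore an isomorphism, which proves the proposition.
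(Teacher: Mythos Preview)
Your final dimension-counting argument is exactly the paper's proof: derive \eqref{eq:n0}--\eqref{eq:n2} in $\overline{\mathcal{R}}(\tfrac{3}{4},2,2)$, show the algebra presented by \eqref{eq:n0}--\eqref{eq:n2} has dimension $9$, and conclude via the surjection. Two small corrections: the paper obtains \eqref{eq:n2} from expanding \eqref{eq:pp2} (not from multiplying \eqref{eq:ne1} by $B$); and in your spanning argument, note that $B\mathcal{A}B$ is already in $\mathcal{S}$ --- what \eqref{eq:n1} actually buys you is the reduction of $B^2\mathcal{A}$ (the only length-$\leq 3$ alternating word missing from $\mathcal{S}$), while \eqref{eq:n2} handles $B\mathcal{A}B^2$.
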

\proof Relation \eqref{eq:n1} is implied by \eqref{eq:ne1} and \eqref{eq:lem13} whereas relation \eqref{eq:n2} is obtained from \eqref{eq:pp2}.
We can show that the dimension of the algebra generated by $\mathcal{A}$ and $B$ subject to \eqref{eq:n0}-\eqref{eq:n2} is $9$
which concludes the proof.
\endproof

Let us now introduce a new algebra.
\begin{definition}
 The one-boundary Brauer algebra $bB$ is generated by $e_0$, $e_1$ and $s_1$ subject to
 \begin{eqnarray}
  && e_0^2=\frac{3}{2}e_0\ , \quad   s_1^2=1\ , \quad e_1^2=3e_1 \ , \quad s_1e_1=e_1s_1=e_1 \ , \\
  && e_1e_0s_1  =e_1-e_1e_0\ , \quad s_1e_0e_1=e_1-e_0e_1\ ,\\
  && 4e_0s_1e_0=1+2e_0+e_1-s_1 +2\{e_0,s_1-e_1\} -2s_1e_0s_1\ , \\
   && 4e_0e_1e_0=-1+2e_0+s_1-e_1 -2\{e_0,s_1-e_1\} +2s_1e_0s_1\ .
 \end{eqnarray}
\end{definition}
The name of this algebra has been chosen  in view of the similarity with the one-boundary Temperley--Lieb algebra since the relation
\begin{equation}
 e_1e_0e_1=\frac{3}{2}e_1 \ 
\end{equation}
holds in the one-boundary Brauer algebra.
Because of that one can show that the one-boundary Brauer algebra is of dimension $9$ with 
a basis given by 
\begin{equation}
 1, e_0,e_1,s_1,e_0e_1,e_1e_0,e_0s_1,s_1e_0,s_1e_0s_1\ .
\end{equation}

\begin{thm}
 The map 
 \begin{eqnarray}
  \overline{\mathcal{R}}\left(\frac{3}{4},2,2\right) &\to& bB \label{eq:mapf}\\
  \mathcal{A} &\mapsto& 4-2e_0\nonumber\\
  B  &\mapsto& 2(s_1-e_1)+4\nonumber
 \end{eqnarray}
is an algebra isomorphism.
\end{thm}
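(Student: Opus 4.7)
The strategy is to first verify that the proposed assignments $\mathcal{A}\mapsto 4-2e_0$ and $B\mapsto 2(s_1-e_1)+4$ extend to a well-defined algebra homomorphism, and then to use a dimension count together with an explicit surjectivity argument to upgrade this to an isomorphism.

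For well-definedness, I would use the simpler presentation of $\overline{\mathcal{R}}\left(\frac{3}{4},2,2\right)$ given by the preceding proposition, i.e.\ relations \eqref{eq:n0}--\eqref{eq:n2}, since the central element $G$ has been eliminated. The first relation in \eqref{eq:n0} is a one-line check using only $e_0^2=\tfrac{3}{2}e_0$:
\[
(\mathcal{A}-1)(\mathcal{A}-4)\mapsto (3-2e_0)(-2e_0)=-6e_0+4e_0^2=0.
\]
For $B(B-2)(B-6)=0$, I would first compute $(B-2)(B-6)\mapsto 4(s_1-e_1)^2-4=4e_1$ using $s_1^2=1$, $e_1^2=3e_1$ and $s_1e_1=e_1s_1=e_1$; then multiplying by $B$ gives $0$ from the same identities. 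The two harder relations \eqref{eq:n1} and \eqref{eq:n2} require systematic substitution together with the mixed defining relations of $bB$ (in particular the two identities expressing $e_0s_1e_0$ and $e_0e_1e_0$, and the Brauer-type relations $e_1e_0s_1=e_1-e_1e_0$ and $s_1e_0e_1=e_1-e_0e_1$). These will be direct, though lengthy, algebraic verifications.

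For the isomorphism, I would invoke dimensions: from the theorem just proven, $\dim \overline{\mathcal{R}}\left(\frac{3}{4},2,2\right)=\dim \mathcal{C}_{\frac{1}{2},1,1}=9$, and the text has asserted (and justifies via the explicit basis) that $\dim bB=9$. It therefore suffices to show the homomorphism \eqref{eq:mapf} is surjective. From the images of the generators we immediately recover
\[
e_0=2-\tfrac{1}{2}\mathcal{A},\qquad s_1-e_1=\tfrac{1}{2}(B-4),\qquad e_1=\tfrac{1}{4}(B-2)(B-6),
\]
the last identity coming directly from the computation $(B-2)(B-6)\mapsto 4e_1$ carried out above. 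Hence $s_1=(s_1-e_1)+e_1$ is also in the image, so $e_0,e_1,s_1$ all lie in the image, and surjectivity follows. Combined with equality of dimensions, this proves bijectivity.

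The main obstacle is the bookkeeping required to verify relations \eqref{eq:n1} and \eqref{eq:n2} in $bB$: each side expands into a sum of roughly ten words in $e_0,e_1,s_1$, and one must repeatedly use the mixing relations of $bB$ to bring everything to the proposed basis $\{1,e_0,e_1,s_1,e_0e_1,e_1e_0,e_0s_1,s_1e_0,s_1e_0s_1\}$ before comparing coefficients. Once this reduction is automated (or simply ground through once), everything else is conceptually light; the dimension-based surjectivity step closes the argument cleanly.
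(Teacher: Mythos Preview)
Your proposal is correct and follows essentially the same strategy as the paper: verify the homomorphism by direct computation, establish surjectivity by recovering $e_0,e_1,s_1$ from the images of $\mathcal{A}$ and $B$, and conclude via the dimension count. Your surjectivity argument via $(B-2)(B-6)\mapsto 4e_1$ is in fact exactly the inverse map the paper records immediately after the proof.
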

\proof The homomorphism is proved by direct computation. The map is surjective since the image of $\frac{1}{4}B$ is $4s_1-3e_1+5$. The bijection is obtained since the dimension of both algebras are the same.    \endproof

The inverse map is given by:
\[e_0\mapsto 2-\frac{\cA}{2}\,,\ \ \ \ e_1\mapsto \frac{(B-2)(B-6)}{4}\,,\ \ \ \ \ s_1\mapsto \frac{B^2}{4}-\frac{3B}{2}+1\ .\]
We note as in Section \ref{sec:brauer} that $e_1$ is identified with 3 times the projector on the eigenspace of $B$ with eigenvalue $0$ relative to the decomposition of $\overline{\mathcal{R}}\left(\frac{3}{4},2,2\right)$ in a sum of $B$ - eigenspaces. Similarly, $e_0$ is identified with $\frac{3}{2}$ times the projector on the $A$ - eigenspace with eigenvalue $1$.

The image in $bB$ of the central element $G=C+\frac{1}{4}=\frac{3}{2}B-\frac{1}{2}\{\mathcal{A},B\}+\frac{1}{4}\mathcal{A}B\mathcal{A}$ simplifies to:
\[G\mapsto 7-2e_0+2(s_1-e_1)-2s_1e_0s_1\ \] 
when calculated explicitly.
The algebra $bB$ has a single one-dimensional representation $(e_0,e_1\mapsto 0$ and $s_1\mapsto 1$) and 2 irreducible representations of dimension 2, say $V$ and $V'$. They are distinguished by the value of the central element given above (the image of $G$): in one, say $V$, it is equal to 4 and in the other, $V'$,  it is equal to 1. The central element is equal to $9P_1+4P_V+P_{V'}$, where $P_{1}$, $P_V$ and $P_{V'}$ are the central projectors associated to the irreducible representations of $bB$ (with obvious notations).\\

Let us remark that the image in $\text{End}([1] \otimes [2] \otimes [2])$ of $B$ and of $\mathcal{A}$ under $\overline \phi$ acts non-trivially on $[2] \otimes [2]$ and 
%the one of $\mathcal{A}$ 
 $[1] \otimes [2]$ respectively. It is natural to define the algebra $bB$ the way we did since the map \eqref{eq:mapf} then gives an image of $B$ (resp. $\mathcal{A}$) which is the same as in Section \ref{sec:brauer} (resp. Section \ref{sec:sff}).

\section{Quotiented Racah $\overline{\mathcal{R}}\left(\frac{15}{4},\frac{15}{4},\frac{15}{4}\right)$ 
and ${\mathcal{C}}_{\frac{3}{2},\frac{3}{2},\frac{3}{2}}$\label{sec:32}}

%In this section, we study in detail 
We consider in this section the case $j_1=j_2=j_3=\frac{3}{2}$ and show that the centralizer ${\mathcal{C}}_{\frac{3}{2},\frac{3}{2},\frac{3}{2}}$ is isomorphic to
 $\overline{\mathcal{R}}\left(\frac{15}{4},\frac{15}{4},\frac{15}{4}\right)$.
 
The Bratteli diagram associated to the tensor product of three spin-$\frac{3}{2}$ representations is displayed in Figure \ref{fig:32}.
It reveals that $\text{dim}({\mathcal{C}}_{\frac{3}{2},\frac{3}{2},\frac{3}{2}})=1^2+2^2+3^2+4^2+2^2= 34$, $\mathcal{J}_{12}=\mathcal{J}_{13}=\mathcal{J}_{23}=\{0,2,6,12\}$, 
$\mathcal{J}_{123}=\{\frac{3}{4},\frac{15}{4},\frac{35}{4},\frac{63}{4},\frac{99}{4}\}$ and 
$\mathcal{M}_{123}=\mathcal{M}_{231}=\mathcal{M}_{132}=\{-\frac{33}{4},-\frac{21}{4},-\frac{13}{4},-\frac{9}{4},-\frac{5}{4},\frac{7}{4},\frac{11}{4},\frac{15}{4},\frac{27}{4},\frac{39}{4},\frac{51}{4}\}$.
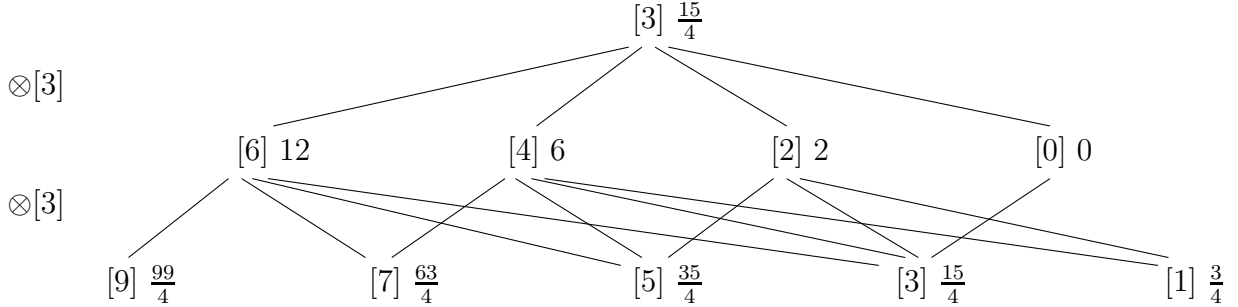
\begin{figure}[htb]
\begin{center}
 \begin{tikzpicture}[scale=0.35]
%\foreach \t in {1,2,...,5} {\draw[thick] (\t,1) rectangle (\t+1,0);}
%\foreach \t in {1,2,...,2} {\draw[thick] (\t,0) rectangle (\t+1,-1);}
\node at (0,0) {$[3]\ \frac{15}{4}$};

\draw (-1.5,-1) -- (-15,-4);\draw (-1,-1) -- (-5,-4);\draw (-0.5,-1) -- (4.5,-4);\draw (0,-1) -- (14.5,-4);

\node at (-15,-5) {$[6]\ 12$};
\node at (-5,-5) {$[4]\ 6$};\node at (5,-5) {$[2]\ 2$};\node at (15,-5) {$[0]\ 0$};

\draw (-16.7,-6) -- (-20.5,-9); \draw (-16.2,-6) -- (-11.5,-9); \draw (-15.8,-6) -- (-1.8,-9.3); \draw (-15.2,-6) -- (8,-9.3); 

\draw (-6.2,-6) -- (-10.5,-9); \draw (-5.8,-6) -- (-1,-9); \draw (-5.2,-6) -- (9,-9);  \draw (-4.7,-6) -- (18.6,-9.3); 

 \draw (4,-6) -- (0,-9); \draw (4.5,-6) -- (9.5,-9);  \draw (5,-6) -- (19,-9);
 
 \draw (14.5,-6) -- (10,-9);

\node at (-20,-10) {$[9]\ \frac{99}{4}$};
\node at (-10,-10) {$[7]\  \frac{63}{4}$};\node at (0,-10) {$[5]\  \frac{35}{4}$};\node at (10,-10) {$[3]\  \frac{15}{4}$};\node at (20,-10) {$[1]\  \frac{3}{4}$};

\node at (-24,-2.5) {$\otimes [3]$};
\node at (-24,-7) {$\otimes [3]$};
\end{tikzpicture}
\caption{Bratteli diagram $\cB(\frac{3}{2},\frac{3}{2},\frac{3}{2})$. 
The value of the Casimir is recalled on the right of each corresponding Young tableau.\label{fig:32}  }
\end{center}
\end{figure} 
If we set $G=C+\frac{1}{4}$, the characteristic polynomial of $G$ is given by 
\begin{equation}
 (G-1)(G-4)(G-9)(G-16)(G-25)=0\ .
\end{equation}
We shall compute the dimensions of the different quotients of $\overline{\mathcal{R}}\left(\frac{15}{4},\frac{15}{4},\frac{15}{4}\right)$ by the relation $G=1$, $G=4$, $G=9$, $G=16$ or $G=25$.
When the value of the central element $G$ (or $C$) is equal to a parameter $g$, 
the relations $\prod_{j\in \mathcal{J}_{12}} ( A -j(j+1))=0$ and $\prod_{m\in \mathcal{M}_{123}}\big( g-\frac{1}{4} - A -m\big)=0$ of the quotiented 
Racah algebra reduce to only one relation  $\prod_{m\in \overline{\mathcal{M}}_{123}}( A -m)=0$ where 
\begin{equation}
 \overline{\mathcal{M}}_{123}= \{j(j+1) \ |\ j\in \mathcal{J}_{12} \} \cap \{m -g+\frac{1}{4} \ |\ m\in \mathcal{M}_{123} \}\ .
\end{equation}
Similarly, the second relation in \eqref{eq:quo1} and the second ones in \eqref{eq:quo5} reduce to only one characteristic polynomial for $B$
and relations \eqref{eq:quo2} and \eqref{eq:quo3} reduce to one characteristic polynomial for $A+B$.
Therefore, for the different cases, one gets:
\begin{itemize}
 \item $G=25$ %and $A=B=12$
  \begin{eqnarray}
&&A=B=12
 \end{eqnarray}

 \item $G=16$
 \begin{eqnarray}
 && (A-6)(A-12)=0\quad ,\qquad(B-6)(B-12)=0\quad ,\qquad (A+B-21)(A+B-15)=0\ ,\\
 && BAB=8\{A,B\}+72A+9B+72\quad ,\qquad ABA=8\{A,B\}+72B+9A+72
 \end{eqnarray}
 \item $G=9$
 \begin{eqnarray}
 && (A-2)(A-6)(A-12)=0\quad ,\qquad(B-2)(B-6)(B-12)=0\\
 && (A+B-18)(A+B-14)(A+B-8)=0\ ,\\
 && 2BAB-B^2A-AB^2=-2B^2-2\{A,B\}+40B\ ,\\ 
 && 2ABA-A^2B-BA^2=-2A^2-2\{A,B\}+40A
 \end{eqnarray}
 \item $G=4$
 \begin{eqnarray}
  &&A(A-2)(A-6)(A-12)=0\quad ,\qquad B(B-2)(B-6)(B-12)=0\ ,\\
  &&(A+B-15)(A+B-13)(A+B-9)(A+B-3)=0\ , \\
 && 2BAB-B^2A-AB^2=-2B^2-2\{A,B\}+30B\ ,\\ 
 && 2ABA-A^2B-BA^2=-2A^2-2\{A,B\}+30A
 \end{eqnarray}
 \item $G=1$
 \begin{eqnarray}
 && (A-2)(A-6)=0\quad ,\qquad(B-2)(B-6)=0\quad ,\qquad (A+B-6)(A+B-10)=0\\
 && BAB=12A+28B-96\quad,\quad ABA=12B+28A-96
 \end{eqnarray}
\end{itemize}
We can show that for both cases $G=16$ and $G=1$, the dimension of the algebra is 4 and a basis is $\{1,A,B,AB\}$.
For the cases $G=4$ and $G=9$, we used formal mathematical software to prove that the dimension is $16$ and $9$, respectively.

We conclude that the sum of these dimensions is 34 which proves that $\overline{\mathcal{R}}\left(\frac{15}{4},\frac{15}{4},\frac{15}{4}\right)$ has the same dimension as the centralizer $\mathcal{C}_{\frac{3}{2},\frac{3}{2},\frac{3}{2}}$. This concludes the proof of the conjecture \ref{conj:t} in this case since we already know that the map $\overline{\phi}$ is surjective by Proposition \ref{cor:sur}.
\begin{thm}
Conjecture \ref{conj:t} is verified for $j_1=j_2=j_3=\frac{3}{2}$.
\end{thm}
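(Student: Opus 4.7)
The plan is to follow the strategy successfully used in Sections \ref{sec:TL}, \ref{sec:brauer} and \ref{sec:f11}: since $\overline{\phi}$ is already known to be surjective by Proposition \ref{cor:sur}, it suffices to prove the dimension bound $\dim \overline{\mathcal{R}}\bigl(\tfrac{15}{4},\tfrac{15}{4},\tfrac{15}{4}\bigr)\leq 34=\dim \mathcal{C}_{\frac{3}{2},\frac{3}{2},\frac{3}{2}}$. The surjectivity then forces this to be an equality and $\overline{\phi}$ becomes an isomorphism.

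To obtain the dimension bound, I would exploit that $G=C+\tfrac{1}{4}$ is central with characteristic polynomial $(G-1)(G-4)(G-9)(G-16)(G-25)=0$. This decomposes the quotiented Racah algebra as a direct sum of five subalgebras, one for each eigenvalue $g\in\{1,4,9,16,25\}$ of $G$. In each sector the defining relations simplify considerably: as noted before the theorem statement, the characteristic equations for $A$, for $B$, and for $A+B$ all collapse to a single polynomial obtained by intersecting the set of admissible eigenvalues $\{j(j+1):j\in\mathcal{J}_{12}\}$ with the translate $\{m-g+\tfrac{1}{4}:m\in \mathcal{M}_{123}\}$ coming from the central value of $G$, and analogously for $B$ and $A+B$. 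This is exactly the list of simplified presentations tabulated above the theorem.

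The next step is to bound the dimension of each sector. For $g=25$ the sector is evidently one-dimensional since $A$ and $B$ are forced to the scalar $12$. For $g=16$ and $g=1$ the quadratic characteristic equations on $A$ and $B$ together with the cubic (from \eqref{eq:UR1}--\eqref{eq:UR2}) expressions for $BAB$ and $ABA$ show that $\{1,A,B,AB\}$ is a spanning set; linear independence follows from examining the images under $\overline{\phi}$ on the relevant isotypic components of $[3]^{\otimes 3}$. Thus each of these sectors has dimension exactly $4$. For $g=9$ and $g=4$ one expects dimensions $9$ and $16$ respectively; here the characteristic equations for $A$ and $B$ are cubic and quartic, and a spanning set can be produced by systematically reducing words in $A$ and $B$ using the cubic commutator relations and the $A+B$ relation. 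Summing gives $1+4+9+4+16=34$, as required.

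The main technical obstacle will be the sectors $g=9$ and $g=4$, where the reduction procedure is considerably longer than in the previous sections and does not admit a short ad hoc argument; here the natural approach is the one successfully used for $\overline{\mathcal{R}}(2,2,2)$ in Section \ref{sec:brauer}, namely to build a plausible spanning set $\mathcal{S}$, enlarge it to a generating set by adjoining a few products, write matrices for the left regular actions of $A$ and $B$ in this enlarged set, and impose the defining relations to extract enough independent linear constraints to reduce back to $\mathcal{S}$. Linear independence of the images of $\mathcal{S}$ under $\overline{\phi}$ is then verified on the explicit representation of the Racah generators on $[3]^{\otimes 3}$, and the dimensions $9$ and $16$ are pinned down. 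Because the number of words and monomials involved is sizeable, this step is most cleanly carried out with computer algebra, as was done for the cases $j_1=j_2=j_3=1$ and $j_1=\tfrac{1}{2},j_2=j_3=1$.
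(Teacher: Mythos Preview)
Your proposal is correct and follows essentially the same approach as the paper: decompose $\overline{\mathcal{R}}\bigl(\tfrac{15}{4},\tfrac{15}{4},\tfrac{15}{4}\bigr)$ according to the five eigenvalues of $G=C+\tfrac14$, bound the dimension of each sector (trivially $1$ for $g=25$; spanning set $\{1,A,B,AB\}$ giving $4$ for $g=16$ and $g=1$; computer algebra yielding $9$ and $16$ for $g=9$ and $g=4$), and conclude from the resulting total $34$ together with the surjectivity of $\overline{\phi}$. The only minor redundancy is your separate verification of linear independence in the easy sectors via $\overline{\phi}$: once the total spanning-set count is $\leq 34$, surjectivity alone already forces equality everywhere.
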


\section{Quotiented Racah $\overline{\mathcal{R}}\left(j(j+1),\frac{3}{4},k(k+1)\right)$ 
and ${\mathcal{C}}_{j,\frac{1}{2},k}$\label{sec:j12k}}

We consider the situation $\{j_1,j_2,j_3\}=\{\frac{1}{2},j,k\}$ where $j$ and $k$ are any positive integers or half-integers. We assume that $(j,k)\neq(\frac{1}{2},\frac{1}{2})$ since this case was treated in full details earlier. From the $S_3$-invariance of the conjecture explained in Section \ref{sec:S3}, there is no loss in generality to take $\alpha_1=j(j+1)$, $\alpha_2=\frac{3}{4}$, $\alpha_3=k(k+1)$ and to assume that $j\geq k$. 

For simplicity of notations, we define the following new parameters:
\[x:=j+\frac{1}{2}\ \ \ \ \text{and}\ \ \ \ \ y:=k+\frac{1}{2}\ .\]
The characteristic equations for $A$ and $B$ in Conjecture \ref{conj:t} then become:
\begin{equation}\label{quadr1}
A^2=2x^2A-x^2(x^2-1)\ \ \ \ \ \ \ \text{and}\ \ \ \ \ \ \ B^2=2y^2B-y^2(y^2-1)\ .
\end{equation}
Let $c$ be a complex number. We denote by $H_{j,k,c}$ the quotient of the universal Racah algebra $\mathcal{R}(j(j+1),\frac{3}{4},k(k+1))$ by the relations \eqref{quadr1} together with $C=c$. In words, these relations replace the central element $C$ by the number $c$, and force $A$ and $B$ 
to be canceled by a polynomial of order 2 with the simple eigenvalues:
$$Sp(A)=\{(j-\frac{1}{2})(j+\frac{1}{2}),(j+\frac{1}{2})(j+\frac{3}{2})\}\ \ \ \ \text{and}\ \ \ \ Sp(B)=\{(k-\frac{1}{2})(k+\frac{1}{2}),(k+\frac{1}{2})(k+\frac{3}{2})\}\ .$$

The main step for proving the conjecture in this situation is the following proposition.
\begin{prop}\label{prop:dim4}
The algebra $H_{j,k,c}$ is of dimension smaller or equal to 4.
\end{prop}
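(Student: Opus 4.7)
The plan is as follows. I would first use the Racah relations \eqref{eq:UR1}--\eqref{eq:UR2} (with $C$ replaced by the scalar $c$) together with the quadratic relations \eqref{quadr1} to rewrite $ABA$ and $BAB$ in terms of $\{1,A,B,AB,BA\}$. Explicitly, expanding $[A,[B,A]] = 2ABA - A^2B - BA^2$ and using $A^2B+BA^2 = 2x^2\{A,B\} - 2x^2(x^2-1)B$ (which follows from \eqref{quadr1}), one obtains, with $\Sigma := \alpha_1+\alpha_2+\alpha_3 = x^2+y^2+\tfrac14$,
\begin{equation*}
ABA = (x^2-1)\{A,B\} - x^2(x^2-1)B + (c+\Sigma-2x^2)A + (x^2-1)(x^2+y^2-c-\tfrac14),
\end{equation*}
and a symmetric formula for $BAB$ (with $A\leftrightarrow B$, $x\leftrightarrow y$). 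Together with \eqref{quadr1}, this implies that every word in $A,B$ lies in the span of $\{1,A,B,AB,BA\}$, so $\dim H_{j,k,c}\leq 5$.

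To squeeze out one further linear relation, I would introduce the element
\begin{equation*}
W := \{A,B\} - 2y^2 A - 2x^2 B - \tilde\alpha,\qquad \tilde\alpha := c+\tfrac14 - x^2 - y^2 - 2x^2y^2,
\end{equation*}
and verify by direct expansion, using $A^2=2x^2A-x^2(x^2-1)$ and the formula above for $ABA$, the identity
\begin{equation*}
AW = (x^2-1)\,W.
\end{equation*}
The scalar $\tilde\alpha$ is chosen precisely so that the $A$-coefficient and the constant term of $AW$ align with those of $(x^2-1)W$; its form can be guessed by extrapolating from the identity $\{A,B\}=2A+2B-3$ obtained in Section \ref{sec:TL} for $j=k=\tfrac12$, $c=\tfrac34$.

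Third, the quadratic relation on $A$ yields the key factorisation
\begin{equation*}
(A-(x^2-1))(A-(x^2+1)) = x^2-1.
\end{equation*}
Since the hypotheses $j\geq k$ and $(j,k)\neq(\tfrac12,\tfrac12)$ force $j\geq 1$, we have $x=j+\tfrac12\geq\tfrac32$ and $x^2-1\neq 0$, so $A-(x^2-1)$ is invertible in $H_{j,k,c}$. Combined with $(A-(x^2-1))W=0$, this forces $W=0$, yielding
\begin{equation*}
BA = -AB + 2y^2 A + 2x^2 B + \tilde\alpha,
\end{equation*}
which reduces the spanning set to $\{1,A,B,AB\}$ and gives $\dim H_{j,k,c}\leq 4$.

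The main obstacle is the discovery of the element $W$ and the specific value of $\tilde\alpha$, which have to be tuned very precisely for the identity $AW=(x^2-1)W$ to hold (the $A$-coefficient and the constant term of $AW$ only collapse onto $(x^2-1)W$ for this choice). Once $W$ is identified, the calculation is tedious but routine, and the invertibility of $A-(x^2-1)$ is then immediate from the quadratic relation together with $j\geq 1$.
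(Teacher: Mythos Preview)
Your proof is correct and rests on the same key mechanism as the paper's: rewrite the Racah relation for $ABA$ using the quadratic relation for $A$, then exploit that $A-(x^2-1)$ is invertible (since $x\neq 1$) to force $BA\in\text{Span}\{1,A,B,AB\}$. The packaging differs slightly. You construct the explicit element $W$ and tune $\tilde\alpha$ so that $(A-(x^2-1))W=0$; the paper instead reads off directly from the $ABA$ formula that $(A-(x^2-1))BA\in\text{Span}\{1,A,B,AB\}$, observes that this span is stable under left multiplication by $A$, and notes that the inverse of $A-(x^2-1)$ lies in $\mathbb{C}[A]$, so multiplying by it keeps us inside the span. This sidesteps entirely what you flag as the main obstacle (guessing $\tilde\alpha$). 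On the other hand, your route yields the explicit relation $\{A,B\}=2y^2A+2x^2B+\tilde\alpha$, which the paper does not record.
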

\begin{proof}
Using the characteristic equations \eqref{quadr1} for $A$ and $B$, we rewrite the defining relations \eqref{eq:UR1} and \eqref{eq:UR2} and we obtain the following relations in $H_{j,k,c}$:
\begin{eqnarray}
&&ABA=(x^2-1)\{A,B\}-x^2(x^2-1)B+(y^2-x^2+c+\frac{1}{4})A+(x^2-1)(x^2+y^2-c-\frac{1}{4})\,,\label{eq:UR1mod}\\
&&BAB=(y^2-1)\{A,B\}-y^2(y^2-1)A+(x^2-y^2+c+\frac{1}{4})B+(y^2-1)(x^2+y^2-c-\frac{1}{4})\,. \label{eq:UR2mod}
 \end{eqnarray}
The first of these relations show that
\[(A-(x^2-1))BA\in\text{Span}\{1,A,B,AB\}\ .\]
The roots of the characteristic polynomials of $A$ are $x^2\pm x$. Moreover, since $(j,k)\neq(\frac{1}{2},\frac{1}{2})$ and $j\geq k$, we have that $x\neq 1$ and therefore that $A-(x^2-1)$ is invertible in $\mathbb{C}[A]$. Moreover, it is clear that the subspace $\text{Span}\{1,A,B,AB\}$ is stable under multiplication on the left by $A$, so we conclude that
\[BA\in\text{Span}\{1,A,B,AB\}\ .\]
Now it is immediate from this fact and the defining relations that $\text{Span}\{1,A,B,AB\}$ is stable under multiplication on the left by the generators $A$ and $B$. This shows that this subspace is a subalgebra of $H_{j,k,c}$, and as it contains moreover the two generators $A$ and $B$, it must be the whole algebra $H_{j,k,c}$. We conclude that $H_{j,k,c}$ has a spanning set with 4 elements thereby validating the proposition.
\end{proof}

We are now ready to prove the conjecture in this situation.
\begin{thm}
Conjecture \ref{conj:t} is true for $\{j_1,j_2,j_3\}=\{\frac{1}{2},j,k\}$.
\end{thm}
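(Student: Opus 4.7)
The plan is to exploit the fact that $C$ is central in $\overline{\mathcal{R}}$ and satisfies a polynomial with pairwise distinct roots, and to decompose both sides along the eigenvalues of $C$. First, using the third characteristic equation in \eqref{eq:quo1} together with the centrality of $C$, I write
\[
\overline{\mathcal{R}}=\bigoplus_{\ell\in\mathcal{J}_{123}}\overline{\mathcal{R}}\big/\big(C-\ell(\ell+1)\big)\,.
\]
By Wedderburn's theorem and \eqref{eq:J4}, the centralizer decomposes as $\mathcal{C}_{j,\frac{1}{2},k}\cong\bigoplus_{\ell\in\mathcal{J}_{123}}M_{d_\ell}(\mathbb{C})$, the $\ell$-component being the quotient by $K_{123}-\ell(\ell+1)$. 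Since $\overline{\phi}$ is surjective by Proposition \ref{cor:sur} and respects these decompositions, the conjecture will follow once I verify that $\dim\overline{\mathcal{R}}/(C-\ell(\ell+1))=d_\ell^2$ for every $\ell\in\mathcal{J}_{123}$.

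A direct inspection of the Bratteli diagram of $[2j]\otimes[1]\otimes[2k]$ shows that $d_\ell\in\{1,2\}$, because $[2j]\otimes[1]$ has only the two summands $[2j\pm 1]$ and the subsequent tensor products with $[2k]$ are multiplicity-free. For $\ell$ with $d_\ell=2$, Proposition \ref{prop:dim4} gives the upper bound $\dim\overline{\mathcal{R}}/(C-\ell(\ell+1))\leq\dim H_{j,k,\ell(\ell+1)}\leq 4$, while the surjectivity of $\overline{\phi}$ provides the reverse inequality $\geq d_\ell^2=4$, forcing equality.

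For $\ell$ with $d_\ell=1$, the task is to show that the quotient is one-dimensional, which I would do by forcing $A$ and $B$ to become scalars. After substituting $C=\ell(\ell+1)$, the quadratic relation from \eqref{eq:quo1} and the first relation in \eqref{eq:quo5} both annihilate $A$; the former has the two distinct roots $(j-\frac{1}{2})(j+\frac{1}{2})$ and $(j+\frac{1}{2})(j+\frac{3}{2})$, and the latter has the pairwise distinct roots $\{\ell(\ell+1)-m:m\in\mathcal{M}_{123}\}$ by the remark at the end of Subsection \ref{sec:irreps}. Unwinding the definition of $\mathcal{M}_{123}$ from the Bratteli diagram, the common roots of these two polynomials are exactly the $K_{12}$-eigenvalues that pair with $K_{123}=\ell(\ell+1)$; there being a unique such pairing when $d_\ell=1$, the minimal polynomial of $A$ has degree one and $A$ is a scalar. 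The same argument applied to $B$ using \eqref{eq:quo5} and $\mathcal{M}_{231}$ shows that $B$ is also a scalar, so the quotient is spanned by $1$ and has dimension $1=d_\ell^2$.

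Summing these equalities over $\ell\in\mathcal{J}_{123}$ yields $\dim\overline{\mathcal{R}}=\sum_\ell d_\ell^2=\dim\mathcal{C}_{j,\frac{1}{2},k}$ by \eqref{eq:dim}, so the already-surjective $\overline{\phi}$ is an isomorphism. The main obstacle I anticipate is the $d_\ell=1$ case: one must carefully identify the common roots of the two characteristic polynomials of $A$ (and of $B$) with the edges of the Bratteli diagram ending at $[2\ell]$ in order to conclude that $A$ and $B$ act as scalars.
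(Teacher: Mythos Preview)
Your proposal is correct and follows essentially the same route as the paper: decompose along the eigenvalues of $C$, invoke Proposition~\ref{prop:dim4} plus surjectivity for the $d_\ell=2$ blocks, and for the $d_\ell=1$ blocks force $A$ (and $B$) to be scalars by intersecting the characteristic polynomial from \eqref{eq:quo1} with the one from \eqref{eq:quo5}. The only minor divergence is that, once $A$ is shown to be a scalar, the paper deduces that $B$ is a scalar directly from the rewritten Racah relation \eqref{eq:UR1mod} rather than repeating the common-roots argument with $\mathcal{M}_{231}$; your anticipated obstacle (verifying that the common roots are exactly the $K_{12}$-eigenvalues connected to $[2\ell]$) is precisely what the paper checks explicitly for the two extremal values $\ell=j+k+\tfrac{1}{2}$ and $\ell=j-k-\tfrac{1}{2}$.
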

\begin{proof}
As explained in the beginning of this section, we can assume that $j_1=j\geq k=j_3$, $j_2=\frac{1}{2}$ and moreover $j>\frac{1}{2}$. The Bratteli diagram in this situation gives that here
\[\mathcal{J}_{123}\subseteq\{j-k-\frac{1}{2},j-k+\frac{1}{2},\dots,\dots,j+k-\frac{1}{2},j+k+\frac{1}{2}\}\ ,\]
this being an equality if $j>k$, while the negative value $j-k-\frac{1}{2}$ must be removed if $j=k$. Moreover, the multiplicities of the representations in the third line are equal to 2 except for $j-k-\frac{1}{2}$ (if present) or $j+k+\frac{1}{2}$ for which it is 1.

From the characteristic equations for $C$ imposed in $\overline{\mathcal{R}}\left(j(j+1),\frac{3}{4},k(k+1)\right)$, we have:
\[\overline{\mathcal{R}}\bigl(j(j+1),\frac{3}{4},k(k+1)\bigr)=\bigoplus_{x\in\mathcal{J}_{123}}\overline{\mathcal{R}}\bigl(j(j+1),\frac{3}{4},k(k+1)\bigr)_x\ ,\]
where $\overline{\mathcal{R}}\left(j(j+1),\frac{3}{4},k(k+1)\right)_x$ denotes the quotiented Racah algebra where $C$ is replaced by $x(x+1)$ (the quotient by $C=x(x+1)$).

We are going to show that, for $x\in\mathcal{J}_{123}$, we have
\[\dim\Bigl(\overline{\mathcal{R}}\bigl(j(j+1),\frac{3}{4},k(k+1)\bigr)_x\Bigr)\leq\left\{\begin{array}{cc}
4 & \text{if $x\notin\{j-k-\frac{1}{2},j+k+\frac{1}{2}\}$,}\\
1 & \text{otherwise.}
\end{array}\right.\]
This implies at once that the dimension of $\overline{\mathcal{R}}\bigl(j(j+1),\frac{3}{4},k(k+1)\bigr)$ is smaller or equal to the dimension of the 
centralizer ${\mathcal{C}}_{j,\frac{1}{2},k}$ and so, from the surjectivity of the map proved in Proposition \ref{cor:sur}, this is enough to prove the conjecture. 

\medskip
The algebra $\overline{\mathcal{R}}\bigl(j(j+1),\frac{3}{4},k(k+1)\bigr)_x$ is a quotient of the algebra $H_{j,k,x(x+1)}$ studied earlier (the quotient by the 
remaining relations \eqref{eq:quo2}--\eqref{eq:quo5} in Conjecture \ref{conj:t}). So we have immediately from Proposition \ref{prop:dim4} 
that its dimension is smaller or equal to 4. Only the situation $x\in\{j-k-\frac{1}{2},j+k+\frac{1}{2}\}$ remains to be treated.

From the Bratteli diagram, we see that the characteristic equation for $C-A$ in \eqref{eq:quo5} implies that:
\[\prod_{l=j-k-\frac{1}{2}}^{j+k-\frac{1}{2}}\bigl(C-A-l(l+1)+(j-\frac{1}{2})(j+\frac{1}{2})\bigr)
\prod_{l=j-k+\frac{1}{2}}^{j+k+\frac{1}{2}}\bigl(C-A-l(l+1)+(j+\frac{1}{2})(j+\frac{3}{2})\bigr)=0\,.\]

Now, let $x=j+k+\frac{1}{2}$, so that we replace $C$ by $(j+k+\frac{1}{2})(j+k+\frac{3}{2})$. It is easy to see from the equation just 
above that the eigenvalue $(j-\frac{1}{2})(j+\frac{1}{2})$ for $A$ is excluded.

Similarly, let $x=j-k-\frac{1}{2}$, so that we replace $C$ by $(j-k-\frac{1}{2})(j-k+\frac{1}{2})$. It is easy to see that the eigenvalue $(j+\frac{1}{2})(j+\frac{3}{2})$ for $A$ is excluded.

In both cases we conclude that in $\overline{\mathcal{R}}\bigl(j(j+1),\frac{3}{4},k(k+1)\bigr)_x$, the generator $A$ is a number. Then if $A$ is equal to the number $a$, collecting the terms 
with $B$ in relation \eqref{eq:UR1mod}, we have that 
\[(a^2-2(x^2-1)a+x^2(x^2-1))B\]
must also be a number. Using that $a$ is a root of the characteristic polynomial of $A$, this gives that $2aB$ is a number. As $a$ is not zero, this gives that $B$ is also equal to a number.

Finally, in both cases, we have that the relations force $A$ and $B$ to be equal to numbers, so that the dimension must be less or equal to 1.
\end{proof}

\begin{rem}
The above proof shows that both relations \eqref{eq:quo2}--\eqref{eq:quo3} can be removed from the definition of the quotient $\overline{\mathcal{R}}\bigl(j(j+1),\frac{3}{4},k(k+1)\bigr)$. 
Indeed they were not used at all. More obviously, one of the two relations in \eqref{eq:quo5} can also be removed. Here we remind the reader that we exclude the situation $(j,k)=(\frac{1}{2},\frac{1}{2})$ 
which was treated earlier. If $j=k=\frac{1}{2}$ this remark does not hold anymore.
\end{rem}

\begin{rem}[The situation $j=k$]$\ $

$\bullet$ In this remark, we assume that $j=k$. We still denote $x=j+\frac{1}{2}$ and we set also
\[z:=\sqrt{x^2-(c+\frac{1}{4})}\ .\]
We make the following change of generators in $H_{j,j,c}$:
\[\mathcal{A}=A+z-x^2\ \ \ \ \ \ \ \text{and}\ \ \ \ \ \ \ \ \mathcal{B}=B+z-x^2\ .\]
Then one can check with straightforward calculations that the defining relations of $H_{j,j,c}$ in terms of $\cA$ and $\cB$ can be written:
\begin{eqnarray}
  && \cA^2=2z\cA+x^2-z^2\quad ,\qquad  \cB^2=2z\cB+x^2-z^2\ ,\\
  &&\cA\cB\cA=\cB\cA\cB\ ,\\
 && \cA\cB\cA=(z-1)(\cA\cB+\cB\cA)+(-z^2+2z-x^2)(\cA+\cB)+z^3-3z^2+3x^2z-x^2\ .
\end{eqnarray}
In particular, $\cA$ and $\cB$ satisfy the braid relations so that the algebra $H_{j,j,c}$ becomes a quotient of algebra of the braid group (on 3 strands). 
Since $\cA$ and $\cB$ satisfy a quadratic relation, this quotient factors through the Hecke algebra. It is interesting to note that the situation $c=(2j+\frac{1}{2})(2j+\frac{3}{2})$ 
(which corresponds to the more delicate situation of a multiplicity equal to 1 in the proof above) corresponds in fact to a non semisimple regime for the Hecke algebra.

$\bullet$ Let $H_{j}$ be the quotient of the universal Racah algebra $\mathcal{R}(j(j+1),\frac{3}{4},j(j+1))$ by the quadratic relations \eqref{quadr1} for $A$ and $B$ (here $y=x$). In other words, we put back the central element $C$ in $H_{j,j,c}$. Then the change of variables in the preceding item shows that we can find elements $\cA$ and $\cB$ satisfying the braid relations. It would be interesting to study the quotient of the algebra of the braid group that one obtains this way (note that since $C$ appears in the change of variables, the new elements $\cA$ and $\cB$ no longer satisfy a quadratic characteristic equation).
\end{rem}

\section{Conclusion and perspectives}

In this paper, we have proposed a conjecture regarding the relation between  quotients of the Racah algebra and the centralizers
$\text{End}_{\mathfrak{su}(2)}( [2j_1] \otimes [2j_2] \otimes [2j_3] )$ for any choice of three finite irreducible representations of $\mathfrak{su}(2)$.
It provides a description in terms of generators and relations of each of these centralizers. The conjecture has been proven in different cases: previously known results 
have been recovered and descriptions have been found for new cases. It would obviously be desirable to provide a proof of this conjecture in general.
We believe that it should always be possible to simplify
the presentation of the quotiented Racah algebra described in Conjecture \ref{conj:t} by removing the central element $C$.
 %can be always removed 
%such that the presentation of the algebra simplify. 
There may also exists a diagrammatic presentation of the defining relations of these quotients as in the case of
the Temperley--Lieb or Brauer algebras.

%There are at least 
Three directions for generalizing the conjecture that can be envisaged. The first consists in increasing the number of tensor product and in 
considering the $N$ fold tensor product of 
$\mathfrak{su}(2)$. In this case the Racah algebra is replaced by the higher rank algebra introduced in \cite{DV}. 
The quotient which gives the centralizer may be associated 
to a Bratteli diagram with $N$ rows describing the direct sum decomposition of $N$ fold tensor product.
For the particular cases of $N$ fundamental representations, we must find that the quotient is isomorphic to the Temperley--Lieb algebra $TL_N(1)$
or for $N$ spin-1 representations to the Brauer algebra $B_N(3)$. The results for the three fold tensor product should be the building blocks to
obtain the presentation of the centralizer for the $N$ fold tensor product. In particular, our new results obtained in Sections \ref{sec:sff} and \ref{sec:f11}
should be useful to describe the centralizer for the $N$ fold tensor product with some spin-$\frac{1}{2}$ representations and some spin-$1$ representations.

The second generalization is to consider algebras other than $\mathfrak{su}(2)$. In the case of the super-algebra $\mathfrak{osp}(1|2)$, the
Bannai--Ito algebra plays the role of the Racah algebra and for three fundamental representation a quotient is isomorphic to the Brauer algebra \cite{CFV}.
Let us remark that 
%for the case of the Lie algebra $\mathfrak{su}(n)$ ($n\geq 3$), 
the generalizations of the Racah algebra are not known say
%and 
for the Lie algebras $\mathfrak{su}(n)$  with $n\geq 3$. 
Their study would be a pre-requisite to the characterization following the lines of the present paper of the centralizers of tensor product of irreducible representations.
%must be the starting point for our construction of the centralizer. 
The construction of 
%the equivalent of 
the Racah algebra 
associated to $\mathfrak{su}(n)$ should also prove important in the study of orthogonal polynomials.

The third generalization is to examine the $q$-deformation of our results. Indeed, we could consider the quantum group $U_q(\mathfrak{su}(2))$
instead of the Lie algebra $\mathfrak{su}(2)$. The Racah algebra would be replaced by the Askey--Wilson algebra \cite{Z91}.
For three fundamental representations, a quotient of the Askey--Wilson algebra must be isomorphic to the Temperley--Lieb algebra for $q\neq 1$
and for three spin-$1$, it must be isomorphic to the Birman--Murakami--Wenzl algebra \cite{BMW} according to the results in \cite{LZ,LZ2}.
% concerning the centralizer. 
 We trust that our result of Section \ref{sec:sff} can also be $q$-deformed: in fact
the one-boundary Temperley--Lieb algebra \cite{MS, MW,NRG} (see equations \eqref{eq:1btl}) is the centralizer 
for one spin-$j$ representation and two spin-$\frac{1}{2}$ representations of $U_q(\mathfrak{su}(2))$.

Finally, it is possible to think of generalizations mixing some of the extension avenues presented above. For instance,
the higher rank Askey-Wilson algebra introduced in \cite{PW} could be the starting point to deal with the $N$ fold tensor product of 
representations of $U_q(\mathfrak{su}(2))$ and the higher rank Bannai--Ito \cite{DGV} with the $N$ fold tensor product of 
representations of $\mathfrak{osp}(1|2)$.\\[1cm]

\textbf{Acknowledgements.} N.Cramp\'e and L.Poulain d'Andecy are partially supported by Agence National de la Recherche Projet AHA ANR-18-CE40-0001.
 N.Cramp\'e is gratefully holding a CRM--Simons professorship. L.Poulain d'Andecy warmly thanks the Centre de Recherches Math\'ematiques (CRM) for 
 hospitality and support during his visit to Montreal in the course of this investigation
 The research of L.Vinet is supported in part by a Discovery Grant from the Natural Science and Engineering Research Council (NSERC) of Canada.

\end{document}